\numberwithin{equation}{section}
\theoremstyle{plain}
\newtheorem{theorem}{Theorem}[section]
\newtheorem{lemma}[theorem]{Lemma}
\newtheorem{proposition}[theorem]{Proposition}
\newtheorem{remark}[theorem]{Remark}
\newtheorem{definition}[theorem]{Definition}
\definecolor{myblue}{RGB}{20,60,160}
\definecolor{myred}{RGB}{200,16,40}
\title{Limits of biconditioned Bienaym\'e--Galton--Watson trees}
\begin{document}
\author{Vanessa Dan\footnote{CMAP, École polytechnique, Institut Polytechnique de Paris, 91120 Palaiseau, France. \\ DMA, École Normale Supérieur, Université PSL, 75005 Paris, France. \\ vanessa.dan@polytechnique.edu.}}
\maketitle

\begin{abstract}
 We study the limiting behavior of a Bienaymé–Galton–Watson tree conditioned to have a large number of vertices and either a fixed number of leaves or a fixed number of internal nodes. The first biconditioning gives a universal result with respect to the offspring distribution. In contrast, the second case leads to a variety of limiting behaviors, ranging from condensation phenomena to more elongated tree structures, depending on the properties of the offspring distribution. To prove these results, we use tools from conditioned random walk theory and from analytic combinatorics. 
\end{abstract}
\tableofcontents
\section{Introduction}

In this article, we are interested in the asymptotic behavior of  Bienaymé–Galton–Watson trees (BGW trees) conditioned on having a large number of vertices with a fixed number of leaves or internal nodes. 

Scaling limits of BGW trees were initiated by Aldous \cite{A91,A92,A93} who proved that the rescaled critical BGW tree with finite-variance offspring distribution, conditioned on having a large total number of vertices, converges to a continuous random tree known as the Brownian Continuum Random Tree (CRT). The theoretical framework around this notion of convergence has since been well established (see the survey article \cite{LG05}). This convergence result has been extended to more general settings. In particular, Duquesne \cite{D03} (see also \cite{K13}) studied the case where the offspring distribution has infinite variance, and showed that the scaling limit of a critical Bienaymé–Galton–Watson tree whose offspring distribution lies in the domain of attraction of a stable law is a continuous random tree, known as the stable tree. It is worth noting that the case where the mean number of children is strictly greater than 1 is similar and leads to the same results.
However, if the mean is strictly less than 1, the geometry of a tree conditioned to be large can be very different and can lead to a condensation phenomenon \cite{K15}. Often motivated by applications to other random combinatorial structures, many other types of conditioning have also been considered, such as maximum height \cite{LG10}, maximum degree \cite{H17}, generation sizes \cite{ABD20}, or the total number of vertices with fixed degrees \cite{Kort13,R15,T20}.

While the asymptotic behavior of conditioned BGW trees is now well understood, many biconditioned cases (i.e. conditioning on both the total number of vertices and the number of leaves), remain largely unexplored. Nonetheless, several works have addressed specific biconditioned models, beginning with Labarbe and Marckert who studied the case of the uniform distribution, in which both the number of leaves $k_n$ and the number of internal nodes $n-k_n$ tend to infinity \cite{LM07}. They showed that, although the normalization factor must be adjusted depending on the number of imposed leaves, the scaling limit remains the same as in the single-conditioning case: the Brownian CRT. Then, Kargin considered BGW trees conditioned on both the total number of vertices $n$ and the number of leaves $k_n$, focusing on the regime where both $k_n$ and $n$ grow to infinity and $k_n=\alpha n+O(1)$, with $\alpha \in (0,1)$ \cite{Kargin23}. Under the assumption of exponential decay of the offspring distribution, he proved that the rescaled tree once again converges in distribution to the Brownian CRT. Finally, motivated by questions related to random maps, Kortchemski and Marzouk have recently obtained results \cite{KM23} on the scaling limits of the Łukasiewicz walk of such trees. The results in \cite{KM23} suggest that new scaling limits should emerge. 

In this article, we contribute to this line of research by studying the scaling limits of BGW trees under several new biconditioning regimes. We specifically focus on the case of biconditioning on the total number of vertices and either the number of leaves (Section \ref{sec3}) or the number of internal nodes (Sections \ref{sec5}$-$\ref{sec8}) is a fixed integer $k$ while the total number $n$ of vertices tends to infinity. 

\paragraph{Fixed number of leaves.} First, to ensure that the trees we consider are well-defined, we assume that the offspring distribution $\mu$ satisfies $\mu(1)>0$.
To study the biconditioned $\mu$-BGW tree,  we decompose the structure of a tree with $n$ vertices and $k$ leaves into two components: a reduced tree and a sequence of single-child ancestors. The reduced tree of $a$, denoted by $R(a)$, is obtained by removing all vertices in $a$ that have exactly one child. For each vertex $u$ in $R(a)$, we count the number of consecutive single-child vertices in $a$ that lie between $u$ and its parent in $R(a)$, or between $u$ and the root of $a$ if $u$ is the root of $R(a)$. We define the sequence of single-child ancestors of $a$, denoted by
$L(a)$, the sequence of these numbers when $R(a)$ is visited in lexicographic order. These two definitions are illustrated in Figure \ref{Fig1.2} below.

\begin{figure}[h!]
\centering
        \includegraphics[scale=1]{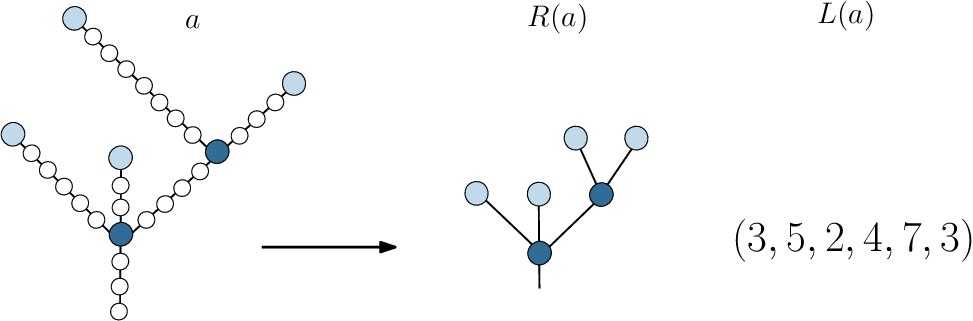}
        \caption{An example of a tree $a$ with its reduced tree $R(a)$ and its sequence of single-child ancestors $L(a)$.}
\label{Fig1.2}
\end{figure}
We will see in Section \ref{sec3} that $R(a)$ and $L(a)$ completely characterize a tree $a$ with $n$ vertices and $k$ leaves. Thus, studying the limit behavior of a biconditioned $\mu$-BGW tree is equivalent to studying the joint limit behavior of its reduced tree and its sequence of single-child ancestors.
\begin{theorem}\label{thcvg_feuilles-cas-binaire} Let $\mu$ be an offspring distribution with $\mu(1)>0$ and $\mu(2)>0$. Let $k\geq 1$ and $T_n^k$ a $\mu$-BGW tree conditioned to have $n$ vertices and $k$ leaves. Then, we have the following convergence in distribution:
\[\Bigl(R(T_n^k),\frac{L(T_n^k)}{n} \Bigl) \xrightarrow[n \rightarrow +\infty]{(d)} \bigl(R^k, \Delta \bigr)\]
where $R^k$ is a uniform random binary tree with $k$ leaves and $\Delta$ is a Dirichlet random variable with parameter $(1,\ldots,1)$ independent of $R^k$.
\end{theorem}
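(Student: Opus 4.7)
The plan is to exploit the bijective decomposition $a\leftrightarrow(R(a),L(a))$ to write the joint law of $(R(T_n^k),L(T_n^k))$ in closed form and then extract its asymptotic behaviour. The first step is to observe that for any plane tree $a$ with $n$ vertices and $k$ leaves, the BGW weight factorizes as
\[\mathbb{P}(T_n^k=a)=\frac{1}{Z_{n,k}}\,\mu(0)^k\,\mu(1)^{n-k-r(a)}\!\prod_{u\in R(a),\,u\text{ internal}}\!\mu(c_u(a)),\]
where $r(a)=|R(a)|-k$ is the number of internal vertices of $R(a)$, $c_u(a)$ is the number of children of $u$, and $Z_{n,k}$ is a normalizing constant. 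Crucially this weight depends on $a$ only through $R(a)$. Since every sequence of $k+r$ nonnegative integers summing to $n-k-r$ arises as $L(a)$ for exactly one $a$ with prescribed $R(a)=R$, it follows at once that the joint law $\mathbb{P}(R(T_n^k)=R,\,L(T_n^k)=L)$ is constant in $L$ on the admissible set. Summing on $L$ (for which there are $\binom{n-1}{k+r-1}$ compositions) yields the marginal of $R(T_n^k)$, and conditionally on $R(T_n^k)=R$, $L(T_n^k)$ is uniform on compositions of $n-k-r$ into $k+r$ nonnegative parts.

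I would then identify the limit of the marginal. Recall the elementary fact that a tree whose internal vertices all have at least two children and with $k$ leaves has exactly $r$ internal vertices, with $1\le r\le k-1$, the upper bound being attained precisely when the tree is binary. For any fixed binary $R_0$ with $k$ leaves, one computes
\[\frac{\mathbb{P}(R(T_n^k)=R)}{\mathbb{P}(R(T_n^k)=R_0)}=\mu(1)^{(k-1)-r}\,\frac{\binom{n-1}{k+r-1}}{\binom{n-1}{2k-2}}\,\frac{\prod_{u\text{ int.}}\mu(c_u)}{\mu(2)^{k-1}}=O\!\bigl(n^{\,r-(k-1)}\bigr),\]
which vanishes as soon as $r<k-1$. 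The set of admissible $R$ being finite (at most $2k-1$ vertices), the marginal of $R(T_n^k)$ therefore concentrates on binary plane trees with $k$ leaves, and since the remaining weight is the same for all such $R$, the limit marginal is the uniform distribution on binary plane trees with $k$ leaves, i.e.\ the law of $R^k$.

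For the joint convergence, I would then handle the conditional law of $L(T_n^k)$. Given $R(T_n^k)$ binary, $L(T_n^k)$ is uniform on compositions of $n-(2k-1)$ into $2k-1$ nonnegative parts; the classical fact that such a composition, rescaled by $n$, converges in distribution to Dirichlet$(1,\ldots,1)$ on the $(2k-2)$-simplex (for instance via the representation of the composition as spacings of $2k-2$ order statistics of uniform draws from $\{0,\ldots,n\}$) gives the conditional convergence of $L(T_n^k)/n$ to $\Delta$. Because this conditional limit depends only on the size of the conditioning tree, which is deterministically $2k-1$ in the limit, the limit $\Delta$ is automatically independent of $R^k$, and the joint convergence $(R(T_n^k),L(T_n^k)/n)\to(R^k,\Delta)$ follows. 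I expect the heart of the argument to be the ratio estimate in the second paragraph; it is elementary because $k$ is fixed, but one should be careful to bound contributions uniformly across the finite family of admissible non-binary reduced trees. The Dirichlet convergence itself is standard and requires only a short justification.
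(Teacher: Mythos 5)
Your argument is correct, and it reaches the conclusion by a genuinely lighter route than the paper at the key step. Both you and the paper start from the same bijective decomposition $a\leftrightarrow(R(a),L(a))$, observe that the BGW weight of $a$ depends only on $R(a)$ (so that $L(T_n^k)$ is conditionally uniform on compositions), and finish with the standard spacings argument giving the Dirichlet limit and the asymptotic independence. The difference lies in how concentration on binary reduced trees is established: the paper computes the asymptotics of the normalizing constant $\mathbb{P}(T\in\mathbb{T}^k_n)$ explicitly, encoding the tree by its \L ukasiewicz path, applying the cycle lemma, and extracting the dominant term of a constrained multinomial sum, and then matches this against the numerator $\binom{n-1}{|a|-1}\mu(0)^k\mu(1)^{n-|a|}\prod_i\mu(i)^{\phi_i(a)}$. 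You bypass the normalization entirely: since the set of possible reduced trees is finite (at most $2k-1$ vertices) and each one has an explicit weight $\binom{n-1}{k+r-1}\mu(1)^{n-k-r}\prod_u\mu(c_u)$ up to a common factor, the ratio estimate $O(n^{r-(k-1)})$ against a fixed binary tree suffices, and uniformity among binary trees is immediate because they all carry the same weight. This makes the "maximize the number of internal vertices" mechanism, which the paper realizes through the random-walk computation, completely elementary, at the price of not producing the asymptotics of $\mathbb{P}(T\in\mathbb{T}^k_n)$ itself (a byproduct of the paper's method). Your route also extends verbatim to the paper's more general Theorem on $\mathcal{K}(\mu)$, where concentration occurs on the trees maximizing the number of internal vertices. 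Two cosmetic points: the bound $1\le r\le k-1$ should read $r=0$ when $k=1$ (that case is degenerate and trivial anyway), and you implicitly use that $c_u(R(a))=c_u(a)$ for every vertex of $R(a)$ and that admissible reduced trees with some $\mu(c_u)=0$ simply have zero weight; both are immediate but worth a sentence.
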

See Figure \ref{Fig0.1} for an illustration. 
Note that if $\mu(2)=0$, Theorem \ref{thcvg_feuilles-cas-binaire} cannot hold as this would imply that $\mathbb{P}(R^k_{n} = b) = 0$ for all binary trees $b$ with $k$ leaves . In that case, let $i_0 \geq 3$ be the smallest integer such that $\mu(i_0)>0$. It is worth noting that if $k-1$ is a multiple of $i_0-1$, we get that $R_{n}^k$ converges in distribution as $n$ tends to infinity to a uniform random $i_0$-ary tree with $k$ leaves. 
The main result of Section \ref{sec3}, Theorem \ref{R_nk_behavior}, extends Theorem \ref{thcvg_feuilles-cas-binaire} to any integer $k$.
\begin{figure}[h!]
\centering
        \includegraphics[scale=1]{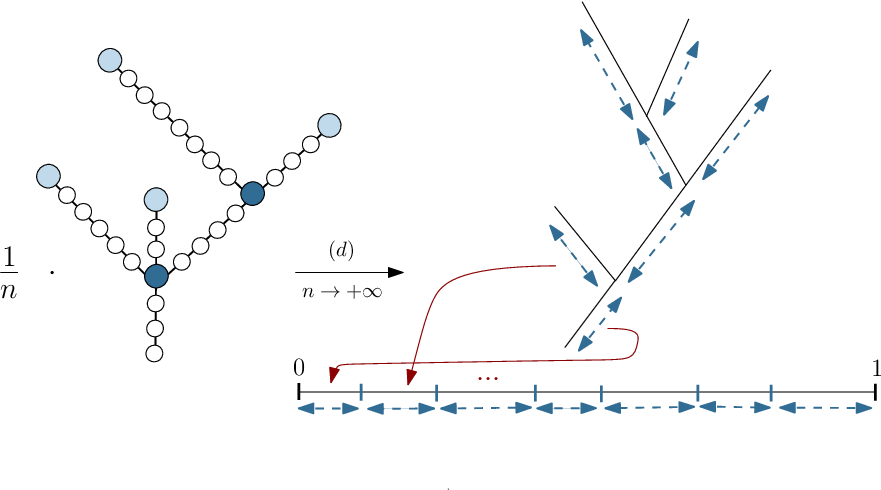}
        \caption{Illustration of the limit behavior of a $\mu$-BGW with $n$ nodes and $k$ leaves.}
\label{Fig0.1}
\end{figure}

\paragraph{Fixed number of internal nodes.} Unlike the first biconditioning, the limit here is not universal and depends sensibly on the properties of the offspring distribution $\mu$. As in the previous biconditioning, we begin by decomposing trees with $n$ vertices and $k$ internal nodes into two components: a reduced tree and a list of leaves. The reduced tree $R(a)$ of $a$ is the tree obtained by removing all leaves from $a$ and its sequence of leaves $L(a) \coloneqq(L_1(a),\ldots,L_{2k-1}(a))$  is the sequence of length $2k-1$ such that for every $i$ in $[\![1,2k-1]\!]$, the $i$-th element of the sequence equals the number of leaves of $a$ grafted in the $i$-th corner of $R(a)$ if the corner surrounds an internal node of $R(a)$ or that number minus one if the corner surrounds a leaf of $R(a)$. See Figure \ref{Fig0.2} for an illustration: Red vertices correspond to leaves in $R(a)$; in $L(a)$ they correspond to the number of leaves grafted on it in $a$ minus $1$.
\begin{figure}[h!]
\centering
        \includegraphics[scale=1]{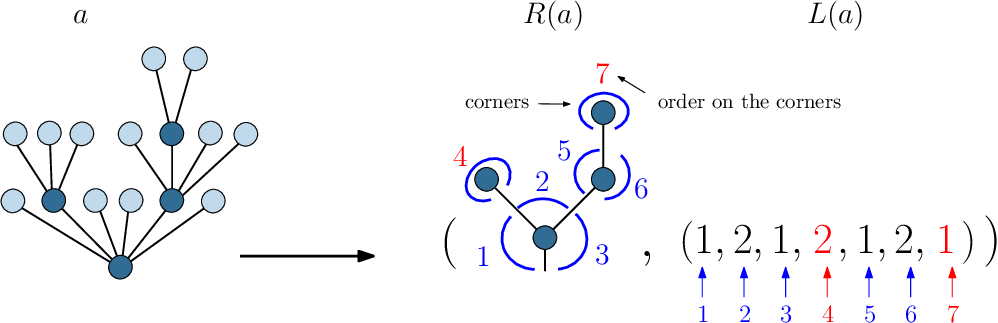}
        \caption{An example of a tree $a$ with its reduced tree $R(a)$ and its sequence of leaves $L(a)$.}
\label{Fig0.2}
\end{figure}

We denote by $T_{n,k}$ a $\mu$-BGW tree conditioned to have $n$ vertices and $k$ internal nodes.
We now describe the different cases in turn, according to the behavior of $\mu$.

The first case we consider is when the offspring distribution $\mu$ satisfies \eqref{hloc}: 
\begin{equation}\label{hloc_intro}
    ``\text{There exists } \ell \in \mathcal{R}_0 \text{ and } \beta > 1 \text{ such that for all } i \geq 0, \, \mu(i)=\ell(i)/i^{1+\beta}"
    \tag{$\mathcal{H}_{loc}$}
\end{equation} 
where $\mathcal{R}_0$ is the set of slowly varying functions (see Definition \ref{defslowlyvarfunc}). In this regime, a condensation phenomenon occurs around the root, meaning that most of the leaves are directly attached to it. We denote by $\ast_k$ the tree having $k-1$ leaves attached to a root.
\begin{theorem}\label{th1-intro}
     Under the assumption \eqref{hloc}, for every fixed $k\geq 1$ we have
\[d_{TV}\bigl(T_{n,k},D_{n,k} \bigr)\xrightarrow[n \rightarrow +\infty]{} 0,\]
where $d_{TV}$ denotes the total variation distance and $D_{n,k}$ is the random tree with $n$ vertices and $k$ internal nodes defined as follows: We first draw $k-1$ i.i.d. random variables $Z_1,\ldots,Z_{k-1}$ such that $\mathbb{P}(Z_1=j)=\mu(j)/(1-\mu(0))$ for all $j\geq 1$. We denote by $G_{n,k}$ the event `` $Z_1+\cdots+Z_{k-1} \leq n-k$''. On the event $G_{n,k}$, we let $D_{n,k}$ be the tree whose reduced tree is $\ast_k$ and whose sequence of leaves $L(D_{n,k})= (L_1(D_{n,k}),\ldots, L_{2k-1}(D_{n,k}))$ satisfies:
\begin{itemize}
         \item For $i \in [\![k+1,2k-1]\!]$, $L_{i}(D_{n,k})=Z_{i-k}$
         \item Conditionally given $(L_{k+1}(D_{n,k}),\ldots,L_{2k-1}(D_{n,k}))$, the vector $(L_1(D_{n,k}),\ldots, L_k(D_{n,k}))$ is a composition of $n-k-(\sum_{i=k+1}^{2k-1} L_i(D_{n,k}))$ into $k$ parts sampled uniformly at random.
\end{itemize}
Otherwise, $D_{n,k}$ is set to be an arbitrary fixed tree with $n$ vertices and $k$ leaves.
\end{theorem}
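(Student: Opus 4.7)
The plan is to leverage the bijective encoding $a \leftrightarrow (R(a),L(a))$ between trees with $n$ vertices and $k$ internal nodes and pairs (reduced tree of size $k$, length-$(2k-1)$ leaf-count vector) to write down an explicit joint law for $(R(T_{n,k}),L(T_{n,k}))$ and then reduce the total variation estimate to a concrete comparison of densities. Setting $p_{n,k}:=\mathbb{P}_\mu(|T|=n,\ T \text{ has } k \text{ internal nodes})$ and letting $N_v(\lambda)$ denote, for any leaf-count vector $\lambda$, the sum of the components of $\lambda$ at the corners surrounding $v$, counting the children of each vertex of $a$ in terms of the pair $(r,\lambda)$ gives
\[
\mathbb{P}\bigl((R(T_{n,k}),L(T_{n,k}))=(r,\lambda)\bigr) = \frac{\mu(0)^{n-k}}{p_{n,k}} \prod_{\substack{v\in r\\ \text{internal}}} \mu\bigl(c_r(v)+N_v(\lambda)\bigr) \prod_{\substack{v\in r\\ \text{leaf}}} \mu(N_v(\lambda)).
\]
The result then splits into (i) $\mathbb{P}(R(T_{n,k})=\ast_k)\to 1$, and (ii) a TV comparison of the conditional law of $L(T_{n,k})$ given $R(T_{n,k})=\ast_k$ with the prescribed law of $L(D_{n,k})$.

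Both parts rest on a sharp estimate for $p_{n,k}$. Applying the cycle lemma to the Łukasiewicz walk,
\[
p_{n,k} = \frac{1}{n}\binom{n}{k}\mu(0)^{n-k}(1-\mu(0))^k\,\mathbb{P}(Y_1+\dots+Y_k=n-1),
\]
where $Y_1,\dots,Y_k$ are i.i.d.\ with law $\mu(\cdot)/(1-\mu(0))$ on $\{1,2,\dots\}$. Under \eqref{hloc} with $\beta>1$, the $Y_i$ are subexponential with finite mean and regularly varying tail of index $-\beta$, so the one-big-jump local limit theorem yields $\mathbb{P}(Y_1+\dots+Y_k=n-1)\sim k\mu(n)/(1-\mu(0))$, and therefore $p_{n,k}\sim \mu(0)^{n-k}(1-\mu(0))^{k-1}\mu(n) n^{k-1}/(k-1)!$. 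Plugging this back into the joint formula for $r=\ast_k$, where the root is the unique internal vertex and has $k$ corners, and summing over the $\binom{N+k-1}{k-1}$ compositions of the root's leaf count $N$ together with the $k-1$ independent leaf-of-$r$ components $\lambda_j\geq 1$ (each contributing a factor $\sum_{j\geq 1}\mu(j)=1-\mu(0)$), one recovers exactly the leading asymptotic of $p_{n,k}$, proving $\mathbb{P}(R(T_{n,k})=\ast_k)\to 1$. For any $r\neq \ast_k$, no internal vertex of $r$ has $k$ corners (equality would force $r=\ast_k$), so the combinatorial prefactor at the big-jump vertex is at most $O(n^{k-2})$ and $\mathbb{P}(R(T_{n,k})=r)=O(1/n)$; since there are only finitely many plane trees of size $k$, summation yields the concentration.

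On the event $R(T_{n,k})=\ast_k$, the joint formula specialises as follows: conditionally on $s:=\sum_{j=k+1}^{2k-1} L_j$, the components $(L_{k+1},\dots,L_{2k-1})$ of $L(T_{n,k})$ have joint density proportional to $\mu(n-1-s)\prod_{j>k}\mu(L_j)\mathbf{1}_{L_j\geq 1}$, while $(L_1,\dots,L_k)$ is uniform among compositions of $n-k-s$ into $k$ non-negative parts. Since the slowly varying function $\ell$ satisfies $\mu(n-1-s)/\mu(n)\to 1$ uniformly for $s\leq A$, the density of $L(T_{n,k})$ agrees with that of $L(D_{n,k})$ up to a factor $1+o(1)$ on the bulk event $\{s\leq A\}$. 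The tail $\{s>A\}$ has vanishing mass under both laws as $A\to\infty$: under $D_{n,k}$ because $\beta>1$ makes the sum $Z_1+\cdots+Z_{k-1}$ tight (finite mean), and under $T_{n,k}$ by the corresponding subexponential-tail estimate. A standard Scheffé-type $L^1$ argument then upgrades the pointwise convergence of densities to convergence in total variation, completing the proof.

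\textbf{Main obstacle.} The technical heart of the argument is the sharp one-big-jump local asymptotic $\mathbb{P}(Y_1+\dots+Y_k=n-1-s)\sim k\mu(n)/(1-\mu(0))$, together with its uniformity in $s$ on compact (or slowly growing) sets. This is classical for subexponential regularly varying distributions with finite mean, but obtaining the uniformity needed to upgrade pointwise density convergence to total variation convergence requires a careful application of Potter-type bounds on the slowly varying function $\ell$. Once these local estimates are in hand, the remainder of the argument is essentially bookkeeping with the bijective encoding.
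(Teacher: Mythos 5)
Your overall strategy is viable and, at its core, rests on the same probabilistic engine as the paper: the cycle-lemma identity for $p_{n,k}$ (this is exactly the computation inside Proposition \ref{prop2.3}) and a one-big-jump statement for the conditioned sum of the $k$ non-leaf degrees. Where you diverge is in the organization: you aim for a sharp local asymptotic $\mathbb{P}(Y_1+\cdots+Y_k=n-1)\sim k\mu(n)/(1-\mu(0))$ and then compare densities directly via a Scheff\'e argument, whereas the paper never needs the sharp asymptotic of $p_{n,k}$: it transfers to the conditioned i.i.d.\ degree vector (Proposition \ref{prop2.3}), imports the total-variation convergence of the all-but-maximal order statistics from Equation (2.9) of \cite{AL11} (Proposition \ref{prop2.5}), shows by a conditional combinatorial comparison that the big degree sits at the root and $R_{n,k}=\ast_k$ (Proposition \ref{prop2.6}), and finishes with exchangeability of the non-root degrees. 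Your route is legitimate and arguably more self-contained, but it pushes all the difficulty into the uniform local big-jump estimates that you only assert (and flag as the ``main obstacle''), while the paper black-boxes precisely that input into the citation of \cite{AL11}.

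There is, however, one step whose written justification would fail as stated: the claim that for $r\neq\ast_k$ one has $\mathbb{P}(R(T_{n,k})=r)=O(1/n)$ because ``the combinatorial prefactor at the big-jump vertex is at most $O(n^{k-2})$''. This implicitly treats the contributions of the remaining vertices as $O(1)$, i.e.\ it uses finiteness of moments $\sum_a a^{c_r(w)}\mu(a)$ up to order $k-2$; under \eqref{hloc} only moments of order $<\beta$ are finite, and $\beta$ may well be smaller than $k-2$, so these sums diverge and must be replaced by truncated moments of size roughly $n^{c_r(w)-\beta}\ell(n)$. The conclusion is still true (each divergent factor costs an extra $n^{-\beta}\ell(n)$ relative to placing the big jump at that vertex, so the total for $r\neq\ast_k$ remains $o\bigl(n^{k-1}\mu(n)\bigr)$), but you need either this truncated-moment accounting or, more simply, the paper's device: first restrict to the event that all degrees except the largest are bounded by some $M$ (which is what Proposition \ref{prop2.5} delivers via \cite{AL11}), and only then compare the binomial prefactors, as in Proposition \ref{prop2.6}. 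The same remark applies to your tail bound on $s=\sum_{j>k}L_j$ under $T_{n,k}$: it is exactly a uniform one-big-jump estimate and should be derived (or cited), not asserted, since the whole total-variation conclusion hinges on it.
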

We give in Figure \ref{Fig0.3} a representation of $D_{n,k}$ on the event $G_{n,k}$.
\begin{figure}[h]
    \centering
    \includegraphics[scale = 1]{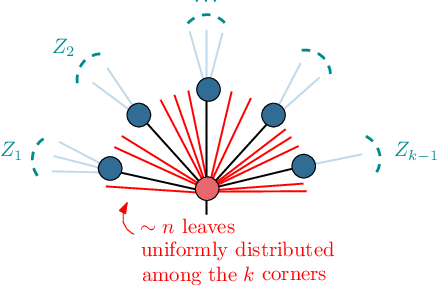}
    \caption{Illustration of $D_{n,k}$ on the event $G_{n,k}$.}
    \label{Fig0.3}
\end{figure}

We then shift our focus from a local setting to the following tail regime. Let $\mu$ be a probability distribution on $\mathbb{Z}_{\geq 0}$ with mean equal to $1+m$ and let $F_{\mu}$ be its generating function, defined by  \begin{equation} \label{defF}
 F_{\mu}(z) = z -mz +mz^2 + \ell\Bigl(\frac{1}{1-z}\Bigr)(1-z)^\alpha,   
\end{equation} where $m$ is a real number greater than $-1$, $\alpha$ a real number in $ (1,2)$ and $\ell :\mathbb{C}\rightarrow\mathbb{C}$ satisfies a technical assumption $\mathcal{H}$ (detailed in Section \ref{sec6}) and its restriction on $\mathbb{R}$ is slowly varying. We denote by $\mathcal{L}$ the set of such functions. As we will see in Section \ref{sec6}, these assumptions imply that $\mu([n,\infty[) \sim c_{\alpha} \ell(n)/n^{\alpha}$, where $c_{\alpha}$ a constant depending only on $\alpha$.
In this heavy-tail regime, a condensation phenomenon once again emerges around the root, illustrated in Figure \ref{Fig5}.
We denote by $c^{o}_i(a)$ is the number of children of the $i$-th internal vertex counted in lexicographic order in $a$ and $\mathbf{Dir}(1,\ldots,1)$ the Dirichlet distribution with parameter $(1,\ldots,1)$. 
\begin{theorem}\label{th3.2}
    Let $\mu$ be an offspring distribution with generating function satisfying \eqref{defF}, where $1< \alpha <2$, $m>-1$ and $\ell \in \mathcal{L}$. Then, we have the joint convergence:
    \begin{enumerate}
        \item $R(T_{n,k})$ converges in distribution to $\ast_k$
        \item $n^{-1} c^o_0(T_{n,k})\xrightarrow[n \rightarrow +\infty]{\mathbb{P}} 1$
        \item $n^{-1}(L_1(T_{n,k}),\ldots,L_k(T_{n,k})) \xrightarrow[n \rightarrow +\infty]{(d)} \mathbf{Dir}(1,\ldots,1) $
    \end{enumerate}
\begin{figure}[h!]
    \centering
    \includegraphics[scale = 1]{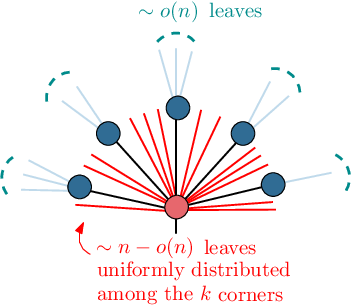}
    \caption{Illustration of the asymptotic shape of $T_{n,k}$.}
    \label{Fig5}
\end{figure}
\end{theorem}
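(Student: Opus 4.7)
The plan is to combine the $(R, L)$-decomposition introduced above the theorem with the one-big-jump principle for convolutions of heavy-tailed sub-probability measures. First I will write the $\mu$-BGW weight of a tree $T$ with decomposition $(R, L)$ as
\[
W(R, L) \;=\; \mu(0)^{n-k} \prod_{u \in V(R)} \mu\bigl(d_T(u)\bigr),
\]
where $d_T(u)$ is the number of children of $u$ in $T$; explicitly, $d_T(u) = c_R(u) + \sum_{i \in \mathrm{cor}(u)} L_i$ if $u$ is an internal vertex of $R$ (its $c_R(u)+1$ corners absorb grafted leaves), and $d_T(v) = L_{i(v)} + 1$ if $v$ is a leaf of $R$ (the $+1$ reflects the convention in the definition of $L$). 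The law of $T_{n,k}$ is then $(R, L) \mapsto W(R, L)/Z_{n,k}$ with $Z_{n,k} = \sum_{R', L'} W(R', L')$, and the analysis reduces to obtaining the leading asymptotics of $\sum_L W(R, L)$ shape-by-shape.

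The second step uses \eqref{defF}: the standard transfer theorem yields $\mu(n) \sim c_\mu \ell(n)/n^{1+\alpha}$, so $\mu$ has a regularly varying tail of index $\alpha \in (1, 2)$. The one-big-jump principle for convolutions of such sub-probability measures then shows that $\sum_L W(R, L)$ is dominated by configurations in which exactly one of the $\mu$-arguments above is of order $n$, the others remaining bounded. Placing the big argument at an internal vertex $u^{\ast}$ of $R$, whose $c_R(u^{\ast})+1$ corners accommodate a composition of an integer $A \sim n$, yields a combinatorial prefactor $\sim n^{c_R(u^{\ast})}/c_R(u^{\ast})!$. This leads to an estimate of the form
\[
\sum_L W(R, L) \;\sim\; K(R)\, \mu(0)^{n-k}\, \ell(n)\, n^{\, d(R) - 1 - \alpha}, \qquad d(R) := \max_{u \in I(R)} c_R(u),
\]
for an explicit positive constant $K(R)$ depending on $R$ and $\mu$. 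The tree $\ast_k$ is the \emph{unique} plane tree on $k$ vertices with $d(R) = k - 1$ (every other shape satisfies $d(R) \leq k - 2$); as there are only finitely many competitors, summing yields $\mathbb{P}(R(T_{n, k}) = \ast_k) = 1 - O(1/n)$, which is item (1).

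On the event $\{R(T_{n,k}) = \ast_k\}$ the weight collapses to $W(\ast_k, L) = \mu(0)^{n-k}\, \mu(k - 1 + A) \prod_{j=1}^{k-1} \mu(B_j + 1)$ with $A := L_1 + \cdots + L_k$ and $B_j := L_{k+j}$. The decisive feature is that $W(\ast_k, L)$ depends on $(L_1, \ldots, L_k)$ only through the sum $A$; consequently, conditionally on $(A, B_1, \ldots, B_{k-1})$, the vector $(L_1, \ldots, L_k)$ is uniform over the compositions of $A$ into $k$ non-negative parts. The same one-big-jump analysis shows that $(B_1, \ldots, B_{k-1})$ is tight whereas $A/n \to 1$ in probability. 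Items (2) and (3) follow at once: $c^o_0(T_{n,k}) = (k - 1) + A$ gives item (2), and the classical convergence of a uniform composition of $A$ into $k$ parts, rescaled by $A$, to $\mathbf{Dir}(1, \ldots, 1)$ (via the uniform spacings representation on $[0, A]$) combined with $A/n \to 1$ gives item (3); joint convergence is immediate from the explicit conditional description.

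The main obstacle will be to make the one-big-jump estimate quantitative enough to (i) pin down the leading asymptotics of $\sum_L W(R, L)$ for every competing shape and control the aggregate error when summing over the Catalan-many shapes, and (ii) rule out configurations in which two or more $\mu$-arguments are simultaneously large. Sharp local versions of subexponential convolution asymptotics provide the natural framework, but careful bookkeeping of the constants $K(R)$ and of the constraint $d_T(u) \geq c_R(u) \geq 1$ at every internal vertex of $R$ will be required.
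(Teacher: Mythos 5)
Your plan follows the probabilistic route of the paper's local-setting result (Theorem \ref{th1-intro}) rather than the paper's actual proof of Theorem \ref{th3.2} (which goes through Lemma \ref{lemformuleRnk} and singularity analysis of $G^a_\mu$), and it contains a genuine gap in the step identifying the limiting reduced tree. The claimed estimate $\sum_L W(R,L)\sim K(R)\,\mu(0)^{n-k}\,\ell(n)\,n^{d(R)-1-\alpha}$ with $d(R)=\max_u c_R(u)$, justified by ``exactly one $\mu$-argument of order $n$, the others bounded,'' is false whenever $R$ has two or more vertices with at least two children (which happens for every $k\ge 5$). Indeed, a vertex $u$ of $R$ with $c_u\ge 2$ receives its grafted leaves through the corner factor $\binom{L_u+c_u}{c_u}\mu(c_u+L_u)\asymp \ell(L_u)\,L_u^{\,c_u-1-\alpha}$, and since $c_u\ge 2>\alpha$ this is not summable: such a vertex contributes a factor of order $\ell(n)\,n^{\,c_u-\alpha}$, not a bounded constant, so configurations in which several such vertices take macroscopic shares dominate your claimed main term. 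Concretely, for $k=5$ and the shape whose root has out-degree $2$ with one child of out-degree $2$, the true order of $\sum_L W(R,L)$ is $\mu(0)^{n-k}\ell(n)^2 n^{3-2\alpha}$, much larger (as $\alpha<2$) than your predicted $\mu(0)^{n-k}\ell(n)n^{1-\alpha}$. The correct per-shape exponent is $\alpha\phi_0(a)+(\alpha-1)\phi_1(a)-k(\alpha-1)-2$, with slowly varying factor $\ell(n)^{\#\{u\,:\,c_u(a)\ge 2\}}$, which is what the paper extracts in Lemmas \ref{lem3.9} and \ref{lem4.9}; your formula agrees with it only for shapes having at most one vertex of out-degree $\ge 2$. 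The star still dominates because $\alpha\phi_0+(\alpha-1)\phi_1$ is uniquely maximized at $\ast_k$ (the paper proves this via the transformation $tr$ inside Lemma \ref{lem4.9}), but your argument does not supply the needed upper bound on competing shapes, and the advertised $1-O(1/n)$ rate and ``others remain bounded'' picture are artifacts of the same incorrect estimate.

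A secondary point: the local estimate $\mu(n)\sim c_\mu\ell(n)/n^{1+\alpha}$ is not a consequence of the tail estimate \eqref{propmu} alone; to obtain it you must invoke the full analytic hypothesis $\mathcal{H}$ on $\ell$ and Theorem 5 of \cite{FO90} applied to the exact singular part $(1-z)^\alpha\ell(1/(1-z))$, and you should say so, especially since the paper deliberately never uses a local estimate for $\mu$ in this regime and presents it as going beyond the local setting \eqref{hloc}. Granting that local estimate and item (1), your treatment of the star shape itself is sound and close in spirit to the paper's Section \ref{sec5}: the conditional uniformity of $(L_1,\ldots,L_k)$ given their sum, the tightness of the remaining $k-1$ entries, $A/n\to 1$, and the deduction of items (2) and (3) via the uniform-composition-to-Dirichlet limit (the paper's Lemma \ref{corcvgdir}) are all correct. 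The missing piece is precisely the competitor-shape estimate, and repairing it essentially amounts to redoing the exponent computation that the paper carries out by singularity analysis.
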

The conclusions of Theorem \ref{th3.2} are weaker than that of Theorem \ref{th1-intro}, but the assumptions can be seen as more general (local assumption vs tail setting). We also observe that Theorem \ref{th3.2} addresses cases not covered by Theorem \ref{th1-intro}, and conversely. For instance, Theorem \ref{th3.2} allows $\mu$ to vanish an infinite number of times, while Theorem \ref{th1-intro} allows finite variance offspring distributions. Moreover, the two proofs rely on different techniques.

We then turn to a third regime, which generalizes the case of a geometric offspring distribution by imposing a transfer-type condition on the generating function of $\mu$. More precisely, we consider an offspring distribution $\mu$ whose generating function $F_{\mu}$ has a radius of convergence $\rho >1$, and satisfies the following assumption for some $\alpha > 0$:
\begin{equation}\label{halpha_intro}
``F_{\mu}(z)\text{ is } \Delta\text{-analytic } \text{and satisfies } F_{\mu}(z)\underset{z \to \rho}{\sim} \frac{c}{(1-z/\rho)^{\alpha}} \text{ for some }c \in \mathbb{R}".
    \tag{$\mathcal{H}_{\alpha}$}
\end{equation}
 In contrast to the previous cases, this regime leads to a fundamentally different limit, which we describe in terms of the reduced tree and the sequence of leaves. For any tree $a$, we denote by $c_u(a)$ the number of children of vertex $u$ in $a$.
\begin{theorem} \label{th_cvg_transfert_case-intro}
 Set $\alpha >0$. Let $\mu$ be an offspring distribution with generating function satisfying \eqref{halpha}. Then, we have:
\begin{enumerate}
    \item For every tree $a$ with $k$ vertices,
    \[\mathbb{P}(R(T_{n,k}) = a) \xrightarrow[n \rightarrow \infty]{}\frac{\prod_{u=1}^k w_u(a) }{\sum_{b \in \mathbb{T}_k} \prod_{u=1}^k w_u(b)  },\]
    where $w_u(b) \coloneqq  \Gamma(\alpha + c_u(b))/\Gamma(1+c_u(b))$ for $b \in \mathbb{T}_k$, $u \in [\![1,k]\!]$ and $\Gamma$ is Euler's gamma function.
    \item Conditionally given $ R_{n,k}=a$, we have the following convergence in distribution
    \[\frac{L(T_{n,k})}{n} \xrightarrow[n \rightarrow +\infty]{(d)} \mathbf{Dir}\biggl( \biggl(\frac{c_1(a)+\alpha}{c_1(a)+1}\biggr)_{1\leq i\leq c_1(a)+1},\ldots, \biggl(\frac{c_k(a)+\alpha}{c_k(a)+1}\biggr)_{1\leq i\leq c_k(a)+1}\biggr).\]
\end{enumerate}
\end{theorem}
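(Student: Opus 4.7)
The plan is to extract $\mathbb{P}(R(T_{n,k}) = R_0,\, L(T_{n,k}) = \ell)$ from a generating-function decomposition over the reduced tree and then apply the Flajolet--Odlyzko transfer theorem under \eqref{halpha_intro}. For any plane tree $a$ with $n$ vertices, $k$ internal nodes, and $R(a) = R_0 \in \mathbb{T}_k$,
\[
\mathbb{P}(T = a) = \mu(0)^{n-k} \prod_{u \in R_0} \mu\bigl(c_u(R_0) + \lambda_u(a)\bigr),
\]
where $\lambda_u(a)$ denotes the number of leaves of $a$ grafted around $u$. Summing over all configurations with fixed $R_0$ and $\sum_u \lambda_u = n-k$ gives, up to an $R_0$-independent normalization,
\[
\mathbb{P}(R(T_{n,k}) = R_0) \,\propto\, [x^{n-k}] \prod_{u \in R_0} g_u(x),
\]
where $g_u(x) := F_\mu^{(c_u(R_0))}(x)/c_u(R_0)!$ if $c_u(R_0) \geq 1$ and $g_u(x) := F_\mu(x) - \mu(0)$ if $c_u(R_0) = 0$, using the identity $\sum_{\lambda \geq 0} \binom{\lambda+c}{c} \mu(c+\lambda) x^{\lambda} = F_\mu^{(c)}(x)/c!$ to encode the grouping of single-vertex sums.

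Next I would carry out the singularity analysis. Under \eqref{halpha_intro}, $F_\mu$ is $\Delta$-analytic with $F_\mu(z) \sim c_{\alpha}(1-z/\rho)^{-\alpha}$ near $z = \rho$. Termwise differentiation of the singular expansion (classical in a $\Delta$-domain) then gives, for every $c \geq 0$,
\[
F_\mu^{(c)}(z) \sim c_{\alpha}\, \rho^{-c}\, \frac{\Gamma(\alpha+c)}{\Gamma(\alpha)}\,(1-z/\rho)^{-(\alpha+c)},
\]
which rewrites uniformly as $g_u(z) \sim (c_{\alpha}/\Gamma(\alpha))\rho^{-c_u(R_0)} w_u(R_0)\,(1-z/\rho)^{-(\alpha + c_u(R_0))}$. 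Since $\sum_u c_u(R_0) = k-1$, the product $\prod_u g_u(z)$ has a singularity of order $k(\alpha+1)-1$ with leading coefficient proportional to $\prod_u w_u(R_0)$, and the transfer theorem yields
\[
[x^{n-k}] \prod_u g_u(x) \;\sim\; \text{(constants)} \cdot n^{k(\alpha+1)-2} \prod_{u \in R_0} w_u(R_0).
\]
Part (1) follows by normalizing over $\mathbb{T}_k$, since only $\prod_u w_u(R_0)$ depends on $R_0$.

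For Part (2) I would condition on $R(T_{n,k}) = R_0$ and write
\[
\mathbb{P}\bigl(L(T_{n,k}) = \ell \,\bigm|\, R(T_{n,k}) = R_0\bigr) = \frac{\prod_u \mu\bigl(c_u(R_0) + \lambda_u(\ell)\bigr)}{[x^{n-k}] \prod_u g_u(x)}.
\]
Using the local tail estimate $\mu(m) \sim (c_{\alpha}/\Gamma(\alpha))\rho^{-m} m^{\alpha-1}$ (itself a direct application of transfer to $F_\mu$) and the scaling $\lambda_u(\ell) \sim n\, t_u$, I would deduce a local limit asymptotic for the conditional probability; multiplying by the lattice-volume factor $n^{2k-2}$ (number of lattice points in the $(2k-2)$-dimensional simplex $\sum_i z_i = 1$) produces the limit density of $L(T_{n,k})/n$. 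Identification of this density with the stated Dirichlet law then proceeds via the Dirichlet aggregation/splitting property, combined with the observation that, conditionally on the aggregated leaf counts $(\lambda_u)_{u \in R_0}$, the corner counts within each block form a uniformly random composition of $\lambda_u$ into $c_u(R_0)+1$ parts (the $\mu$-weight depending only on $\lambda_u$, not on the intra-block split).

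The main obstacle is the singularity-analytic step: one must justify carefully (i) the termwise differentiation of the singular expansion of $F_\mu$ in a $\Delta$-domain (valid but nontrivial for non-integer $\alpha$), and (ii) the uniform validity of the multiplicative combination of $k$ such expansions in $\prod_u g_u$, so that the transfer theorem gives the correct leading-order asymptotic. A secondary technical point is upgrading the pointwise coefficient asymptotic to genuine convergence in distribution of the rescaled vector $L(T_{n,k})/n$; this I would handle via a standard dominated-convergence/Scheffé argument on the simplex, using the uniform integrability of the density provided by the above asymptotics.
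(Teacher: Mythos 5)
Your treatment of Part 1 is essentially the paper's: your decomposition of $\mathbb{P}(R_{n,k}=a)$ over the reduced tree is Lemma \ref{lemformuleRnk} (your $g_u$ coincide with $\widetilde F_\mu$ and $F_\mu^{(c_u)}/c_u!$ up to a harmless shift of the extracted coefficient), and the two analytic ingredients you flag as needing care, singular (termwise) differentiation in a $\Delta$-domain and transfer for the product, are exactly what the paper invokes (Theorem VI.8 and Corollary VI.1 of Flajolet--Sedgewick). The singular exponent $k(\alpha+1)-1$ and the fact that only $\prod_u w_u(a)$ depends on $a$ are correct, so this half is sound and follows the same route.

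For Part 2 there is a genuine gap, and it sits precisely in your last step. Your local-limit computation (which matches the paper's Lemma \ref{lemma1transfert}) gives, conditionally on $R_{n,k}=a$, that the aggregated leaf counts rescaled by $n$ converge to $\mathbf{Dir}(\alpha+c_1(a),\ldots,\alpha+c_k(a))$, and that each block of $L$ is an independent uniform composition of its aggregate. Carrying your density argument to the end, the limit density of $L(T_{n,k})/n$ on the $(2k-2)$-simplex is proportional to $\prod_{u=1}^k\bigl(\sum_{i=1}^{c_u(a)+1}t_{u,i}\bigr)^{\alpha-1}$, i.e.\ Dirichlet blocks split by independent \emph{uniform} Dirichlets. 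The stated limit $\mathbf{Dir}\bigl((\tfrac{\alpha+c_u(a)}{c_u(a)+1})_{1\le i\le c_u(a)+1},\,u=1,\ldots,k\bigr)$ has density proportional to $\prod_u\prod_i t_{u,i}^{(\alpha-1)/(c_u(a)+1)}$; these coincide only when $\alpha=1$. The ``Dirichlet aggregation/splitting property'' you invoke does not bridge this: splitting a component of parameter $\alpha+c_u$ by an independent $\mathbf{Dir}(\beta_1,\ldots,\beta_{c_u+1})$ yields a Dirichlet vector only when $\sum_i\beta_i=\alpha+c_u$, whereas your split is $\mathbf{Dir}(1,\ldots,1)$ with parameter sum $c_u+1$. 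So the identification step fails for $\alpha\neq1$ and cannot be patched as written. Be aware that the paper's own proof rests on the same identification (its Lemma \ref{lemma2transfert}), and the moment identity asserted there does not balance: with $k=2$, $\alpha_1=1$ and exponents $\lambda_1^1=\lambda_1^2=1$, the left-hand side equals $\tfrac{1}{3S(S+1)}$ with $S=\sum_j\alpha_j$ while the right-hand side equals $\tfrac{1}{4S(S+1)}$ (the degenerate case $n=1$ already shows the claim forces $\alpha_1=k$). What your estimates (and the paper's Lemma \ref{lemma1transfert}) actually establish is the mixed limit described above, which agrees with the announced Dirichlet law only in the case $\alpha=1$, e.g.\ the geometric offspring distribution; for general $\alpha$ you should state the limit in that mixed form rather than as the displayed Dirichlet.
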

In the particular case where $\alpha=1$, the reduced tree converges to a uniform tree with $k$ vertices, and the sequence of leaves, rescaled by $n$, converges in distribution to a Dirichlet distribution with parameters $(1,\ldots,1)$. A notable example of this setting is the geometric distribution on $\mathbb{Z}_{\geq0}$ with parameter $p \in (0,1)$. Its generating function is given by $\mu(k)=p(1-p)^k$ for $k\geq 0$ and  satisfies \eqref{halpha} for $\alpha = 1$, $c=p$, and $\rho = (1-p)^{-1}$. 

Finally, we consider a fourth and last regime, which can be seen as a generalization of the Poisson offspring distribution case, meaning that the generating function of $\mu$ satisfying the assumption \eqref{hp} defined by
\begin{equation}\label{hp-intro}
\text{``}F_{\mu}(z) = c\exp{(P(z))} \text{ where } c>0, P(z) \coloneqq \sum_{i=1}^pa_iz^i, a_i \geq 0 \text{ and } \mathrm{gcd}\{j:a_j\neq0\}=1. \text{''}
\tag{$\mathcal{H}_{P}$}
\end{equation}
\begin{theorem}\label{thPois-intro}
Let $\mu$ be an offspring distribution with generating function satisfying \eqref{hp}. 
\begin{enumerate}
    \item For every tree $a$ with $k$ vertices,
\[\mathbb{P}(R(T_{n,k}) = a) \xrightarrow[n \rightarrow \infty]{}\frac{\zeta_k(a)}{\sum_{b \in \mathbb{T}_k} \zeta_k(b)},\] where $\zeta_k(a) \coloneqq (\prod_{u=1}^k (c_u(a))!)^{-1} $. In other term, $R_{n,k}$ converges in distribution to a $\mathcal{P}(1)$-BGW tree conditioned to have $k$ vertices. 
\item Jointly, we have the following convergence in probability
    \[\frac{\bigl(X^n_1, \ldots, X^n_{k}\bigr)}{n} \xrightarrow[n \rightarrow +\infty]{(\mathbb{P})} \Bigl(\frac{1}{k},\ldots,\frac{1}{k}\Bigr),\]
    where $X^n_i$ is the number of children of the $i$-th internal node of $T_{n,k}$. 
    \item Conditionally given $ R(T_{n,k})=a$ and $(X^n_1, \ldots, X^n_{k})$, for every $i\in [\![1,k]\!] $, the vector \[\Bigl(L_{1+(\sum_{j=1}^{i-1}c_{j}(a)+1)}\bigl(T_{n,k}\bigr),\ldots, L_{c_i(a)+1+(\sum_{j=1}^{i-1}c_{j}(a)+1)}\bigl(T_{n,k}\bigr)\Bigr)\] is a composition of $n-k-(\sum_{j\neq i} X^n_j)$ into $c_i(a)+1$ parts sampled uniformly at random.
\end{enumerate}
\end{theorem}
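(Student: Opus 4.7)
The plan is to follow the decomposition strategy already used in the proof of Theorem~\ref{th_cvg_transfert_case-intro}, but to replace the singularity/transfer analysis of $F_\mu$ by a Hayman-style saddle-point analysis, since under \eqref{hp-intro} the generating function $F_\mu(z)=c\exp(P(z))$ is entire.

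Part~3 will fall out directly from the combinatorial bijection. As in the previous cases, the triple $(R(a),(\tau_v(a))_{v\in R(a)},L(a))$, where $\tau_v(a)$ is the total number of leaves of $a$ grafted around $v$, characterizes $a$; given $R=r$ and $(\tau_v)$, there are $\prod_{v\text{ internal in }r}\binom{\tau_v+c_v(r)}{c_v(r)}$ compatible $L$'s, and these are distributed uniformly within each block of corners. Since $\mathbb{P}(T_{n,k}=a)\propto\mu(0)^{n-k}\prod_{v\in R(a)}\mu(c_v(a))$ depends on $a$ only through $(R,(\tau_v))$, Part~3 is immediate.

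For Part~1, summing over $(\tau_v)$ with $\sum\tau_v=n-k$ gives
\[\mathbb{P}(R(T_{n,k})=r)\;\propto\;[z^{n-k}]\prod_{v\in r}A_v(z),\]
where $A_v(z)=F_\mu(z)-\mu(0)$ if $v$ is a leaf of $r$ and $A_v(z)=F_\mu^{(c_v(r))}(z)/c_v(r)!$ otherwise. Using Faà di Bruno on $F_\mu=c\exp(P)$,
\[\prod_{v\in r}A_v(z)\;=\;c^{k}\,e^{kP(z)}\bigl(1-e^{-P(z)}\bigr)^{\ell(r)}\prod_{v\text{ int.}}\frac{B_{c_v(r)}\bigl(P'(z),\ldots,P^{(c_v(r))}(z)\bigr)}{c_v(r)!},\]
where $B_c$ is the complete Bell polynomial and $\ell(r)$ is the number of leaves of $r$. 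The function $e^{kP}$ is Hayman-admissible thanks to \eqref{hp-intro}, with saddle $z_n\to\infty$ solving $kz_nP'(z_n)=n-k$. At $z_n$ we have $(1-e^{-P(z_n)})^{\ell(r)}\to 1$, and since $P^{(j)}(z)\sim p!\,a_p\,z^{p-j}/(p-j)!$ at infinity, each Bell polynomial is dominated by its all-singleton term, $B_{c_v(r)}(P'(z_n),\ldots)\sim(P'(z_n))^{c_v(r)}$. Using $\sum_{v\in r}c_v(r)=k-1$, the only $r$-dependence in the leading saddle-point asymptotics is $\prod_v 1/c_v(r)!=\zeta_k(r)$, which after renormalization yields Part~1.

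For Part~2, given $R(T_{n,k})=r$, the conditional law of $(\tau_v)_{v\in r}$ is the Gibbs measure whose factors are the $A_v$'s above. Tilting each marginal by $z_n$ makes the $\tau_v$'s independent, with mean $z_nA_v'(z_n)/A_v(z_n)\sim(n-k)/k$ and variance of order $z_n^2P''(z_n)+z_nP'(z_n)=O(n)$ (by the same Bell-polynomial estimates). A local limit theorem for the sum of the tilted $\tau_v$'s---applicable because $F_\mu$ is Hayman-admissible and the aperiodicity $\gcd\{j:a_j\neq 0\}=1$ holds---then shows that conditioning on $\sum\tau_v=n-k$ perturbs each marginal only at scale $\sqrt n$, so $\tau_v/n\to 1/k$ in probability. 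Since $X^n_i=c_{v_i}(r)+\tau_{v_i}$ with $c_{v_i}(r)\leq k-1$ bounded, Part~2 follows. The main obstacle will be the uniform local limit theorem for the tilted $(\tau_v)$'s, which requires delicate control of the characteristic functions of the tilted $A_v$ near the saddle; both the Hayman-admissibility of $F_\mu$ and the aperiodicity implied by \eqref{hp-intro} are essential here.
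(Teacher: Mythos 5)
Your decomposition is correct, and for Parts 1 and 3 it is essentially the paper's own argument in different packaging: your identity $\mathbb{P}(R(T_{n,k})=r)\propto[z^{n-k}]\prod_{v\in r}A_v(z)$ is exactly Lemma \ref{lemformuleRnk} with the factor $z^{-\phi_0(r)}$ absorbed, and your Fa\`a di Bruno factorization is Lemma \ref{lempolyetF}, where $(1-e^{-P})^{\ell(r)}$ is expanded binomially into a finite combination of $e^{(k-j)P}$ and the complete Bell polynomials are the polynomials $P_{c_u}$ there; Part 3 is the same uniformity-within-corners observation in both proofs. Where you differ genuinely is Part 2: you tilt the Gibbs measure of the leaf-counts $(\tau_v)$ at the saddle $z_n$ and invoke a local limit theorem for a fixed number $k$ of tilted lattice variables with variance of order $n$, whereas the paper avoids any LLT and bounds $\mathbb{P}\bigl(|X^n_1/(n-1)-1/k|\geq\epsilon\mid \sum_j X^n_j=n-1\bigr)$ by a ratio of coefficient sums, using only the monotone decay of $u^{i,k}_n=e^1_i e^{k-1}_{n-1-i}$ away from $i\approx n/k$, which follows from the ratio estimate $e^m_{n+1}/e^m_n\sim(mpa_p)^{1/p}n^{-1/p}$ of Lemma \ref{cvg_e_n}. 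Your route is conceptually cleaner and would yield more (fluctuations at scale $\sqrt n$), but the uniform LLT with a tilt going to infinity, which you rightly flag as the main obstacle, is exactly where the analytic work sits and rests on the same saddle-point input as the paper's elementary argument; note also that for the stated law of large numbers a full LLT is not needed: a lower bound $\mathbb{P}(\sum_v\tau_v=n-k)\gtrsim n^{-1/2}$, the uniform upper bound $\sup_s\mathbb{P}(\sum_{v\neq v_1}\tau_v=s)=O(n^{-1/2})$, and Chebyshev for one tilted marginal (variance $O(n)$) already give the conclusion.

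One step in your Part 1 is asserted rather than proved: passing from Hayman admissibility of $e^{kP}$ to $[z^{n-k}]\bigl(e^{kP}H_r\bigr)\sim H_r(z_n)\,[z^{n-k}]e^{kP}$, where $H_r(z)=(1-e^{-P(z)})^{\ell(r)}\prod_{v}B_{c_v(r)}(P'(z),\ldots)/c_v(r)!$. Hayman's closure results cover polynomial multipliers, but $H_r$ is not a polynomial, so you still need to expand $(1-e^{-P})^{\ell(r)}$ and show that the terms $Q(z)e^{(k-j)P(z)}$ with $j\geq1$ contribute negligibly to the coefficient, and that a polynomial multiplier contributes only through its top-degree term (equivalently, that the saddle shift it causes is harmless). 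This is not a wrong idea, just the bookkeeping the paper carries out via Lemma \ref{lempolyetF} together with the coefficient-decay consequences of Lemma \ref{cvg_e_n}; once you make that transfer explicit, your computation that the $r$-dependence of the leading term reduces to $\prod_v 1/c_v(r)!=\zeta_k(r)$ (using $\sum_v c_v(r)=k-1$ and $P^{(j)}(z)\asymp z^{p-j}$, so the all-singleton term dominates each Bell polynomial) gives Part 1 exactly as in the paper.
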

In the particular case of a Poisson offspring distribution with parameter $\lambda >0$, given by $\mu(k)=e^{-\lambda}\lambda^k/k!$ for $k\geq 0$, the generating function satisfies \eqref{hp} for $ c = \exp{(-\lambda)} $ and $P(z)=\lambda z$, so does the generating function of $\sum_{i=1}^piX_i$ where $X_i \sim \mathcal{P}(a_i)$ are independent, $ c = \exp{(\sum_{i=1}^pa_i)} $ and $P(z)=\sum_{i=1}^pa_iz^i$.
\paragraph{Techniques.} We now comment on the main tools involved in the proof of each case. On the one hand, the main tool in the proof of Theorem \ref{thcvg_feuilles-cas-binaire}, its generalization Theorem \ref{R_nk_behavior} and Theorem \ref{th1-intro} is the encoding of conditioned BGW trees through their \L ukasiewicz  path, which, roughly speaking, correspond to a positive random walk conditioned to return to zero at a large time. 
In fact, the key idea in the proof of Theorem \ref{R_nk_behavior} is to show that the tree tends to maximize its number of edges under a given constraint. The \L ukasiewicz  path encoding makes this strategy explicit, enabling a direct application of classical results on conditioned random walks. In the proof of Theorem \ref{th1-intro}, we establish a condensation phenomenon by using a well-known result from the theory of random walks, often referred to as the `one big jump' theorem. This result essentially states that a random walk conditioned to reach a very high value in a short time typically does so by making one large jump and smaller ones. On the other hand, the key result that serves as a common starting point for the proofs of Theorems \ref{th3.2}, \ref{th_cvg_transfert_case-intro} and \ref{thPois-intro} is Lemma \ref{lemformuleRnk}, which expresses the distribution of the reduced tree in terms of the coefficients of the generating function and its derivatives. This leads us to study the asymptotic behavior of these coefficients, and thus this part relies on complex-analytic methods. 

\paragraph{Structure of the paper.} The paper is organized as follows. Section \ref{sec2} recalls basic definitions and properties of Bienaymé–Galton–Watson trees, as well as the coding function used to analyze their structure. Section \ref{sec3} introduces the biconditioning by the total number of vertices and leaves, and presents the main result along with its proof. Section \ref{sec4} presents the biconditioning by the total number of vertices and internal nodes and gives two different descriptions of a BGW tree, one through  the \L ukasiewicz path and the other via generating function. Finally, Sections  \ref{sec5}$-$\ref{sec8} examine different regimes depending on the behavior of the offspring distribution.

\paragraph{Acknowledgements.} I would like to sincerely thank my PhD advisors, Igor Kortchemski and Cyril Marzouk, for their invaluable advice, support, and feedback throughout this work.

\section{Background on rooted planar trees and BGW trees} \label{sec2}
In this section, we recall basic definitions and properties of trees and random trees that will be used throughout the paper. For further details and proofs of the stated results, the reader is referred to \cite{LG05, PC06}.
\subsection{Rooted planar trees and \L ukasiewicz paths}
Let $\mathcal{U} \coloneqq \bigcup_{n=0}^\infty (\mathbb{Z}_{>0})^n $ denote the set of words on the alphabet $\mathbb{Z}_{>0}$, where $\mathbb{Z}_{>0}^0 = \{\varnothing\} $. If $u,v \in \mathcal{U}, uv$ denotes the concatenation of $u$ and $v$. If $u,u',v \in \mathcal{U}$ and $u=vu'$, $v$ is said to be an ancestor of $u$. In particular if $u' \in \mathbb{Z}_{>0}$, $v$ is said to be the parent of $u$ and $u$ a child of $v$. 
\begin{definition}
    A rooted planar tree $\tau$ is a subset of $\mathcal{U}$ such that 
    \begin{enumerate}
        \item $\varnothing \in \tau$,
        \item if $u = vu' \in t$ then $v \in \tau$,
        \item for all $u \in \tau$, there exists $c_u(\tau) \in \mathbb{Z}_{\geq 0}$ such that $uj \in \tau$ if and only if $j \leq c_u(\tau)$; $c_u(\tau)$ is called the number of children of $u$ in $\tau$.
    \end{enumerate}
\end{definition}
See Figure \ref{Fig0} for an example.
\begin{figure}[h]
\centering
        \includegraphics[scale=1]{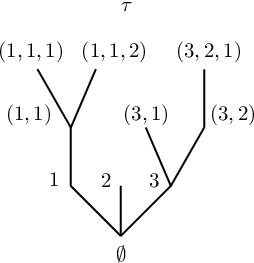}
        \caption{An example of a rooted planar tree.}
\label{Fig0}
\end{figure}

For every rooted planar tree $\tau$, $\varnothing$ is called the root of $\tau$, every $u \in \tau$ is a vertex (or a node) of $\tau$. If $u$ has no child it is a leaf of $\tau$ and it is an internal node otherwise. Finally, $|\tau|$ denotes the number of vertices in $\tau$. One can easily check that for every tree $\tau$,
$\sum_{u\in \tau}c_u(\tau)=|\tau|-1$. In the following, we will only consider rooted planar trees, so from now on we refer to them as 'trees' for simplicity. Let us denote by $\mathbb{T}$ the set of such trees and by $\mathbb{T}_n$ the set of trees having $n$ vertices.

At times we will explore a tree, meaning that we visit its vertices one by one. It is then necessary to choose the order in which the vertices are visited. A natural order on trees is given by the lexicographic order on the alphabet $\mathbb{Z}_{>0}\cup\{\varnothing\}$. 
Now, let us define a coding function of trees. 
\begin{definition} Let $\tau$ be a tree and $u_0,u_1,...,u_{|\tau|-1}$ be its vertices listed in the lexicographic order. The \L ukasiewicz path of $\tau$, denoted by $W(\tau)=(W_n(\tau), 0 \leq n \leq |\tau|)$ is defined as follows:
\[ W_n(\tau) = \left\{
    \begin{array}{ll}
        0 & \mbox{for } n=0 \\
        W_{n-1}(\tau) + c_{u_{n-1}} - 1 & \mbox{for all } 1\leq n \leq |\tau|.
    \end{array}
\right. \]
\end{definition}
See Figure \ref{Fig01} for an example.
\begin{figure}[h]
\centering
        \includegraphics[scale=1]{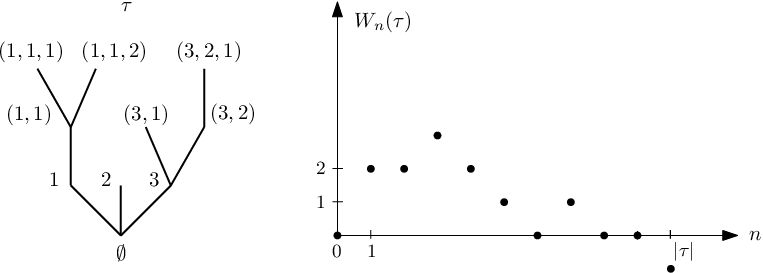}
        \caption{A tree $\tau$ and its \L ukasiewicz path.}
        \label{Fig01}
\end{figure}

The following well-known result establishes a bijection between finite trees and paths.

\begin{proposition} \label{bij_arbre_luka}
    The set $\mathbb{T}_n$ is in bijection with the set of paths $f:[\![0,n]\!] \rightarrow \mathbb{Z}$ such that \[ f(0)=0, \quad f(n)=-1, \quad f(i) \geq 0 \text{ and } f(i+1)-f(i) \in \mathbb{Z}_{\geq -1}  \text{ for } i \in [\![0,n-1]\!] .\]
\end{proposition}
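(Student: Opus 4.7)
The plan is to exhibit an explicit inverse to the map $\tau \mapsto W(\tau)$. First I would check that $W(\tau)$ lies in the claimed path space. The conditions $W_0(\tau)=0$ and $W_{i+1}(\tau)-W_i(\tau) = c_{u_i}(\tau)-1 \in \mathbb{Z}_{\geq -1}$ are immediate from the definition. Summing the jumps and using the identity $\sum_{u \in \tau} c_u(\tau) = |\tau|-1$ gives $W_{|\tau|}(\tau) = (|\tau|-1) - |\tau| = -1$. For the nonnegativity $W_n(\tau) \geq 0$ when $n < |\tau|$, I would give a depth-first traversal interpretation: the quantity $W_n(\tau)+1$ equals the number of vertices that have been revealed as children of some previously visited vertex but have not themselves been visited by time $n$. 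Since $n < |\tau|$ means unvisited vertices remain and the lexicographic order exhausts them precisely through this pending pool, this pool is non-empty, hence $W_n(\tau) \geq 0$.

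For the inverse, given a path $f$ satisfying the stated conditions, I would reconstruct a tree inductively. Start with the root $\varnothing$ as the vertex visited at time $0$. At each step $i \in [\![0,n-1]\!]$, declare that the vertex $u_i$ currently being visited has $c_{u_i} \coloneqq f(i+1)-f(i)+1 \in \mathbb{Z}_{\geq 0}$ children, inserted as $u_i 1, \ldots, u_i c_{u_i}$ into a pool of pending vertices; then let $u_{i+1}$ be the smallest word of this pool in lexicographic order, removed from the pool upon selection. The condition $f(i) \geq 0$ for $i < n$ says that the pool has cardinality $f(i)+1 \geq 1$ after visiting step $i-1$, so the next vertex is always well-defined, and $f(n)=-1$ forces the pool to be empty exactly at step $n$. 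This procedure produces a valid tree with $n$ vertices.

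Finally, I would verify that the two maps are mutual inverses. Applying the Łukasiewicz encoding to the tree reconstructed from $f$ yields back $f$, since the jump $c_{u_i}-1$ agrees with $f(i+1)-f(i)$ by construction; conversely, the decoding applied to $W(\tau)$ recovers $\tau$ because both procedures visit vertices in the same lexicographic order and the children counts are read off from the same jumps. The single step that requires genuine argument is the pending-pool interpretation of $W_n+1$, which simultaneously justifies the positivity constraint in the forward direction and the well-definedness of the inverse.
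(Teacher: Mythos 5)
Your proof is correct; note that the paper itself does not prove this proposition, stating it as a well-known fact and deferring to the references given at the start of Section 2, and your argument is precisely the standard one: the pending-pool (depth-first stack) interpretation of $W_n+1$, together with the explicit decoding of a path into a tree. The only place where you could add a line is the assertion that the lexicographic order interacts with the pool as claimed: since the parent of any vertex is a prefix of it and hence lexicographically smaller, the parent of the next unvisited vertex in lexicographic order is already visited, so that vertex lies in the pool and is its lexicographically smallest element; this justifies both the nonnegativity in the forward direction and the fact that the reconstruction visits the vertices of the resulting tree in their lexicographic order, so the two maps are mutual inverses as you state.
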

\subsection{BGW trees}
From now on, we focus on a particular family of random trees,  Bienaymé–Galton–Watson trees. Roughly speaking, a  Bienaymé–Galton–Watson tree with offspring distribution $\mu$ is a random tree coding the genealogy of a population starting with one individual and where all individuals repoduce independently of each other according to the distribution $\mu$. Let us give a proper definition. 
\begin{definition}
    Let $\mu$ be an offspring distribution on $\mathbb{Z}_{\geq 0}$ and $(C_{u})_{u\in \mathcal{U}}$ be independent and indentically distributed random variables of law $\mu$. A random tree is a  Bienaymé–Galton–Watson tree with offspring distribution $\mu$ ($\mu$-BGW tree) if it has the same law as the random tree $T$ defined by the fact that for all $u \in T$, $c_u(T)=C_u$.
\end{definition}
In the sequel, we denote by $T$ a $\mu$-BGW tree and by $T_{n}$ such a tree conditioned on having $n$ vertices. One can note that the probability of $T$ being equal to a finite tree $\tau$ is explicit: 
\[\mathbb{P}(T=\tau)= \prod_{ u \in \tau}\mu(c_u(\tau)).\]

We define $W=(W_i, 0 \leq i \leq n)$ as the random walk started from $W_0=0$ with step-distribution $\nu$, where $\nu(i) =\mu(i+1) $ for all $ i \geq -1$. Let $H_{k}(W)\coloneqq\inf\{i\geq 0 : W_i=k\}$ denotes the first hitting time of $k \in \mathbb{Z}$ by the walk $W$.

The following proposition about the \L ukasiewicz path of a $\mu$-BGW tree directly follows from Proposition \ref{bij_arbre_luka}.
\begin{proposition}\label{propcodingluka}
  The \L ukasiewicz path $W(T_n)=(W_i(T_n), 0 \leq i \leq n)$ associated with $T_n$ has the same distribution as $W=(W_i, 0 \leq i \leq n)$ conditioned on $H_{-1}(W)=n$.
 \end{proposition}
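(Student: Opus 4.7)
The plan is to directly compare the two probability distributions on admissible paths by using the bijection of Proposition \ref{bij_arbre_luka} together with the product form of $\mathbb{P}(T=\tau)$.

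First, I would fix an admissible path $w=(w_0,\dots,w_n)$, i.e.\ a path with $w_0=0$, $w_n=-1$, $w_i\geq 0$ for $0\leq i\leq n-1$, and increments $w_{i+1}-w_i\in\mathbb{Z}_{\geq-1}$. By Proposition \ref{bij_arbre_luka}, there exists a unique tree $\tau_w\in\mathbb{T}_n$ whose \L ukasiewicz path is $w$, and by construction $c_{u_i}(\tau_w)=w_{i+1}-w_i+1$ for the lexicographic enumeration $u_0,\dots,u_{n-1}$ of the vertices of $\tau_w$. Using the explicit law
\[
\mathbb{P}(T=\tau_w)=\prod_{u\in\tau_w}\mu(c_u(\tau_w))=\prod_{i=0}^{n-1}\mu(w_{i+1}-w_i+1)=\prod_{i=0}^{n-1}\nu(w_{i+1}-w_i),
\]
where in the last equality we use $\nu(j)=\mu(j+1)$. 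The right-hand side is exactly $\mathbb{P}(W=w)$, since $W$ is a random walk started at $0$ with i.i.d.\ increments of law $\nu$.

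Next, I would identify the conditioning event. By Proposition \ref{bij_arbre_luka}, the admissible paths of length $n$ are precisely the trajectories of $W$ satisfying $W_i\geq 0$ for $i<n$ and $W_n=-1$; equivalently, the event $\{H_{-1}(W)=n\}$. Summing the identity $\mathbb{P}(T=\tau_w)=\mathbb{P}(W=w)$ over all admissible paths $w$ of length $n$, or equivalently over all $\tau\in\mathbb{T}_n$, yields
\[
\mathbb{P}(|T|=n)=\mathbb{P}(H_{-1}(W)=n).
\]

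Finally, for any $\tau\in\mathbb{T}_n$ with \L ukasiewicz path $w$, dividing through by this common normalization gives
\[
\mathbb{P}(W(T_n)=w)=\frac{\mathbb{P}(T=\tau)}{\mathbb{P}(|T|=n)}=\frac{\mathbb{P}(W=w)}{\mathbb{P}(H_{-1}(W)=n)}=\mathbb{P}\bigl(W=w\,\big|\,H_{-1}(W)=n\bigr),
\]
which is exactly the claim. No step looks delicate: the only point to be careful about is the identification between the event that a tree has exactly $n$ vertices and the event that the walk $W$ first hits $-1$ at time $n$, and this follows immediately from the bijection in Proposition \ref{bij_arbre_luka}. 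There is no real obstacle here; the proof is essentially a bookkeeping exercise built on top of the coding bijection.
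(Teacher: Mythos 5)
Your argument is correct and is exactly the route the paper has in mind: the paper gives no separate proof, stating only that the proposition follows directly from the bijection of Proposition \ref{bij_arbre_luka}, and your computation (product formula for $\mathbb{P}(T=\tau)$ matching the increment probabilities of $W$, then identifying $\{|T|=n\}$ with $\{H_{-1}(W)=n\}$ via the skip-free-to-the-left property) is the standard way to fill in that one-line justification. No gaps.
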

  Since the conditioning on  $H_{-1}(W) = n$ can be delicate to work with, Proposition \ref{consequencecyclelemma} offers a practical alternative.

\begin{proposition}\label{consequencecyclelemma}
   Let $X_1,\ldots,X_n$ denotes the i.i.d. increments of $W=(W_i, 0 \leq i \leq n)$ and let $A \subset \mathbb{R}^n$ be stable by cyclic permutation, i.e. $\mathbf{x} \in A$ implies $\mathbf{x}^{(i)} \in A$ for all $0\leq i \leq n$. Then, we have the following equality:
   \[\mathbb{P}\bigl((X_1,...,X_n) \in A, W_n=-1, W_i \geq 0 \text{ for } i \in [\![1,n-1]\!]\ \bigr)= \frac{1}{n} \mathbb{P}\bigl((X_1,...,X_n) \in A, W_n=-1 \bigr).\]
\end{proposition}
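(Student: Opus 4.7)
The plan is to deduce the identity from the classical cycle lemma of Dvoretzky--Motzkin. I would first recall the version we need: for any $(x_1, \ldots, x_n) \in \mathbb{Z}_{\geq -1}^n$ with $x_1 + \cdots + x_n = -1$, there is exactly one index $k \in \{0, 1, \ldots, n-1\}$ such that the cyclic shift $\sigma_k(x) \coloneqq (x_{k+1}, \ldots, x_n, x_1, \ldots, x_k)$ has all partial sums $\sum_{j=1}^{i} x_{k+j}$ (indices mod $n$) nonnegative for $i \in [\![1, n-1]\!]$. Uniqueness is the crucial point and comes from the fact that, since the steps are $\geq -1$, the value $-1$ can only be reached from $0$, so the walk hits its strict global minimum at a unique time; that time is precisely the unique good $k$. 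Note that the random variables $X_1, \ldots, X_n$ take values in $\mathbb{Z}_{\geq -1}$ almost surely, so this form of the cycle lemma applies.

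Given the cycle lemma, the key identity is, for any realization $(x_1, \ldots, x_n)$ with sum $-1$,
\[\mathbb{1}\{x \in A\} = \sum_{k=0}^{n-1} \mathbb{1}\bigl\{\sigma_k(x) \in A, \ \textstyle\sum_{j=1}^i (\sigma_k(x))_j \geq 0 \text{ for } 1 \leq i \leq n-1\bigr\}.\]
Indeed, cyclic shifts preserve the total sum; the stability of $A$ under cyclic permutation means $\sigma_k(x) \in A$ is equivalent to $x \in A$; and the cycle lemma makes exactly one of the $n$ indicators on the right equal to $1$ (otherwise all of them are $0$). Applying this to $(X_1,\ldots,X_n)$ and multiplying by $\mathbb{1}\{W_n = -1\}$, one obtains
\[\mathbb{1}\{X \in A, W_n = -1\} = \sum_{k=0}^{n-1} \mathbb{1}\bigl\{\sigma_k(X) \in A, \ W_n(\sigma_k(X)) = -1, \ W_i(\sigma_k(X)) \geq 0 \text{ for } 1 \leq i \leq n-1\bigr\}.\]

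To conclude, I would take expectations on both sides. Since $X_1, \ldots, X_n$ are i.i.d., the vector $\sigma_k(X)$ has the same law as $X$ for each $k$, so each of the $n$ terms on the right-hand side has probability $\mathbb{P}(X \in A, W_n = -1, W_i \geq 0 \text{ for } 1 \leq i \leq n-1)$. This yields
\[\mathbb{P}(X \in A, W_n = -1) = n \cdot \mathbb{P}\bigl(X \in A,\, W_n = -1,\, W_i \geq 0 \text{ for } 1 \leq i \leq n-1\bigr),\]
which is the claim after dividing by $n$. There is no genuine obstacle; the only point to watch is ensuring the cycle lemma provides \emph{exactly} one good shift rather than merely at least one, which is precisely what the ``downward jumps of size at most $1$'' property of the step distribution guarantees, together with the exchangeability of the i.i.d.\ increments that underlies the equidistribution of the $n$ terms.
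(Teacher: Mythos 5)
Your argument is correct and takes essentially the same route as the paper: the paper's proof of this proposition consists precisely of invoking the classical cycle lemma (stated there for general $k$ and used here with $k=1$) together with the invariance in law of the i.i.d.\ increments under cyclic shifts, and your indicator decomposition plus the expectation step is exactly that deduction, spelled out in detail (including the correct use of the group property of shifts to turn the one-sided stability of $A$ into an equivalence). One inaccuracy worth flagging, although it does not affect the validity of your proof since the cycle lemma is a classical result that the paper also takes for granted: your parenthetical justification of uniqueness, namely that ``the walk hits its strict global minimum at a unique time'', is false as stated. With increments in $\mathbb{Z}_{\geq -1}$ and total sum $-1$, the minimum of $W_0,\ldots,W_{n-1}$ can be attained several times (take increments $(-1,-1,+1,-1,+1)$, for which $W=(0,-1,-2,-1,-2,-1)$ attains its minimum $-2$ at times $2$ and $4$). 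The correct statement is that the unique good shift is the one starting just after the \emph{first} time this minimum is attained; existence uses integrality of the steps, and uniqueness follows because the periodic extension of the walk decreases by exactly $1$ over a full period, so two distinct good starting points in $\{0,\ldots,n-1\}$ would contradict each other.
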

This proposition is a direct consequence of a classical combinatorial result, known as the cycle lemma, recalled below.
\begin{lemma}[Cycle lemma]
    Set $\mathbf{x}=(x_1,\ldots,x_n)$ a sequence taking values in $\mathbb{Z}_{\geq -1}$ and such that $\sum_{i=1}^nx_i=-k$ for some $1\leq k\leq n$. For $1\leq i\leq n$, we define the $i^{th}$ cyclic shift of $\mathbf{x}$, denoted by $\mathbf{x}^{(i)}$, as follows:  $\mathbf{x}^{(i)} \coloneqq (x_{i+1},\ldots,x_n,x_1,\ldots,x_i)$. Note that $\mathbf{x}^{(n)}=\mathbf{x}$. Then, among the $n$ cyclic shifts, there are exactly $k$ of them that hit $-k$ for the first time at time $n$.
\end{lemma}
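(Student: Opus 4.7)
The plan is to translate the cyclic condition into a counting problem for strict minima of a periodic extension of the partial sums, and then to resolve this count via a sliding-window running-minimum argument that crucially uses the step constraint $x_i\geq -1$. Concretely, I would set $S_0=0$, $S_m = x_1+\cdots+x_m$ for $m\in[\![1,n]\!]$, and extend $S$ to a bi-infinite sequence $\tilde{S}:\mathbb{Z}\to\mathbb{Z}$ by the rule $\tilde{S}_{m+n}=\tilde{S}_m-k$. For $i\in[\![1,n]\!]$, the $j$-th partial sum of $\mathbf{x}^{(i)}$ equals $\tilde{S}_{i+j}-\tilde{S}_i$; since the increments are $\geq -1$ and the total drop over $n$ steps is $-k$, the shift $\mathbf{x}^{(i)}$ hits $-k$ for the first time at step $n$ if and only if $\tilde{S}_{i+j}>\tilde{S}_{i+n}$ for every $j\in[\![1,n-1]\!]$. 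Setting $\ell = i+n$, this says exactly that $\tilde{S}_\ell$ is the strict minimum of the window $\{\tilde{S}_{\ell-n+1},\ldots,\tilde{S}_\ell\}$; call such $\ell$ a \emph{strict window minimum}. Since $\tilde{S}_{\ell-n}=\tilde{S}_\ell+k$ shifts the window values uniformly, strict window-minimality is invariant under $\ell\mapsto\ell-n$, so counting good shifts reduces to counting strict window minima in $[\![1,n]\!]$.

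Next, I would study the running window-minimum $L_\ell:=\min\{\tilde{S}_j:\ell-n+1\leq j\leq \ell\}$. Sliding the window forward by one step removes the value $\tilde{S}_{\ell-n}=\tilde{S}_\ell+k$ and inserts the smaller value $\tilde{S}_\ell$, so $(L_\ell)$ is non-increasing and satisfies $L_\ell<L_{\ell-1}$ exactly when $\ell$ is a strict window minimum. Two further observations pin down the count. First, periodicity gives $L_0=k+L_n$, so the total decrement of $L$ over $[\![1,n]\!]$ equals exactly $k$. Second, each strict drop of $L$ has size $1$: at such a drop, $L_\ell=\tilde{S}_\ell$, and $L_{\ell-1}\leq \tilde{S}_{\ell-1}\leq \tilde{S}_\ell+1=L_\ell+1$ by the constraint $x_\ell\geq -1$, so the positive-integer drop $L_{\ell-1}-L_\ell$ is necessarily $1$. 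Combining, $L$ undergoes exactly $k$ strict drops on $[\![1,n]\!]$, so there are exactly $k$ strict window minima there, and hence exactly $k$ good cyclic shifts among $\mathbf{x}^{(1)},\ldots,\mathbf{x}^{(n)}$.

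The subtle point in this strategy is the bound on individual drop sizes, for which the hypothesis $x_i\geq -1$ is indispensable; the rest of the argument is routine bookkeeping on the periodic extension.
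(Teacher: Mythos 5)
Your proof is correct. Note that the paper does not actually prove this statement: it recalls the cycle lemma as a classical combinatorial fact and uses it as a black box, so there is no internal proof to compare against; what you have written is a complete, self-contained argument in the spirit of the standard Dvoretzky--Motzkin proof. Your packaging via the periodized partial sums $\tilde S$ and the sliding-window minimum $L_\ell$ is sound: the translation of ``first hit of $-k$ at time $n$'' into ``$\tilde S_\ell$ is a strict minimum of its length-$n$ window'' is valid precisely because the steps are $\geq -1$, so the walk cannot jump past $-k$ (you invoke this, and it is the right justification); the monotonicity of $L$, the identity $L_0-L_n=k$ from periodicity, and the fact that every strict drop of $L$ has size exactly $1$ (again using $x_\ell\geq -1$, since at a drop $L_\ell=\tilde S_\ell$ while $L_{\ell-1}\leq \tilde S_{\ell-1}\leq \tilde S_\ell+1$) are all checked correctly, and together they yield exactly $k$ strict window minima per period, hence $k$ good shifts. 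Compared with the more common presentation (good shifts correspond to the $k$ indices at which the walk over one period achieves the values of its strict running minimum for the last time), your sliding-window bookkeeping is equivalent but makes the role of the hypothesis $x_i\geq -1$ particularly transparent, which is a pedagogical plus; it buys nothing in generality but nothing is lost either.
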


\section{Large BGW trees with a fixed number of leaves} \label{sec3}
\begin{table}[htbp]\caption{Table of the main notation and symbols introduced in Section \ref{sec3} and used later.}
\centering
\begin{tabular}{c c p{12cm} }
\toprule
$\mathbb{T}_{i}$ && the set of trees with $i$ vertices\\
$\mathbb{T}^{j}$ && the set of trees with $j$ leaves\\
$\mathbb{T}^j_{i}$ && the set of trees with $i$ vertices and $j$ leaves  \\
$\mathcal{L}^n_{m,p}$ && $ \{ (\ell_{1},\ldots,\ell_{m}) \in \mathbb{Z}_{\geq 0}^m : \forall i \in [\![1,m]\!] \text{ } \ell_{i} \geq 0 \text{ and } \sum_{i=1}^m \ell_{i} = n-p \}$ \\
\hline
$c_i(a)$ && the number of children of the $i$-th vertex, counted in lexicographic order, in the tree $a$ \\
$\phi_i(a)$ && $|\{u \in [\![1,k]\!] \text{ } | \text{ }c_u(a) = i \}|$ for a tree $a$ with $k$ vertices\\
$R(a)$ && the reduced tree of a tree $a$
\\
$L(a)$ && the sequence of single-child ancestors of a tree $a$ \\
\hline
$\nu(i)$  && $ \mu(i+1)$, $\forall i \geq -1 $ \\
$(X_i)_{i\geq0}$ && a sequence of i.i.d. random variables with distribution $\nu$ \\
$(Y_i)_{i\geq0}$ && a sequence of i.i.d. random variables such that $\mathbb{P}(Y_1=-1) = 0$ and $\forall j \geq 0,$ $\mathbb{P}(Y_1=j) = \nu(j)/(1-\mu(0))$ \\
$W_n$ && $ X_1 +\cdots+X_n$\\
$W'_n$ && $ Y_1 +\cdots+Y_n$\\
\hline
$T,R,L$ && a $\mu$-BGW tree, its reduced tree, its sequence of single-child ancestors  \\
$T^k_{n},R^k_{n}, L^k_{n}$ && a $\mu$-BGW tree conditioned to have $n$ vertices and $k$ internal nodes, $R(T^k_{n}), L(T^k_{n})$ \\
\bottomrule
\end{tabular}
\end{table}
The goal of this section is to study the asymptotic behavior of a $\mu$-BGW tree conditioned to have $n$ vertices and a fixed number of leaves $k \in \mathbb{Z}_{> 0}$, as $n \to \infty$, where the offspring distribution $\mu$ satisfies $\mu(1) > 0$.
We will prove a more general result than Theorem \ref{thcvg_feuilles-cas-binaire}. To do so, we begin by breaking down the structure of a tree with $n$ vertices and $k$ leaves into two components: a reduced tree and a sequence of single-child ancestors. We define these two concepts in the following subsection.

\subsection{Definitions}
 We denote by $T$ a $\mu$-BGW tree and by $T^k_{n}$ such a tree conditioned on having $n$ vertices and $k$ leaves (when this conditioning is non degenerate). In the sequel, $\mathbb{T}_{n}$ will be the set of trees having $n$ vertices, $\mathbb{T}^k$ the set of trees with $k$ leaves and $\mathbb{T}_{n}^k$ the set of trees having $n$ vertices and $k$ leaves. For $ a \in \mathbb{T}_{n}^k$, we denote by $c_u(a)$ the number of children of the $u$-th vertex counted in lexicographic order in $a$ and by $\phi_i(a)$ the size of the set $\{u \in [\![1,k]\!]: c_u(a) = i \}$.
 In the following, we assume that the root of a tree has a parent outgoing edge before its first child without a terminal vertex (such trees are sometimes called planted trees). This assumption makes the upcoming definitions more convenient. Finally, recall that we denote by $|a|$ the number of vertices in $a$ (which is also equal to the number of edges because $a$ is planted).

\begin{definition} The following definitions are illustrated in Figure \ref{Fig1.2}. Let $ a \in \mathbb{T}_{n}^k$.

The reduced tree of $a$, denoted by $R(a)$, is obtained by removing all vertices in $a$ that have exactly one child.

For each vertex $u$ in $R(a)$, we count the number of consecutive single-child vertices in $a$ that lie between $u$ and its parent in $R(a)$, or between $u$ and the root of $a$ if $u$ is the root of $R(a)$. We define the sequence of single-child ancestors of $a$, denoted by
$L(a)$, the sequence of these numbers when $R(a)$ is visited in lexicographic order.
Note that $L(a)$ is an element of $\mathcal{L}^n_{n-\phi_1(a),n-\phi_1(a)}$, where for $ m,p \geq 0, \mathcal{L}^n_{m,p} \coloneqq \bigl\{ (\ell_{1},\ldots,\ell_{m}) \in \mathbb{Z}_{\geq 0}^m : \forall i \in [\![1,m]\!] \text{ } \ell_{i} \geq 0 \text{ and } \sum_{i=1}^m \ell_{i} = n-p \bigr\}$.
\end{definition}

To simplify notation, for a $\mu$-BGW tree $T$ we will denote by $R$ its reduced tree and $L$ its sequence of single-child ancestors. Similarly, for a $\mu$-BGW conditioned to have $n$ vertices and $k$ leaves $T^k_{n}$ we will write $R^k_{n}\coloneqq R(T^k_{n})$ and $L^k_{n}\coloneqq L(T^k_{n})$.

\begin{remark}\label{rqbij1}
The function $a \mapsto (R(a),L(a))$ is a bijection between $\mathbb{T}^k_{n}$ and  $ \bigsqcup_{m=k+1}^{2k-1} \mathbb{T}^k_{m} \times \mathcal{L}^n_{m,m}$.
\end{remark}
One can observe that by Remark \ref{rqbij1}, $R(a)$ and $L(a)$ completely characterize a tree $a$ with $n$ vertices and $k$ leaves. Thus, studying the limit behavior of $T^k_{n}$ is equivalent to studying the joint limit behavior of  $ R^k_{n} $ and $ L^k_{n}$. 

\subsection{Main result}
We denote by $\mathbf{Dir}(1,\ldots,1)$ the Dirichlet distribution with parameter $(1,\ldots,1)$. For $m\in \mathbb{Z}_{\geq2}$, let $ (V_{1}, \ldots, V_{m-1}) $ be the increasing rearrangement of $m-1$ i.i.d. uniform random variables on $[0,1]$. It is well known that  $(V_{1}, V_{2}-V_{1}, \ldots , V_{m-1}-V_{m-2}, 1-V_{m-1}) $ follows the Dirichlet distribution with parameter $(1, \ldots , 1)$. We also recall a classic result which will be used later.

\begin{lemma}\label{thcvgdir}
     Let $m\in \mathbb{Z}_{\geq2}$ and $p\in \mathbb{Z}_{\geq0}$. If $(u_{n,1},\ldots, u_{n,m})$ is uniformly distributed on $\mathcal{L}^n_{m,p}$, then we have the following convergence in distribution \[\frac{\bigl(u_{n,1}, \ldots , u_{n,m}\bigr)}{n} \xrightarrow[n \rightarrow \infty]{(d)} \mathbf{Dir}(1,\ldots,1).\]
\end{lemma}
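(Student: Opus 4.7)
The plan is to reduce the claim to the well-known fact that uniform spacings on the interval $[0,1]$ are Dirichlet-distributed, via the classical stars-and-bars bijection.

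First, I would set up the bijection. Writing $N \coloneqq n-p$, define for a composition $(\ell_1,\ldots,\ell_m) \in \mathcal{L}^n_{m,p}$ the partial sums $S_i \coloneqq \ell_1 + \cdots + \ell_i + i$ for $i \in [\![0,m]\!]$, so that $S_0 = 0$, $S_m = N+m$, and the strict inequalities $0 < S_1 < S_2 < \cdots < S_{m-1} < N+m$ hold (because $\ell_i \geq 0$ forces $S_i \geq S_{i-1}+1$). This gives a bijection between $\mathcal{L}^n_{m,p}$ and the $(m-1)$-subsets of $\{1,\ldots,N+m-1\}$. Consequently, if $(u_{n,1},\ldots,u_{n,m})$ is uniform on $\mathcal{L}^n_{m,p}$, then the corresponding random variables $(S_{n,1},\ldots,S_{n,m-1})$ form a uniformly chosen $(m-1)$-subset of $\{1,\ldots,N+m-1\}$, and for every $i \in [\![1,m]\!]$,
\[u_{n,i} = S_{n,i} - S_{n,i-1} - 1.\]

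Next, I would invoke the standard fact that a uniform $(m-1)$-subset of $\{1,\ldots,N+m-1\}$, rescaled by $N+m$, converges in distribution to the order statistics of $m-1$ i.i.d.\ uniform random variables on $[0,1]$. This can be seen, e.g., by explicit computation of the joint probability mass function (each ordered tuple has probability $1/\binom{N+m-1}{m-1}$, and the joint density of the uniform order statistics equals $(m-1)!$ on the simplex $\{0<v_1<\cdots<v_{m-1}<1\}$), then checking termwise convergence together with a standard uniform-integrability or Scheffé-type argument. Thus, with $V_0 \coloneqq 0$, $V_m \coloneqq 1$, and $(V_1,\ldots,V_{m-1})$ the uniform order statistics,
\[\frac{\bigl(S_{n,1},\ldots,S_{n,m-1}\bigr)}{N+m} \xrightarrow[n\rightarrow\infty]{(d)} (V_1,\ldots,V_{m-1}).\]

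Since $p$ is fixed, $n/(N+m) \to 1$, so by the continuous mapping theorem applied to the spacings map $(v_1,\ldots,v_{m-1}) \mapsto (v_1, v_2-v_1, \ldots, 1-v_{m-1})$ and Slutsky's lemma to absorb the deterministic shift of $-1$ in each coordinate (which contributes $O(1/n)$ after rescaling by $n$),
\[\frac{(u_{n,1},\ldots,u_{n,m})}{n} \xrightarrow[n\rightarrow\infty]{(d)} \bigl(V_1,\,V_2-V_1,\,\ldots,\,V_{m-1}-V_{m-2},\,1-V_{m-1}\bigr).\]

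Finally, as recalled just before the statement of the lemma, the vector of uniform spacings $(V_1,V_2-V_1,\ldots,1-V_{m-1})$ has the Dirichlet distribution with parameter $(1,\ldots,1)$, which concludes the proof. No step is genuinely delicate here; the only point requiring a little care is the sampling-without-replacement convergence to order statistics, but this is entirely standard and can even be bypassed by noting that the joint law of $(u_{n,1},\ldots,u_{n,m})/n$ has discrete density converging uniformly on the simplex to the constant density $(m-1)!$ of $\mathbf{Dir}(1,\ldots,1)$.
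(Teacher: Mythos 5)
Your proof is correct. Note that the paper does not actually prove Lemma \ref{thcvgdir}: it is stated as a recalled classical fact, with only the remark (which you use at the end) that uniform spacings $(V_1,V_2-V_1,\ldots,1-V_{m-1})$ are $\mathbf{Dir}(1,\ldots,1)$-distributed. Your stars-and-bars bijection between $\mathcal{L}^n_{m,p}$ and $(m-1)$-subsets of $\{1,\ldots,n-p+m-1\}$, followed by the standard convergence of a uniform random subset (rescaled) to uniform order statistics and an application of the continuous mapping theorem plus Slutsky to absorb the $O(1/n)$ shifts, is exactly the standard route and fills in the omitted argument; it is also consistent with the paper's implicit use of $|\mathcal{L}^n_{|a|,|a|}|=\binom{n-1}{|a|-1}$ in the proof of Theorem \ref{R_nk_behavior}. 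The only step deserving a word of care is the subset-to-order-statistics convergence, and your Scheff\'e-type justification (pointwise comparison of $1/\binom{N+m-1}{m-1}$ with the limit density $(m-1)!$ on the ordered simplex, interpreted per lattice cell) is adequate for that.
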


Before stating the theorem on the asymptotic behavior of $ R^k_{n} $ and $ L^k_{n}$, let us introduce some notation. We denote by $\mathrm{Supp}(\mu) \coloneqq \{ i \geq 0 : \mu(i) >0 \}$ and by $\mathcal{K}(\mu)$ the set of integers defined as follows \[\mathcal{K}(\mu) \coloneqq \left\{ 1+ \sum_{\substack{i \in \mathrm{Supp}(\mu)\\
              i>0}} b_i(i-1) : b_i \in \mathbb{Z}_{\geq 0} \right\}.\]
Note that a $\mu$-BGW tree conditioned to have $k$ leaves is well-defined if and only if $k \in \mathcal{K}(\mu)$. Among these trees, we will consider those that are maximal in the following sense: $a \in \mathbb{T}^k$ is said to be maximal if $a$ has $b_j$ vertices with $j$ children for $j \in \mathrm{Supp}(\mu)\cap \mathbb{Z}_{\geq 1}$, where $(b_j)_{j \in \mathrm{Supp}(\mu)}$ maximizes the sum $\sum_{j \in \mathrm{Supp}(\mu)} b_j$ under the constraint $(\mathcal{C})$ defined by
\begin{equation*}\label{constraint}
    (\mathcal{C}) \coloneqq``b_1=0 \quad \text{and} \sum_{\substack{i \in \mathrm{Supp}(\mu)\\
              i>0 }}  b_i(i-1) = k-1".
\end{equation*}
Observe that every tree $a \in \mathbb{T}^k$ has $b_0=k$ vertices with no children. We denote by $\mathcal{B}^{\mathrm{max}}$ the set of sequences $\mathbf{b}=(b_j)_{j \in \mathrm{Supp}(\mu)}$ that maximizes $\sum_{j \in \mathrm{Supp}(\mu)} b_j$ under the constraint $(\mathcal{C})$. Such maximizing sequences exists, since only a finite number of sequences satisfy $(\mathcal{C})$. Note that for every $\mathbf{b} \in \mathcal{B}^{\mathrm{max}}$, $b_j =0$ for $j \geq k$. 
For all  $\mathbf{b} \in \mathcal{B}^{\mathrm{max}}$, we denote by $\mathbb{T}^k(\mu, \mathbf{b})$ the set of trees having $k$ leaves and $b_j$ vertices with $j$ children for every $j \in \mathrm{Supp}(\mu)\cap \mathbb{Z}_{\geq 1}$. We also define $\mathbb{T}^k(\mu)$ as the set of all maximal trees \[\mathbb{T}^k(\mu) \coloneqq \bigsqcup_{\mathbf{b} \in \mathcal{B}^{\mathrm{max}}}  \mathbb{T}^k(\mu, \mathbf{b}).\]

For example, if $\mu(2)>0$ then $\mathcal{K}(\mu)=\mathbb{Z}_{\geq 0}$ and $\mathbb{T}^k(\mu) = \{ \text{binary trees with } k \text{ leaves}\}$. If $\mu(2)=0$, let $i_0 \geq 3$ be the smallest integer such that $\mu(i_0)>0$, hence  as $ \mathbb{Z}_{\geq 0}(i_0-1)+1 \subset \mathcal{K}(\mu)$, we deduce that if $k \in  \mathbb{Z}_{\geq 0}(i_0-1)+1 $ then $\mathbb{T}^k(\mu) = \{ i_0\text{-ary trees with } k \text{ leaves}\}$. 

We now state the theorem describing the limiting behavior of  $ R^k_{n} $ and $ L^k_{n}$.

\begin{theorem}\label{R_nk_behavior} Let $\mu$ be an offspring distribution with $\mu(1)>0$ and fix $k \geq2$ with $k \in \mathcal{K}(\mu)$. Then, we have the following convergence in distribution:
\[\Bigl(R_n^k,\frac{L_n^k}{n} \Bigl) \xrightarrow[n \rightarrow +\infty]{(d)} \bigl(R^k, \Delta \bigr)\]
where $R^k$ is a $\mu$-BGW tree conditioned to lie in  $\mathbb{T}^k(\mu)$ and $\Delta$ is a Dirichlet random variable with parameter $(1,\ldots,1)$ independent of $R^k$.
\end{theorem}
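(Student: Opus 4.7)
The plan is to derive the theorem directly from the bijection of Remark 3.2 together with the product formula $\mathbb{P}(T=\tau)=\prod_{u\in\tau}\mu(c_u(\tau))$. If $a\in\mathbb{T}_n^k$ corresponds to $(r,\ell)$ with $|r|=m$, then the $n-m$ vertices of $a$ not in $r$ are single-child ancestors (each contributing $\mu(1)$), while the outdegrees of vertices in $r$ are preserved in passing to the reduced tree. Hence $\mathbb{P}(T=a)=\mu(1)^{n-m}\prod_{u\in r}\mu(c_u(r))$ depends on $a$ only through $r$. Dividing by $\mathbb{P}(T\in\mathbb{T}_n^k)$ and summing over the $\binom{n-1}{m-1}$ sequences $\ell\in\mathcal{L}^n_{m,m}$, I get
\[
\mathbb{P}(R_n^k=r)=\binom{n-1}{m-1}\frac{\mu(1)^{n-m}\prod_{u\in r}\mu(c_u(r))}{\mathbb{P}(T\in\mathbb{T}_n^k)},
\]
and conditionally on $R_n^k=r$ the sequence $L_n^k$ is uniform on $\mathcal{L}^n_{m,m}$.

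Only finitely many reduced trees $r$ yield nonzero probability (their outdegrees must lie in $\mathrm{Supp}(\mu)\cap(\{0\}\cup\mathbb{Z}_{\geq 2})$ and $|r|\leq 2k-1$), so I can compare probabilities pairwise. For two admissible $r,r'$ of sizes $m,m'$, using $\binom{n-1}{m-1}\sim n^{m-1}/(m-1)!$, the ratio $\mathbb{P}(R_n^k=r)/\mathbb{P}(R_n^k=r')$ is asymptotic to a positive constant times $n^{m-m'}$. Consequently the reduced trees that survive in the limit are exactly those maximizing $|r|$ under the constraint $\sum_{j\geq 2}(j-1)b_j(r)=k-1$ with $b_j(r)=0$ for $j\notin\mathrm{Supp}(\mu)$, which by definition is $\mathbb{T}^k(\mu)$. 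All elements of $\mathbb{T}^k(\mu)$ share the same size $m^{\max}$ and the same product $\prod_{u\in r}\mu(c_u(r))$, so the ratio of any two such probabilities tends to $1$; combining gives $\mathbb{P}(R_n^k=r)\to 1/|\mathbb{T}^k(\mu)|$ for $r\in\mathbb{T}^k(\mu)$ and $\to 0$ otherwise, which is the marginal convergence $R_n^k\xrightarrow{(d)}R^k$.

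For the joint statement, Lemma \ref{thcvgdir} applied to the uniform law of $L_n^k$ on $\mathcal{L}^n_{|r|,|r|}$ yields $L_n^k/n\xrightarrow{(d)}\mathbf{Dir}(1,\ldots,1)$ conditionally on any admissible $r$. Because every $r\in\mathbb{T}^k(\mu)$ has the common length $m^{\max}$, the limiting Dirichlet law has a dimension that does not depend on the limit reduced tree, which is exactly the asserted independence; testing against any bounded continuous $g(r,x)$ and summing over the finitely many admissible $r$ upgrades these two convergences to a joint one. A preliminary worth checking is non-degeneracy of the conditioning, i.e.\ $\mathbb{P}(T\in\mathbb{T}_n^k)>0$ for large $n$, but since $\mu(1)>0$ and $k\in\mathcal{K}(\mu)$ one may graft $n-m^{\max}$ single-child ancestors onto any fixed element of $\mathbb{T}^k(\mu)$. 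The substantive step of the whole argument is the asymptotic ratio comparison via binomial coefficients, and I do not anticipate any serious obstacle beyond careful bookkeeping.
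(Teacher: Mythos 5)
Your proof is correct, and for the central step it takes a genuinely lighter route than the paper. Both arguments share the same skeleton: the identity $\mathbb{P}(R_n^k=r)=\binom{n-1}{|r|-1}\,\mu(1)^{n-|r|}\prod_{u\in r}\mu(c_u(r))/\mathbb{P}(T\in\mathbb{T}^k_n)$ (Equation \eqref{formuleR_nk}), the observation that conditionally on $R_n^k=r$ the sequence $L_n^k$ is uniform on $\mathcal{L}^n_{|r|,|r|}$, and Lemma \ref{thcvgdir} for the Dirichlet limit. Where you diverge is in how the limit of $\mathbb{P}(R_n^k=r)$ is extracted: the paper evaluates the denominator $\mathbb{P}(T\in\mathbb{T}^k_n)$ asymptotically, via the \L ukasiewicz-path encoding, the cycle lemma (Proposition \ref{consequencecyclelemma}), a multinomial expansion whose dominant term is indexed by $b^{\mathrm{max}}$, and the enumeration formula for plane trees with prescribed degrees; you never touch the denominator at all, noting instead that $R_n^k$ is supported on the finite set of trees with $k$ leaves, no unary vertices, outdegrees in $\mathrm{Supp}(\mu)$ and at most $2k-1$ vertices, comparing any two such trees through the ratio of the explicit numerators (of order $n^{|r|-|r'|}$), and concluding by normalization that the mass concentrates uniformly on the size-maximizing trees, i.e. on $\mathbb{T}^k(\mu)$. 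Your route avoids the conditioned-random-walk machinery and the counting formula entirely and is arguably more elementary; what it gives up is the explicit asymptotic equivalent for $\mathbb{P}(T\in\mathbb{T}^k_n)$ that the paper's computation yields as a by-product. Two implicit points you rely on are exactly the ones the paper also relies on, so they are not gaps relative to it: the claim that all size-maximizing reduced trees carry the same weight $\prod_{u\in r}\mu(c_u(r))$ is precisely the asserted uniqueness of the maximizing sequence $b^{\mathrm{max}}$, and your non-degeneracy check tacitly uses $\mu(0)>0$. The upgrade from the marginal and conditional limits to the joint limit with independence is handled as in the paper and works because all trees of $\mathbb{T}^k(\mu)$ have the same number of vertices.
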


As a result of the previous discussion, we can mention two interesting particular cases of the first part of Theorem \ref{R_nk_behavior}: If $\mu$ is an offspring distribution with $\mu(2)>0$, then $R_{n}^k$ converges in distribution as $n \rightarrow \infty$ to a uniform binary tree with $k$ leaves (this is Theorem \ref{thcvg_feuilles-cas-binaire}).
Moreover, it is clear that if $\mu(2)=0$ this cannot hold, as this would imply that $\mathbb{P}(R^k_{n} = b) = 0$ for all binary trees $b$. If $\mu(2)=0$, let $i_0 \geq 3$ be the smallest integer such that $\mu(i_0)>0$. If $k \in \mathbb{Z}_{\geq 0}(i_0-1)+1$, then $R_{n}^k$ converges in distribution as $n$ tends to infinity to a uniform random $i_0$-ary tree with $k$ leaves.

\begin{remark}
    One possible approach to prove the convergence of $R_n^k$ in Theorem \ref{R_nk_behavior} would be to show that the ratio $\mathbb{P}(R_n^k=\tau_1)/\mathbb{P}(R_n^k=\tau_2)$ tends to $0$ when $\tau_2 \in \mathbb{T}^k(\mu)$ but $\tau_1 \notin \mathbb{T}^k(\mu)$ and converges to $\prod_{j = 2}^k\mu(j)^{\phi_j(\tau_1)}/\prod_{j = 2}^k\mu(j)^{\phi_j(\tau_2)}$ when $\tau_1,\tau_2 \in \mathbb{T}^k(\mu)$. We nevertheless choose a different approach for two main reasons. First, the proof presented below relies on the coding by the \L ukasiewicz path, a technique that will also be used later in the proof of Theorem \ref{th1-intro}.
Second, this method yields an asymptotic equivalent for $\mathbb{P}(T \in \mathbb{T}_n^k)$, see Equation \eqref{eqT}, which is also interesting on its own.
\end{remark}
\begin{proof} We begin by proving that $R_n^k$ converges in distribution as $n$ tends to infinity to a uniform random tree in $\mathbb{T}^k(\mu)$.
Fix $a \in \mathbb{T}^k(\mu)$. Then, $\phi_1(a) = 0$ and we have 
\begin{equation}\label{formuleR_nk}
\mathbb{P}(R^k_{n} = a)=\frac{\mathbb{P}(R = a , T \in \mathbb{T}^k_{n})}{\mathbb{P}(T \in \mathbb{T}^k_{n})} = \binom{n-1}{|a|-1} \frac{\mu(0)^{k}\mu(1)^{n-|a|}}{\mathbb{P}(T \in \mathbb{T}^k_{n})}  \prod_{j = 2}^k\mu(j)^{\phi_j(a)}.
\end{equation}
Indeed, the number of trees with $n$ vertices and $k$ leaves that have $a$ as their reduced tree is equal to the number of ways to fill the $|a|$ edges of $a$ with $n-|a|$ vertices with a single child, which is equal to $\binom{n-1}{|a|-1}$.
By using the coding of $T$ by its \L ukasiewicz path (Proposition \ref{propcodingluka}), if $W$
denotes the random walk starded from $W_0=0$ and with i.i.d. increments $(X_{i})_{i \in \mathbb{Z}_{\geq 0}}$ distributed according to $\nu$ defined by $ \nu(i) =\mu(i+1) $ for all $ i \geq -1$, then $\mathbb{P}(T \in \mathbb{T}^k_{n})$ is equal to \[\mathbb{P}\Bigl(W_{n}=-1,  W_{i} \geq 0 \,\forall i \in [\![1,n-1]\!], \bigl|\{i \in [\![0,n-1]\!] \text{: }  X_{i}=-1\}\bigr|=k\Bigr).\]
Then, applying Proposition \ref{consequencecyclelemma}, we deduce that this probability is equal to $ \mathbb{P}(W_{n}=-1, |\{i \text{ : } X_i =-1\}|= k) / n$, which is equal to
\begin{align*}
&\frac{1}{n}\sum_{\substack{ I \subset [\![1,n]\!]\\
              |I|=k}}  \mathbb{P}\Biggl( X_i = -1 \text{ } \forall i \in I , \sum_{j \notin I} X_j = k-1, X_i \neq -1 \text{ } \forall i \notin I \Biggr) \\
    & = \frac{1}{n}\sum_{\substack{ I \subset [\![1,n]\!]\\
              |I|=k}}  \mathbb{P}\Bigl( X_i = -1 \text{ } \forall i \in I \Bigr) \mathbb{P}\Biggl(\sum_{j \notin I} X_j = k-1, X_i \neq  -1 \text{ } \forall i \notin I \Biggr) \\
    & = \frac{1}{n}\binom{n}{k} \mu(0)^k (1-\mu(0))^{n-k} \mathbb{P}\Biggl(\sum_{j =1}^{n-k} X_j = k-1 \,\Big|\, X_i \neq  -1 \text{ } \forall 1\leq i \leq n-k \Biggr),
\end{align*}
which is equal to $\mathbb{P}(B_{n}=k)\mathbb{P}(W'_{n-k} = k-1)/n$, where $B_{n}$ is the sum of $n$ independent Bernoulli random variables with parameter $\mu(0)$ and $W'$ is a random walk starting at $0$ with i.i.d. increments $(Y_{i})_{i \in \mathbb{Z}_{\geq 0}}$ such that $\mathbb{P}(Y_1 = i) =\nu(i)/(1-\mu(0))$ for all $ i \geq 0$. In other words, $W'$ is equal in distribution to the random walk $W$ conditioned to have only non negative steps. Thus, we have \[\mathbb{P}\bigl(T \in \mathbb{T}^k_{n}\bigr) = \frac{1}{n}\mathbb{P}\bigl(B_{n}=k\bigr)\mathbb{P}\bigl(W'_{n-k} = k-1\bigr).\]
By summing over all possible increments, we get that $\mathbb{P}(W'_{n-k} = k-1)$ is equal to
\[\sum_{\substack{0\leq y_{1},\ldots,y_{n-k} \leq k-1\\ y_{1}+\cdots+y_{n-k}=k-1}} \mathbb{P}(Y_{1} = y_{1},\ldots,Y_{n-k} = y_{n-k}) = \frac{1}{(1-\mu(0))^{n-k}} \sum_{\substack{0\leq y_{1},\ldots,y_{n-k} \leq k-1\\ y_{1}+\cdots+y_{n-k}=k-1}} \mu(y_{1}+1)\cdots\mu(y_{n-k}+1).\]
Therefore, we have
\begin{equation}\label{eqRfeuilles}
     \mathbb{P}(T \in \mathbb{T}^k_{n})= \frac{\mu(0)^k}{n} \binom{n}{k} \sum_{\substack{0\leq y_{1},\ldots,y_{n-k} \leq k-1\\ y_{1}+\cdots+y_{n-k}=k-1}} \mu(y_{1}+1)\cdots\mu(y_{n-k}+1).
\end{equation}
 The sum appearing in Equation \eqref{eqRfeuilles} is equal to
\begin{align*}
& \sum_{\substack{b_{1}+\cdots+b_{k}=n-k\\
               2b_{2}+3b_{3}+\cdots+kb_{k}=n-1}} \binom{n-k}{b_{1},\ldots,b_{k}} \mu(1)^{b_{1}}\cdots\mu(k)^{b_{k}} \\ 
               &=  \sum_{p=1}^{k-1}\sum_{\substack{b_2+\cdots+b_{k}=p\\
               b_{2}+2b_{3}+\cdots+(k-1)b_{k}=k-1}} \binom{n-k}{n-k-p, b_2,\ldots,b_{k}} \mu(1)^{n-k-p} \mu(2)^{b_2}\cdots\mu(k)^{b_{k}} \\
               &= \sum_{p=1}^{k-1}\sum_{\substack{(b_j)_{j\in \mathrm{Supp}(\mu)\cap [\![2,k]\!]} \\\sum b_j=p\\
               \sum (j-1)b_j=k-1}} \binom{n-k}{n-k-p, b_2,\ldots,b_{k}} \mu(1)^{n-k-p} \mu(2)^{b_2}\cdots\mu(k)^{b_{k}}
\end{align*}
where $\binom{n-k}{n-k-\sum b_j, b_2,\ldots,b_{k}}$ is asymptotically equivalent to $(b_2!\cdots b_{k}!)^{-1}n^{b_2+\cdots+b_k}$ and all the sums have a finite number of terms. Let $\mathbf{b}\coloneqq(b_j)_{j \in \mathrm{Supp}(\mu)} \in \mathcal{B}^{\mathrm{max}}$ and $p_{\mathrm{max}} \coloneqq \sum_{j\geq 2} b_j$. Then, we get the following equivalent:
\begin{equation*}
\sum_{\substack{0\leq y_{1},\ldots,y_{n-k} \leq k-1\\ y_{1}+\cdots+y_{n-k}=k-1}} \mu(y_{1}+1)\cdots\mu(y_{n-k}+1)\underset{n \to +\infty}{\sim} \sum_{\mathbf{b} \in \mathcal{B}^{max}}\frac{n^{p_{\mathrm{max}}}}{b_2! \cdots b_{k}!}\mu(1)^{n-k-p_{\mathrm{max}}} \prod_{j=2}^k \mu(j)^{b_j}.
\end{equation*}
 Recall that the number of vertices in $a$ is equal to $|a| = k+p_{\mathrm{max}}$ for $a \in \mathbb{T}^k(\mu)$. Then, using Equation \eqref{eqRfeuilles}, we get that
\begin{equation} \label{eqT}
    \mathbb{P}(T \in \mathbb{T}^k_{n}) \underset{n \to +\infty}{\sim} \frac{\mu(0)^k}{n}\binom{n}{k} n^{p_{\mathrm{max}}} \mu(1)^{n-k-p_{\mathrm{max}}} \sum_{\mathbf{b} \in \mathcal{B}^{\mathrm{max}}} \frac{1}{b_2! \cdots b_{k}!}\prod_{j=2}^k \mu(j)^{b_j}.
\end{equation}
Now, combining Equations \eqref{formuleR_nk} and \eqref{eqT}, we obtain the following equivalent

\begin{equation} \label{eqR}
    \mathbb{P}(R^k_n =a) \underset{n \to +\infty}{\sim} \frac{k!}{(k+p_{\mathrm{max}}-1)!} \frac{\prod_{j=2}^k \mu(j)^{\phi_j(a)}}{\sum_{\mathbf{b} \in \mathcal{B}^{\mathrm{max}}} \frac{1}{b_2! \cdots b_{k}!}\prod_{j=2}^k \mu(j)^{b_j}}.
\end{equation}
Moreover by Equation (6.19) of \cite{PC06} giving the number of plane trees with prescribed degrees $\mathbf{b}$ and $k$ leaves, we have that \[\bigl| \mathbb{T}^k(\mu,\mathbf{b})\bigr| = \frac{(k+p_{{\mathrm{max}}}-1)!}{k!b_2! \cdots b_{k}!}.\] 
Consequently, we can rewrite the equivalent in Equation \eqref{eqR} as follows:
\begin{equation} \label{eqR}
    \mathbb{P}(R^k_n =a) \underset{n \to +\infty}{\sim}  \frac{\prod_{j=2}^k \mu(j)^{\phi_j(a)}}{\sum_{\mathbf{b} \in \mathcal{B}^{\mathrm{max}}} \bigl| \mathbb{T}(\mu,\mathbf{b})\bigr|\prod_{j=2}^k  \mu(j)^{b_j}},
\end{equation}
and conclude that $R^k_n$ converges in distribution as $n$ tends to infinity to a $\mu$-BGW tree conditioned to lie in $\mathbb{T}^k(\mu)$.

Now, let us deduce Theorem \ref{R_nk_behavior}. Set $a \in \mathbb{T}^k(\mu)$. From the preceding, it is enough to prove that conditionally on the event $\{R^k_{n}=a\}$, $L^k_{n}$ has the uniform distribution on $\mathcal{L}^n_{|a|,|a|}$ and then apply Lemma \ref{thcvgdir} to obtain the desired result. Fix $\ell = (\ell_{1},\ldots,\ell_{|a|}) \in \mathcal{L}^n_{|a|,|a|}$, and let us prove that \[ \mathbb{P}\bigl(L^k_{n}= \ell \,\big| \, R^k_{n}=a \bigr) = \frac{1}{|\mathcal{L}^n_{|a|,|a|}|} = \frac{1}{\binom{n-1}{|a|-1}}.\]
On the one hand, by Equation \eqref{formuleR_nk}, we have that $\mathbb{P}(R^k_{n}= a)$ is equal to \[ \binom{n-1}{|a|-1} \frac{\mu(0)^{k}\mu(1)^{n-|a|}}{\mathbb{P}(T \in \mathbb{T}^k_{n})}  \prod_{i = 2}^k\mu(i)^{\phi_i(a)},\] on the other hand we have that $\mathbb{P}(L^k_n= \ell , R^k_{n} = a )$ is equal to \[ \frac{\mathbb{P}(L=\ell,R=a,T \in \mathbb{T}^k_{n})}{\mathbb{P}(T \in \mathbb{T}^k_{n})} = \frac{\mu(0)^{k}\mu(1)^{n-|a|}}{\mathbb{P}(T \in \mathbb{T}^k_{n})}  \prod_{i = 2}^k\mu(i)^{\phi_i(a)},\]
so the proof is complete using $\mathbb{P}(L^k_{n}= \ell \mid R^k_{n}=a) = \mathbb{P}(L^k_{n}= \ell, R^k_{n}=a)/\mathbb{P}(R^k_{n}=a)$.
\end{proof}

\section{BGW trees with a fixed number of internal nodes}\label{sec4}
In this section, we focus on BGW trees with $n$ vertices and $k$ internal nodes, where both $n$ and $k$ are fixed. In Sections \ref{sec5}$-$\ref{sec8}, we will then use these results to study the limiting behavior of BGW trees as $n$ tends to infinity. 
\subsection{Definitions}
\begin{table}[htbp]\caption{Table of the main notation and symbols introduced in Subsection $4.1$ and used later.}
\centering
\begin{tabular}{c c p{12cm} }
\toprule
$\mathbb{T}_{i}$ && the set of trees with $i$ vertices\\
$\mathbb{T}_{i,j}$ && the set of trees with $i$ vertices and $j$ internal nodes  \\
\hline
$R(a), L(a)$ && the reduced tree of a tree $a$, the sequence of leaves of $a$ \\
$T,R,L$ && a $\mu$-BGW tree, its reduced tree, its sequence of leaves \\
$T_{n,k}, R_{n,k}, L_{n,k} $ && the $\mu$-BGW tree conditioned to have $n$ vertices and $k$ internal nodes, $ R(T_{n,k})$, $ L(T_{n,k})$  \\
\bottomrule
\end{tabular}
\end{table}
 We denote by $T$ a $\mu$-BGW tree and by $T_{n,k}$ such a tree conditioned on having $n$ vertices and $k$ internal vertices (when this conditioning is non degenerate). From now on, $\mathbb{T}_{n,k}$ will be the set of trees having $n$ vertices and $k$ internal nodes and we recall that $\mathbb{T}_{k}$ is the set of trees with $k$ vertices. Recall that for $ a \in \mathbb{T}_{k}$, we denote by $c_u(a)$ the number of children of the $u$-th vertex counted in lexicographic order in $a$ and by $\phi_i(a)$ the size of the set $\{u \in [\![1,k]\!]: c_u(a) = i \}$. As in the previous section, we assume that the root of a tree has a parent edge before its first child.
 
 Following the same approach as in Section \ref{sec3}, we begin by decomposing the structure of a tree in $\mathbb{T}_{n,k}$ into two components: a reduced tree and a sequence of leaves, as illustrated in Figure \ref{Fig0.2}.
 
 \begin{definition} Let $ a \in \mathbb{T}_{n,k}$. The reduced tree of $a$, denoted by $R(a)$, is the tree obtained by removing all leaves from $a$. Note that $R(a) \in \mathbb{T}_{k}$.
\end{definition}

To simplify notation, for a $\mu$-BGW tree $T$ we will denote by $R$ its reduced tree and for a $\mu$-BGW conditioned to have $n$ vertices and $k$ internal nodes $T_{n,k}$ we will write $R_{n,k}$ for its reduced tree.

\begin{definition}
    Let $ a \in \mathbb{T}_{n,k}$. The internal corners of $a$ are the angular sectors defined by the parent edges of the internal nodes. Note that the number of internal corners in $a$ is equal to $\sum_{u \in R(a)}(c_u(a)+1) = 2k-1$.
\end{definition}

We have to choose an order on the internal corners of a tree to be able to list them. The chosen order is as follows: We visit the internal nodes of the tree in lexicographical order, starting with the first internal node: the root. We visit the internal corners of the root in the clockwise direction, starting from the parent edge. Once finished, we move on to the second internal node. We repeat this process until all internal nodes have been seen (Figure \ref{Fig0.2} gives an example). Recall that $  \mathcal{L}^n_{m,p}$ denotes the set $ \bigl\{ (\ell_{1},\ldots,\ell_{m}) \in \mathbb{Z}_{\geq 0}^m : \forall i \in [\![1,m]\!] \text{ } \ell_{i} \geq 0 \text{ and } \sum_{i=1}^m \ell_{i} = n-p \bigr\}$, where $m$ and $p$ are non-negative integers.

\begin{definition} Let $ a \in \mathbb{T}_{n,k}$. The sequence of leaves of $a$, denoted by $L(a)\coloneqq(L_1(a),\ldots,L_{2k-1}(a))$, is a sequence of length $2k-1$ such that for every $i$ in $[\![1,2k-1]\!]$, the $i$-th element of the sequence, equals the number of leaves grafted in the $i$-th internal corner of $a$ if the internal corner surrounds an internal node of $R(a)$ or that number minus one if it surrounds a leaf of $R(a)$. Note that $L(a)$ is an element of $\mathcal{L}^n_{2k-1,k+\phi_0(R(a))}$.
\end{definition}

\begin{remark}\label{rqbij2}
The function $a \mapsto (R(a),L(a))$ is a bijection between $\mathbb{T}_{n,k}$ and pairs $(r,l) \in \mathbb{T}_{k} \times \mathcal{L}^n_{2k-1,k+\phi_0(r)}$. Observe that the reason why we subtract one from the number of leaves at a corner of $a$ if it is a leaf of $R(a)$ is because by the definition of $R(a)$, all its vertices are internal nodes of $a$, meaning each one must have at least one leaf in $a$.
\end{remark}
To simplify notation, we set $L \coloneqq L(T)$, $L_{n,k}\coloneqq L(T_{n,k})$.
Observe that by Remark \ref{rqbij2}, $R(a)$ and $L(a)$ completely characterize a tree $a$ with $n$ vertices and $k$ internal nodes. So studying the limit behavior of $T_{n,k}$ is equivalent to studying the joint limit behavior of  $ R_{n,k} $ and $ L_{n,k}$.

\subsection{Description via random walks}
This subsection presents a key result that will serve as the starting point for the proof of Theorem \ref{th1-intro} in Section \ref{sec5}.
\begin{table}[htbp]\caption{Table of the main notation and symbols introduced in Subsection $4.2$ and used later.}
\centering
\begin{tabular}{c c p{12cm} }
\toprule
$\mathbf{x}$ && $(x_1,\ldots,x_n)$ \\ 
$\|\mathbf{x}\|$ && $\max_{1\leq i \leq n}{x_i}$ \\
$\mathbf{x}^{\downarrow}_{i}$ && the integer obtained by sorting $\mathbf{x}$ in non-increasing order and then keeping only the $i$-th coordinate \\
$\mathbf{x}^{\downarrow}_{i:j}$ && the vector obtained by sorting $\mathbf{x}$ in non-increasing order, and then keeping only elements with index belonging to $ [\![i,j]\!] $ \\
\hline
$\ast_k$ && the tree with $1$ internal vertex and $k-1$ leaves \\
$\varnothing_a$ && the root of a tree a \\
$c^{o}_i(a)$ && the number of children of the $i$-th internal vertex, counted in lexicographic order, in $a$ \\
$\mathbf{K}(a)$ && $ (c_0(a)-1,\ldots,c_{n-1}(a)-1)$ with $a \in \mathbb{T}_{n}$ \\
\bottomrule
\end{tabular}
\end{table}

We first introduce some notation. For $a \in \mathbb{T}_{n,k}$, we denote by $\varnothing_a$ the root of a, $c^{o}_i(a)$ the number of children of the $i$-th internal vertex counted in lexicographic order in $a$ and $\mathbf{K}(a)$ the vector $(c_0(a)-1,\ldots,c_{n-1}(a)-1)$. For a vector $\mathbf{x} = (x_1,\ldots,x_p)$, we denote by $\mathbf{x}^{\downarrow} =(\mathbf{x}^{\downarrow}_{i},1 \leq i\leq p)$ the non increasing rearrangement of $\mathbf{x}$ and $\mathbf{x}^{\downarrow}_{i:j}$ the vector $(\mathbf{x}^{\downarrow}_{k}; i \leq k \leq j)$. We also note $\|\mathbf{x}\|$ the quantity $\max_{1\leq i \leq n}{x_i} = \mathbf{x}^{\downarrow}_{1}$. Then, as in the previous part, we consider a sequence $(X_i)_{i \geq 0}$ of i.i.d. random variables with distribution $\nu = \mu(\cdot +1)$ on $\mathbb{Z}_{\geq -1}$, $(Y_i)_{i\geq 0}$ a sequence $(Y_i)_{i \geq 0}$ of i.i.d. random variables such that $\mathbb{P}(Y_1=-1) = 0$ and $\mathbb{P}(Y_1=j) = \mu(j+1)/(1-\mu(0))$ for all $j \geq 0$, and two random walks $(W_i)_{i\geq0}$ and $(W'_i)_{i\geq0}$ defined by  $W_0=W'_0=0$ and $ W_i= X_1+\cdots+X_i$ and $W'_i = Y_1+\cdots+Y_i$ for all $i\geq 0$. Finally, we denote by $\ast_k$ the tree with $1$ internal vertex and $k-1$ leaves.

The following proposition essentially tells that the outdegrees of the internal nodes of $T_{n,k}$ are equal to the increments of a conditioned random walk.

\begin{proposition}\label{prop2.3} We have the following equality in distribution:
    \[\mathbf{K}^{\downarrow}_{1:k}(T_{n,k}) \quad \overset{(d)}{=} \quad \mathbf{Y}^{\downarrow}_{1:k} \text{ under } \mathbb{P}\biggl(\cdot \,\Big|\sum_{j=1}^k Y_j=n-k-1\biggr).\]
\end{proposition}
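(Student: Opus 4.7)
The plan is to follow the same template as the proof of Theorem \ref{R_nk_behavior}: encode $T_{n,k}$ by its \L ukasiewicz path, apply the cycle lemma to drop the positivity constraint, and then separate the contributions of the $-1$ increments from the non-negative ones.

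First, by Propositions \ref{bij_arbre_luka} and \ref{propcodingluka}, the outdegrees-minus-one vector $\mathbf{K}(T_{n,k}) = (c_0(T_{n,k})-1, \ldots, c_{n-1}(T_{n,k})-1)$ has the distribution of $(X_1, \ldots, X_n)$ under the conditioning event
\[ E_{n,k} \coloneqq \bigl\{W_n = -1,\ W_i \geq 0 \text{ for } i \in [\![1, n-1]\!],\ |\{i : X_i = -1\}| = n-k \bigr\},\]
since the leaves of $T_{n,k}$ correspond exactly to increments $X_i = -1$ and there are $n-k$ of them. On this event, the $k$ largest entries $\mathbf{X}^{\downarrow}_{1:k}$ are precisely the non-negative ones (the remaining $n-k$ being equal to $-1$), so they fully encode the outdegrees of the internal vertices.

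Second, for any $B \subset \mathbb{Z}_{\geq 0}^k$, the set of $(x_1, \ldots, x_n) \in \mathbb{Z}^n$ with $\mathbf{x}^{\downarrow}_{1:k} \in B$ and $|\{i : x_i = -1\}| = n-k$ is a symmetric function of its coordinates, hence stable by cyclic permutation. Proposition \ref{consequencecyclelemma} therefore gives
\[\mathbb{P}\bigl(\mathbf{X}^{\downarrow}_{1:k} \in B,\ E_{n,k}\bigr) = \frac{1}{n}\, \mathbb{P}\bigl(\mathbf{X}^{\downarrow}_{1:k} \in B,\ W_n = -1,\ |\{i : X_i = -1\}| = n-k\bigr),\]
and analogously for the normalising constant (take $B = \mathbb{Z}_{\geq 0}^k$), so that the factor $1/n$ cancels in the conditional probability.

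Finally, I condition on the subset $I \subset [\![1, n]\!]$ of size $k$ of indices where $X_i \geq 0$. Summing over the $\binom{n}{k}$ such $I$ and using that the law of $X_i$ conditional on $\{X_i \geq 0\}$ is exactly the law of $Y_1$, the probability above factorises as
\[\binom{n}{k} \mu(0)^{n-k} (1-\mu(0))^k \, \mathbb{P}\Bigl(\mathbf{Y}^{\downarrow}_{1:k} \in B,\ Y_1 + \cdots + Y_k = n-k-1\Bigr),\]
the sum condition arising because $W_n = -1$ together with $n-k$ increments equal to $-1$ forces the remaining $k$ to sum to $n-k-1$. Taking the ratio with the normalising constant, the combinatorial prefactor $\binom{n}{k}\mu(0)^{n-k}(1-\mu(0))^k$ cancels and one obtains precisely the conditional law of $\mathbf{Y}^{\downarrow}_{1:k}$ given $\sum_{j=1}^k Y_j = n-k-1$, which is the claim. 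I do not foresee any serious obstacle: the cycle lemma has already been used in exactly this form in the proof of Theorem \ref{R_nk_behavior}, and the only delicate point is verifying that \emph{``the sorted top-$k$ entries lie in $B$''} defines a symmetric (hence cyclically invariant) event in the increments, which is immediate.
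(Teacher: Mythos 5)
Your proposal is correct and follows essentially the same route as the paper's proof: Łukasiewicz encoding, the cycle lemma via Proposition \ref{consequencecyclelemma} applied to the (symmetric, hence cyclically invariant) event on the increments, then splitting over the set of indices of the non-negative increments so that the prefactor $\binom{n}{k}\mu(0)^{n-k}(1-\mu(0))^k/n$ cancels in the ratio. The only cosmetic difference is that you work with a general set $B\subset\mathbb{Z}_{\geq 0}^k$ while the paper fixes a single sorted vector $\mathbf{u}$; both yield the stated conditional law of $\mathbf{Y}^{\downarrow}_{1:k}$.
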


\begin{proof}
For $T \in \mathbb{T}_{n,k}$, we recall that $c_{i}(T)$ is the number of children of the $i$-th vertex is $T$ and $\mathbf{K}(T)= (c_0(T)-1,\ldots,c_{n-1}(T)-1)$ so $\mathbf{K}^{\downarrow}_{i}(T) \geq 0$ if $i \leq k$ and $\mathbf{K}^{\downarrow}_{i}(T)= -1$ otherwise. Let $ \mathbf{u} \coloneqq (u_1,\ldots,u_k) \in \mathbb{Z}_{\geq 0}^k$ be such that $u_1 \geq \cdots \geq u_k \geq 0$ and $u_1 + \cdots + u_k = n-k-1 $. We denote by $\mathbf{X}$ the vector $(X_1,\ldots,X_n)$, made of i.i.d. random variables with law $\nu$. Then, the coding of $T$ by its \L{ukasiewicz} path (Proposition \ref{propcodingluka}) shows that $\mathbb{P}(\mathbf{K}^{\downarrow}_{1:k}(T)= \mathbf{u}, T \in \mathbb{T}_{n,k})$ is equal to 
 \[\mathbb{P}\Bigl(\mathbf{X}^{\downarrow}_{1:k} = \mathbf{u} , W_n = -1 , W_j \geq 0 \,\forall i \in [\![1,n-1]\!]  ,  |\{i \text{ : } X_i =-1\}|= n-k \Bigr)\]
and applying Proposition \ref{consequencecyclelemma}, we get that this is equal to $ \mathbb{P}(\mathbf{X}^{\downarrow}_{1:k} = \mathbf{u}, |\{i \text{ : } X_i =-1\}|= n-k) / n$. Consequently, this probability is equal to
\[\frac{1}{n}\sum_{\substack{ I \subset [\![1,n]\!]\\
              |I|=n-k}}  \mathbb{P}\Bigl( \forall i \in I \, X_i = -1 , (X_j : j \notin I)^{\downarrow} = \mathbf{u} \Bigr) = \frac{1}{n}\binom{n}{n-k} \mu(0)^{n-k}\mathbb{P}\Bigl( (X_1,\ldots,X_k)^{\downarrow} = \mathbf{u} \Bigr)\]
which is equal to \[\frac{1}{n}\binom{n}{n-k} \mu(0)^{n-k} (1-\mu(0))^{k} \mathbb{P}\Bigl( (Y_1,\ldots,Y_k)^{\downarrow} = \mathbf{u} \Bigr).\]
By summing over all possible vectors $\mathbf{u}$, we also have
\begin{align*}
  \mathbb{P}\Bigl(T \in  \mathbb{T}_{n,k} \Bigr)&= \frac{1}{n}\sum_{\substack{ I \subset [\![1,n]\!] \text{ ,}\\
              |I|=n-k}}  \mathbb{P}\bigg(\forall i \in I\text{ }  X_i = -1 , \sum_{j\notin I} X_j = n-k-1, \forall j \notin I \text{ } X_j \neq -1 \biggr) \\
    & = \frac{1}{n} \binom{n}{n-k} \mu(0)^{n-k} (1-\mu(0))^{k} \text{ } \mathbb{P}\biggl( \sum_{j=1}^k Y_j = n-k-1  \biggr).
\end{align*}
Hence, $\mathbb{P}(\mathbf{K}^{\downarrow}_{1:k}(T_{n,k}) = \mathbf{u} ) = \mathbb{P}(\mathbf{K}^{\downarrow}_{1:k}(T) = \mathbf{u} \,|\, T \in  \mathbb{T}_{n,k} )$ is equal to 
\[\frac{\binom{n}{n-k} \mu(0)^{n-k} (1-\mu(0))^{k} \text{ } \mathbb{P}\bigl( \mathbf{Y}^{\downarrow}_{1:k} = \mathbf{u}, \sum_{j=1}^k Y_j = n-k-1 \bigr)}{\binom{n}{n-k} \mu(0)^{n-k} (1-\mu(0))^{k} \text{ } \mathbb{P}\bigl( \sum_{j=1}^k Y_j = n-k-1 \bigr)}\]
which is equal to $\mathbb{P}( \mathbf{Y}^{\downarrow}_{1:k} = \mathbf{u} \mid \sum_{j=1}^k Y_j = n-k-1)$.
\end{proof}

\subsection{Description via generating functions}
This subsection introduces a set of tools and a key result that will serve as a common starting point for the proofs of Theorems \ref{th3.2}, \ref{th_cvg_transfert_case-intro} and \ref{thPois-intro} respectively detailed in Sections \ref{sec6}, \ref{sec7} and \ref{sec8}.
\begin{table}[htbp]\caption{Table of the main notation and symbols introduced in Section $4.3$ and used later.}
\centering
\begin{tabular}{c c p{12cm} }
\toprule
$I(a)$ && the set of internal nodes in $a \in \mathbb{T}_{k}$\\
$\Tilde{c}_u(a)$ && $c_u(a)$ if $u$ is not a leaf of $a$, $1$ otherwise\\
$\zeta_k(a)$ && $(\prod_{u \in I(a)} c_u(b)!)^{-1}$, $a \in \mathbb{T}_k$ \\
\hline
$\widetilde{F}_{\mu}$ && the generating function of $\mu(\cdot+1)$  \\
$G^a_{\mu} (z)$ && $\prod_{u \notin I(a)}\widetilde{F}_{\mu} (z)\prod_{u \in I(a)} F_{\mu}^{(c_u(a))}(z)$\\
\bottomrule
\end{tabular}
\end{table}

For $a \in \mathbb{T}_k$, we denote by $I(a)$ the set of internal nodes in $a$, and for a vertex $u$ of $a$ we define $\Tilde{c}_u(a)$ as follows: 
\[\Tilde{c}_u(a) \coloneqq \left\{
    \begin{array}{ll}
        c_u(a) & \mbox{if } u \mbox{ is not a leaf of } a\\
        1 & \mbox{otherwise.} 
    \end{array}
\right.\]
We recall that for $i \in \mathbb{Z}_{\geq 0}$, $\phi_i(a)$ is the number of vertices in $a$ with $i$ children. We denote by $F_{\mu}^{(j)}$ the $j$-th derivative of $F_{\mu}$ and $\widetilde{F}_{\mu}$ the generating function of $\mu(\cdot+1)$. Finally, we define $G^a_{\mu}$ as follows \[G^a_{\mu} (z)\coloneqq \prod_{u \notin I(a)}\widetilde{F}_{\mu} (z)\prod_{u \in I(a)} F_{\mu}^{(c_u(a))}(z).\]

The following lemma expresses the reduced tree in terms of the coefficients of $G^a_{\mu}$. 

\begin{lemma}\label{lemformuleRnk}
For all $a \in \mathbb{T}_k $,
    \[\mathbb{P}(R_{n,k} = a) = \frac{\zeta_k(a)[z^{n-k-\phi_0(a)}] G^a_{\mu} (z)}{\sum_{b \in \mathbb{T}_k} \zeta_k(b)[z^{n-k-\phi_0(b)}] G^b_{\mu} (z)},\] where $\zeta_k(b) \coloneqq (\prod_{u \in I(b)} c_u(b)!)^{-1} $ for all $b \in \mathbb{T}_k $.

\end{lemma}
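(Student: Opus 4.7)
The plan is to obtain the formula by a direct enumeration of the trees $t \in \mathbb{T}_{n,k}$ having $R(t)=a$, followed by a generating-function encoding of the resulting sum.

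First I would write
\[
\mathbb{P}(R_{n,k}=a) \;=\; \frac{\mathbb{P}(R(T)=a,\, T\in\mathbb{T}_{n,k})}{\mathbb{P}(T\in\mathbb{T}_{n,k})},
\]
and describe how a tree $t$ with $R(t)=a$ is built from $a$: for every vertex $u$ of $a$, one attaches $\ell_u\ge0$ additional leaves of $t$ among the children of $u$, with the constraint that $\ell_u\ge 1$ when $u\notin I(a)$ (since such a $u$ is internal in $t$ but has no internal children). The internal children of $u$ in $t$ must appear in the same left–right order as in $a$, so once $\ell_u$ is fixed there are exactly $\binom{c_u(a)+\ell_u}{\ell_u}$ admissible arrangements at $u$ when $u\in I(a)$, and a single one when $u\notin I(a)$. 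The total number of leaves is $\sum_u \ell_u=n-k$, and the BGW weight is $\mathbb{P}(T=t)=\mu(0)^{n-k}\prod_{u\in a}\mu(c_u(a)+\ell_u)$, because every vertex of $a$ is internal in $t$ with $c_u(t)=c_u(a)+\ell_u$.

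Summing over the compositions $(\ell_u)$ with $\sum_u \ell_u = n-k$, I get
\[
\mathbb{P}(R(T)=a,\,T\in\mathbb{T}_{n,k})
=\mu(0)^{n-k}\,[z^{n-k}]\!\!\prod_{u\in I(a)}\!\!\Bigl(\sum_{\ell\ge 0}\!\binom{c_u(a)+\ell}{\ell}\mu(c_u(a)+\ell)\,z^{\ell}\Bigr)\!\!\prod_{u\notin I(a)}\!\!\Bigl(\sum_{\ell\ge 1}\mu(\ell)\,z^{\ell}\Bigr).
\]
Then I would apply the two elementary identities
\[
\sum_{\ell\ge 0}\binom{c+\ell}{\ell}\mu(c+\ell)\,z^{\ell}=\frac{F_{\mu}^{(c)}(z)}{c!},\qquad \sum_{\ell\ge 1}\mu(\ell)\,z^{\ell}=z\,\widetilde F_{\mu}(z),
\]
so that the product above equals $\zeta_k(a)\,z^{\phi_0(a)}\,G^{a}_{\mu}(z)$, after extracting the factor $\prod_{u\in I(a)}c_u(a)!^{-1}=\zeta_k(a)$ and pulling out $z^{\phi_0(a)}$ (one $z$ per leaf of $a$). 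Extracting the coefficient then gives
\[
\mathbb{P}(R(T)=a,\,T\in\mathbb{T}_{n,k})=\mu(0)^{n-k}\,\zeta_k(a)\,[z^{n-k-\phi_0(a)}]\,G^{a}_{\mu}(z).
\]

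Finally, summing this identity over $b\in\mathbb{T}_k$ gives $\mathbb{P}(T\in\mathbb{T}_{n,k})$, the common factor $\mu(0)^{n-k}$ cancels in the ratio, and one obtains the announced formula. The only real care is in the bookkeeping for the two types of vertices of $a$: leaves of $a$ force $\ell_u\ge 1$ and generate the $z^{\phi_0(a)}$ shift in the coefficient extraction, while internal vertices of $a$ produce the derivatives $F_{\mu}^{(c_u(a))}$ together with the $1/c_u(a)!$ factors gathered in $\zeta_k(a)$. This separation is the only subtle step; the remaining manipulations are standard.
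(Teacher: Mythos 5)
Your proposal is correct and follows essentially the same route as the paper's proof: both decompose the event $\{R=a,\,T\in\mathbb{T}_{n,k}\}$ by recording how the $n-k$ leaves are interleaved around each vertex of $a$ (with the constraint of at least one leaf at each leaf of $a$, which produces the $z^{\phi_0(a)}$ shift), and both identify the resulting sum with $\zeta_k(a)\,[z^{n-k-\phi_0(a)}]G^a_{\mu}(z)$ via the identities $\sum_{\ell\ge 0}\binom{c+\ell}{\ell}\mu(c+\ell)z^{\ell}=F_{\mu}^{(c)}(z)/c!$ and $\sum_{\ell\ge 1}\mu(\ell)z^{\ell}=z\widetilde F_{\mu}(z)$. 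The only cosmetic difference is that the paper counts leaf placements corner by corner and expands $G^a_{\mu}$ first, whereas you encode the sum directly as a coefficient of a product of per-vertex series; the content is identical.
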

\begin{proof}
Fix $a \in \mathbb{T}_k $. On the one hand, we have
\[ \widetilde{F}_{\mu}(z) = \sum_{i\geq 0} \mu(i+1)z^i  \quad \text{and} \quad F_{\mu}^{(c_u(a))}(z) = \sum_{i \geq c_u(a) } \binom{i}{c_u(a)} (c_u(a))! \, \mu(i) z^{i - c_u(a)},\]
so we get
\begin{align*}
G^a_{\mu}(z) & = \biggl(\prod_{u \notin I(a)} \sum_{i\geq 0} \mu(i+1)  z^i \biggr)\biggl(\prod_{u \in I(a)}  \sum_{i \geq 0 } \binom{c_u(a) + i}{c_u(a)} (c_u(a))!  \mu(i + c_u(a)) z^i \biggr) \\
& = \sum_{\substack{(i_u)_{1 \leq u \leq k} \in \mathbb{Z}^k_{\geq 0} }} \biggl( \prod_{u \notin I(a)} \mu(i_u +1)\prod_{u \in I(a)}\binom{c_u(a) + i}{c_u(a)} (c_u(a))! \mu(i + c_u(a)) \biggr) z^{i_1+\cdots+i_k}.
\end{align*}
Then, we deduce that,
\begin{equation} \label{expressionG}
    \left[ z^{n-k-\phi_0(a)} \right] G^a_{\mu}(z) = \sum_{\substack{(i_u)_{1 \leq u \leq k}\\ \sum i_u = n-k-\phi_0(a)} } \prod_{u \notin I(a)} \mu(i_u +1)\prod_{u \in I(a)}\binom{c_u(a) + i}{c_u(a)} (c_u(a))! \, \mu(i + c_u(a)).
\end{equation}
On the other hand, we have
\[\mathbb{P}(R_{n,k} = a) = \frac{\mathbb{P}(R = a , T \in \mathbb{T}_{n,k} )}{\sum_{b \in \mathbb{T}_k} \mathbb{P}(R = b , T \in \mathbb{T}_{n,k} )}. \]
Let us denote by $C_a$ the set of indices of corners in $a$ and $C_a(u)$ the set of indices of corners around the vertex $u$ in $a$. By the definition of $R$, all its vertices are internal nodes of $T$, meaning each must have at least one leaf attached to it in $T$. Consequently, to construct $T$ from $R=a$, we first attach a leaf to each leaf of $R$, and then we add the remaining $n-k-\phi_0(a)$ leaves arbitrarily at the corners of $R$. Hence, by summing over all possible positions of the $n-k-\phi_0(a)$ leaves in the corners of $a$, we get that $ \mathbb{P}(R = a , T \in \mathbb{T}_{n,k} )$ is equal to 
\[ \sum_{ \substack{(q_c)_{c \in C_a}\\ \sum q_c = n-k-\phi_0(a)}} \mu(0)^{n-k} \prod_{u \in a} \mu\biggl(\Tilde{c}_u(a) \text{ } +   \sum_{c \in C_a(u)} q_c \biggr) = \mu(0)^{n-k} \sum_{ \substack{(i_u)_{1 \leq u \leq k}\text{ }\\ \sum i_u = n-k-\phi_0(a)}}  \prod_{u = 1}^k \binom{c_u(a) + i_u}{c_u(a)} \mu(\Tilde{c}_u(a) + i_u)\]
which is equal to \[\zeta_k(a)\mu(0)^{n-k} \sum_{ \substack{(i_u)_{1 \leq u \leq k}\text{ }\\ \sum i_u = n-k-\phi_0(a)}}  \prod_{u \notin I(a)} \mu( i_u +1) \prod_{u \in I(a)} \binom{c_u(a) + i_u}{c_u(a)} (c_u(a))! \textbf{ } \mu( c_u(a) + i_u).\]
Therefore, by Equation \eqref{expressionG} we obtain that\[\mathbb{P}(R = a , T \in \mathbb{T}_{n,k} )  =  \zeta_k(a)\mu(0)^{n-k}  \left[ z^{n-k-\phi_0(a)} \right] G^a_{\mu}(z), \]
and the lemma follows.
\end{proof}

\section{Local setting: Proof of Theorem \ref{th1-intro}}
\label{sec5}
From now on, we aim to study the limiting behavior of a $\mu$-BGW tree conditioned to have $n$ vertices and a fixed number $k \in \mathbb{Z}_{\geq 0}$ of internal nodes, as $n$ tends to infinity. In contrast to Section \ref{sec3}, no universal result holds for arbitrary offspring distributions. Instead, the asymptotic behavior of the tree depends strongly on the specific properties of $\mu$. We therefore distinguish several regimes, each corresponding to a different class of offspring distributions. Nevertheless, it is interesting to note that in all the cases we cover, the reduced tree will converge in distribution to a simply generated tree.

We recall the definition of a slowly varying function.
\begin{definition}\label{defslowlyvarfunc}
    A function $\ell :\mathbb{R}_+\rightarrow\mathbb{R}^*_+$ is slowly varying if for all $a >0$ \[\frac{\ell(ax)}{\ell(x)} \xrightarrow[x \rightarrow +\infty]{} 1.\]
    We denote by $\mathcal{R}_0$ the set of all slowly varying functions.
\end{definition}
In this part, we study the behavior of a $\mu$-BGW tree under the following local assumption on $\mu$, denoted by \eqref{hloc}:
\begin{equation}\label{hloc}
    ``\text{There exists } \ell \in \mathcal{R}_0 \text{ and } \beta > 1 \text{ such that for all } i \geq 0, \, \mu(i)=\ell(i)/i^{1+\beta}."
    \tag{$\mathcal{H}_{loc}$}
\end{equation} 
We aim to prove Theorem \ref{th1-intro} which states that $T_{n,k}$ is `star-liked' with high probability. To this end, we will use the encoding of a tree by its \L ukasiewicz path and the key result Proposition \ref{prop2.3}.

First, we show that if we remove the vertex with the largest degree, the degree of the other vertices behave asymptotically like i.i.d. random variables with the same distribution as $Y_1$. This will imply that for large $n$, the vertex with the largest degree will have a size of order $n$, while the degrees of the remaining vertices will remain bounded. 
\begin{proposition} \label{prop2.5}
Under the assumption \eqref{hloc} we have
\[d_{TV}\bigl( \mathbf{K}^{\downarrow}_{2:k}(T_{n,k}), \mathbf{Y}^{\downarrow}_{1:k-1} \bigr)\xrightarrow[n \rightarrow +\infty]{} 0.\]
Consequently, for all $ \epsilon > 0$, there exists $ M > 0$ such that for n large enough, \[ \mathbb{P}\Bigl(|\mathbf{K}^{\downarrow}_{1}(T_{n,k})- n| < M  \text{ and } \|\mathbf{K}^{\downarrow}_{2:k}(T_{n,k})\| < M \Bigr) \geq 1-\epsilon. \]
\end{proposition}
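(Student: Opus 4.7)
The plan is to combine Proposition \ref{prop2.3} with a local one-big-jump estimate for sums of heavy-tailed i.i.d. random variables. By Proposition \ref{prop2.3}, it suffices to show that the law of $\mathbf{Y}^{\downarrow}_{2:k}$ conditional on $S_k := Y_1+\cdots+Y_k = N$ converges in total variation to the unconditional law of $\mathbf{Y}^{\downarrow}_{1:k-1}$, as $N := n-k-1\to\infty$.

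The first step will be a local large deviation estimate. Under \eqref{hloc}, the mass function $\mathbb{P}(Y_1 = j) = \ell(j+1)/((1-\mu(0))(j+1)^{1+\beta})$ is regularly varying with index $-(1+\beta) < -2$, so $Y_1$ belongs to the local subexponential class. A classical one-big-jump result (see e.g.\ the monograph of Foss, Korshunov and Zachary on heavy-tailed and subexponential distributions) then yields, for each fixed $k$,
\[\mathbb{P}(S_k = N) \underset{N\to\infty}{\sim} k\, \mathbb{P}(Y_1 = N).\]

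The second step is pointwise convergence of the conditional mass function. Fix a vector $\mathbf{v} = (v_1,\ldots,v_{k-1})$ with $v_1 \geq \cdots \geq v_{k-1} \geq 0$ and set $|\mathbf{v}| = v_1+\cdots+v_{k-1}$. For $N$ large enough that $N-|\mathbf{v}| > v_1$, any realization in $\{\mathbf{Y}^{\downarrow}_{2:k}=\mathbf{v},\, S_k=N\}$ has a unique maximum equal to $N-|\mathbf{v}|$, and exchangeability then gives
\[\mathbb{P}\bigl(\mathbf{Y}^{\downarrow}_{2:k}=\mathbf{v},\, S_k=N\bigr) = k\,\mathbb{P}(Y_1 = N-|\mathbf{v}|)\,\mathbb{P}\bigl(\mathbf{Y}^{\downarrow}_{1:k-1}=\mathbf{v}\bigr),\]
where the order statistics on the right refer to $k-1$ i.i.d.\ copies of $Y_1$ and the factor $k$ counts the choice of which coordinate carries the big jump. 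By regular variation, $\mathbb{P}(Y_1=N-|\mathbf{v}|)/\mathbb{P}(Y_1=N) \to 1$ for $\mathbf{v}$ fixed; dividing by $\mathbb{P}(S_k=N)$ and using Step 1 yields
\[\mathbb{P}\bigl(\mathbf{Y}^{\downarrow}_{2:k}=\mathbf{v}\,\big|\,S_k=N\bigr) \xrightarrow[N\to\infty]{} \mathbb{P}\bigl(\mathbf{Y}^{\downarrow}_{1:k-1}=\mathbf{v}\bigr).\]
Since pointwise convergence of probability mass functions on a countable set implies total variation convergence (Scheffé's lemma), the first assertion follows.

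For the consequence, the limit law of $\mathbf{Y}^{\downarrow}_{1:k-1}$ is a proper probability measure on $\mathbb{Z}_{\geq 0}^{k-1}$ and hence is tight: given $\epsilon>0$, there exists $M_0$ with $\mathbb{P}(\|\mathbf{Y}^{\downarrow}_{1:k-1}\| < M_0) > 1-\epsilon/2$, and the total variation convergence transfers this to $\mathbf{K}^{\downarrow}_{2:k}(T_{n,k})$ for large $n$. Since $\sum_{u\in T_{n,k}}(c_u(T_{n,k})-1)=-1$ and the $n-k$ leaves each contribute $-1$, we have $\mathbf{K}^{\downarrow}_1(T_{n,k}) = n-k-1 - \sum_{i=2}^k \mathbf{K}^{\downarrow}_i(T_{n,k})$, which on the good event differs from $n$ by at most $k + (k-1)M_0$; choosing $M$ larger concludes. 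The main technical obstacle is Step 1: what is needed is the \emph{local} asymptotic $\mathbb{P}(S_k=N)\sim k\mathbb{P}(Y_1=N)$ rather than the more familiar tail version $\mathbb{P}(S_k\geq N)\sim k\mathbb{P}(Y_1\geq N)$. It genuinely uses the pointwise form of \eqref{hloc}, and its proof proceeds by splitting the convolution $\mathbb{P}(S_k = N) = \sum_j \mathbb{P}(Y_1 = j)\mathbb{P}(S_{k-1} = N-j)$ according to whether one summand is close to $N$ and carefully controlling the intermediate range using regular variation of $\mu$.
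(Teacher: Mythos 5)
Your proof is correct, and its skeleton coincides with the paper's: transfer to the conditioned walk via Proposition \ref{prop2.3}, show that the law of $\mathbf{Y}^{\downarrow}_{2:k}$ conditioned on $\{Y_1+\cdots+Y_k=n-k-1\}$ converges in total variation to that of $\mathbf{Y}^{\downarrow}_{1:k-1}$, and deduce the second assertion from tightness of $\|\mathbf{Y}^{\downarrow}_{1:k-1}\|$ together with the identity $\sum_{i=1}^{k}\mathbf{K}^{\downarrow}_{i}(T_{n,k})=n-k-1$. The only real difference lies in how you obtain the middle step: the paper simply invokes Equation (2.9) of \cite{AL11} (see also \cite{F07condensation}), whereas you reprove the needed special case from scratch, via the local one-big-jump estimate $\mathbb{P}(S_k=N)\sim k\,\mathbb{P}(Y_1=N)$, the exact exchangeability identity $\mathbb{P}\bigl(\mathbf{Y}^{\downarrow}_{2:k}=\mathbf{v},\,S_k=N\bigr)=k\,\mathbb{P}(Y_1=N-|\mathbf{v}|)\,\mathbb{P}\bigl(\mathbf{Y}^{\downarrow}_{1:k-1}=\mathbf{v}\bigr)$ (with $|\mathbf{v}|$ the sum of the entries of $\mathbf{v}$; valid once $N-|\mathbf{v}|>v_1$, which excludes ties and double counting), and Scheff\'e's lemma on the countable state space. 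This is a legitimate alternative: for fixed $k$ and a regularly varying mass function (which is exactly what the local form of \eqref{hloc} gives, as you rightly emphasize), the convolution-splitting argument with Potter-type bounds does yield $\mathbb{P}(S_k=N)\sim k\,\mathbb{P}(Y_1=N)$, so your Step 1 is sound even though you only sketch it or defer to the Foss--Korshunov--Zachary framework. What your route buys is a short, self-contained, elementary proof tailored to fixed $k$ and regular variation; what the paper's citation buys is brevity and access to the much more general and uniform statements of \cite{AL11}, which are not needed here. Your treatment of the second assertion matches the paper's, up to an immaterial constant ($k+1+(k-1)M$ rather than $k+(k-1)M$), which your final enlargement of $M$ absorbs.
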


\begin{proof}
      Applying Equation (2.9) of \cite{AL11} (see also \cite{F07condensation}), we get that the total variation distance between $\mathbf{Y}_{2:k}^{\downarrow}$ under $\mathbb{P}(\cdot \,|\,\sum_{j=1}^k Y_j=n-k-1)$ and $\mathbf{Y}^{\downarrow}_{1:k-1}$, unconditioned, goes to $0$ when $n$ tends to infinity. Then we get the convergence in total variation distance by applying Proposition \ref{prop2.3}. Moreover, we have $\mathbb{P}\bigl( \exists M \in \mathbb{Z}_{\geq 0} \text{ st } \|\mathbf{Y}^{\downarrow}_{1:k-1}\| < M \bigr) = 1 $ because $\|\mathbf{Y}^{\downarrow}_{1:k-1}\|$ is a finite random variable.
      So, for all $\epsilon > 0$ there exists an integer $M$ such that for $n$ large enough, $\mathbb{P} \bigl(\|\mathbf{K}^{\downarrow}_{2:k}(T_{n,k})\| < M \bigr) \geq 1-\epsilon.$
Moreover, on the event $\{ \|\mathbf{K}^{\downarrow}_{2:k}(T_{n,k})\| < M \}$, we have 
\[K^{\downarrow}_{1}(T_{n,k}) = n -(k+1) - \sum_{i=2}^k K^{\downarrow}_{i}(T_{n,k}) \geq n - (k+1) - M(k-1). \]
So $| K^{\downarrow}_{1}(T_{n,k}) - n | \leq M(k-1) + k+1$. Thus, taking $ M' \coloneqq M(k-1) +k+1 > M $ we get
\[ \mathbb{P}\Bigl(\|\mathbf{K}^{\downarrow}_{2:k}(T_{n,k})\| < M' \text{ and }|\mathbf{K}^{\downarrow}_{1}(T_{n,k})-n| < M' \Bigr) \geq 1-\epsilon . \]This concludes the proof.
\end{proof}
Now, we prove that with high probability the root is the vertex with the largest degree and that $R_{n,k}$ converges in distribution to $\ast_k$.
\begin{proposition}\label{prop2.6} Under the assumption \eqref{hloc} we have the following convergence:
    \[\mathbb{P}\Bigl( R_{n,k} = \ast_k \text{ and the root has maximal degree in } T_{n,k}  \Bigr) \xrightarrow[n \rightarrow +\infty]{} 1. \]
\end{proposition}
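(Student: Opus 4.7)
The plan is to combine Propositions \ref{consequencecyclelemma} and \ref{prop2.5} with a direct count of \L ukasiewicz paths. Fix $\epsilon > 0$ and let $M$ be the constant given by Proposition \ref{prop2.5}, so that with probability at least $1-\epsilon$ we lie on the event $\mathcal{E}$ on which the \L ukasiewicz path of $T_{n,k}$ has exactly one ``big'' jump $B$ satisfying $|B-(n-k-1)|\leq (k-1)M$, together with $k-1$ bounded positive jumps $S_1,\ldots,S_{k-1}\leq M$ and $n-k$ down-steps equal to $-1$. By the exchangeability of the i.i.d.\ increments combined with Proposition \ref{consequencecyclelemma}, conditionally on this multiset the ordering of the steps is uniform among orderings producing a valid \L ukasiewicz path, and exactly $1/n$ of the $n!/(n-k)!$ orderings of the multiset are valid, that is $(n-1)!/(n-k)!$ of them.

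The combinatorial heart of the argument is then to show that, conditionally on such a multiset, the event $\{R_{n,k}=\ast_k\text{ and the root has maximal degree}\}$ has conditional probability tending to $1$. This event is equivalent to the \L ukasiewicz path having the following shape: step $1$ equals $B$, and the remaining $n-1$ steps decompose, in lexicographic order, into the $c=B+1$ child-subtrees of the root, each of which is either a single $-1$ step (a leaf child) or a ``flat block'' $(S_j,-1,\ldots,-1)$ of length $S_j+2$ (a flat internal child with $S_j+1$ leaf children). Since the walk height at the end of the $i$-th child-subtree equals $B-i$, which is $\geq 0$ for $i\leq c-1$ and equals $-1$ only at $i=c$ (time $n$), every such arrangement is automatically a valid \L ukasiewicz path, and a direct enumeration gives $\binom{c}{k-1}(k-1)!$ such orderings (choose the $k-1$ positions of the flat internals among the $c$ children, then assign $S_1,\ldots,S_{k-1}$ to them). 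Dividing by the total count $(n-1)!/(n-k)!$ yields the conditional probability
\[
\frac{c(c-1)\cdots(c-k+2)}{(n-1)(n-2)\cdots(n-k+1)},
\]
which converges uniformly to $1$ on $\mathcal{E}$ as $n\to\infty$, since there $c\in[n-k-(k-1)M,\,n-k]$; combining with $\mathbb{P}(\mathcal{E})\geq 1-\epsilon$ and letting $\epsilon\to 0$ completes the proof.

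The main obstacle is the combinatorial identification of the ``good'' orderings: one needs to verify carefully that the arrangements described above are exactly the valid orderings yielding $R_{n,k}=\ast_k$ with the root as the unique big vertex, neither missing any nor inadvertently including arrangements in which some $S_j$ is nested inside the block of another (which would create a reduced tree strictly larger than $\ast_k$). Once this bijection is in place, the positivity check reduces to the observation that each child-subtree of the root contributes exactly $-1$ to the walk, so the walk heights at the boundaries between child-subtrees form the arithmetic progression $B, B-1, \ldots, 0, -1$, and the asymptotic analysis of the ratio is immediate.
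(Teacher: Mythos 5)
Your proof is correct, but it takes a genuinely different route from the paper's. You stay with the \L ukasiewicz path: conditionally on the multiset of increments supplied by Proposition \ref{prop2.5}, exchangeability makes the path uniform over the valid orderings, the cycle lemma (really the lemma itself with $k=1$, rather than Proposition \ref{consequencecyclelemma}) counts these as $(n-1)!/(n-k)!$, and your block decomposition identifies the good orderings and counts them as $\binom{c}{k-1}(k-1)!$ with $c=B+1$, giving the exact conditional probability $\frac{c(c-1)\cdots(c-k+2)}{(n-1)(n-2)\cdots(n-k+1)}\to 1$ uniformly on $\mathcal{E}$. The paper never returns to the path at this stage: on the event $A_{M,n}$ of Proposition \ref{prop2.5} it writes $\mathbb{P}(T\in\mathbb{T}_{n,k},\,R=r,\,c_u(T)=C)$ explicitly and compares the factor $\binom{C}{c_u(r)}$, which is of order $n^{k-1}$ for $(r,u)=(\ast_k,\varnothing_r)$ but at most $n^{k-2}M^{k}$ in every other case, so the bad configurations are negligible. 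Your version buys an exact ratio and avoids estimating the sums over leaf placements; the paper's version avoids the path-shape bijection, which is indeed the delicate step in your approach and which your boundary-height computation ($B,B-1,\ldots,0,-1$), together with the observation that when $R=\ast_k$ every non-root internal node is a child of the root with only leaf children (so no $S_j$ can be nested inside another block), does settle. Two cosmetic points: the $S_j$ are nonnegative rather than positive (a one-child internal node gives the block $(0,-1)$, which your argument handles verbatim); and the counts $n!/(n-k)!$ and $\binom{c}{k-1}(k-1)!$ implicitly treat $B,S_1,\ldots,S_{k-1}$ as distinguishable, but when some $S_j$ coincide both counts acquire the same symmetry factor, so the ratio, hence the conclusion, is unchanged (alternatively, run the count with labelled steps).
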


\begin{proof}
For all $i \in [\![0,k-1]\!] $ and $t \in \mathbb{T}_{n,k}$ we let $I_i(t)$ be the event defined by \[I_i(t) \coloneqq \left\{ \text{the } \textit{i}\text{-th internal node has maximal degree in } t \right\}.\] For $M > 0$ and $t \in \mathbb{T}_{n,k}$, set \[A_{M,n}(t) \coloneqq \left\{ |\mathbf{K}^{\downarrow}_{1}(t)- n| < M  \text{ and } \|\mathbf{K}^{\downarrow}_{2:k}(t)\| < M \right\}.\]
Observe that for sufficiently large $n$, the event $A_{M,n}$ is equivalent to the existence of a unique vertex in $t$, denoted by  $u^{\star}$, such that $c_{u^{\star}}(t) > n-M$.
Using Proposition \ref{prop2.5}, fix $\epsilon > 0$ and $M > 0$ such that for $n$ large enough, $\mathbb{P}\bigl(A_{M,n} (T_{n,k}) \bigr) \geq 1 - \epsilon$. Then, it is enough to prove the convergence
\[\mathbb{P}\bigl( R_{n,k} = \ast_k, I_0(T_{n,k})  \big| A_{M,n}(T_{n,k}) \bigr) = \frac{\mathbb{P}\bigl( T \in \mathbb{T}_{n,k}, R = \ast_k, A_{M,n}(T), I_0(T) \bigr)}{\mathbb{P}\bigl( T \in \mathbb{T}_{n,k}, A_{M,n}(T) \bigr)} \xrightarrow[n \rightarrow +\infty]{} 1. \]
For $r \in \mathbb{T}_{k}, u$ an internal vertex of $r$ and $C \geq 1,$ we define 
\[\mathcal{A}^{r,u}_{C,n}\coloneqq \left\{ (a_v)_{1 \leq v \leq k} \in \mathbb{Z}_{\geq 0}^k : a_v \geq 1 \text{ if } v \text{ is a leaf in } r \text{ and } \sum_{v \in r \setminus \{u\}}a_v= n-C-1 \right\}.\]
Thus, we have
\[\mathbb{P}( T \in \mathbb{T}_{n,k}, R=r, c_u(T)=C )=\binom{C}{c_u(r)}\mu(0)^{n-k}\mu(C)  \sum_{(a_v) \in \mathcal{A}^{r,u}_{C,n}} \prod_{v \neq u} \binom{a_v}{c_v(r)} \mu(a_v).\]
Observe that $ (C-c_u(r))^{c_u(r)} \leq \binom{C}{c_u(r)} \leq C^{c_u(r)}$ and $1 \leq \binom{a_v}{c_v(r)} \leq a_v^{c_v(r)}$, so if $n \geq C > n-M$ we get $(n-M-c_u(r))^{c_u(r)} \leq \binom{C}{c_u(r)} \leq n^{c_u(r)}$ and $1 \leq \binom{a_v}{c_v(r)} \leq M^{k}$. Consequently, if $r=\ast_k$ and $u = \varnothing_{r}$, then we have $c_u(r)=k-1$ and $\mathbb{P}(T \in \mathbb{T}_{n,k}, R=r, c_u(T) > n-M)$ is at least
\[\mu(0)^{n-k}(n-M-k+1)^{k-1}\mu(C)\sum_{C > n-M}\sum_{(a_v)\in \mathcal{A}^{r,u}_{C,n}}\prod_{v \neq u} \binom{a_v}{c_v(r)} \mu(a_v), \]
whereas if either $r\neq \ast_k$ or $r=\ast_k$ and $u \neq \varnothing_r$, then $c_u(r) \leq k-2$ and $\mathbb{P}\bigl(T \in \mathbb{T}_{n,k}, R=r, c_u(T) > n-M)$ is at most \[\mu(0)^{n-k}n^{k-2}M^k\mu(C)\sum_{C > n-M}\sum_{(a_v) \in \mathcal{A}^{r,u}_{C,n}}\prod_{v \neq u} \binom{a_v}{c_v(r)} \mu(a_v).\]
Therefore, we have \[\frac{\mathbb{P}\bigl(T \in \mathbb{T}_{n,k},A_{M,n}(T), c_{u^{\star}}(R) \leq k-2 \bigr) }{\mathbb{P}(T \in \mathbb{T}_{n,k}, A_{M,n}(T), R=\ast_k, I_0(T) \bigr)} \xrightarrow[n \rightarrow +\infty]{} 0.\]
Noting that under the event $\{T \in \mathbb{T}_{n,k},A_{M,n}(T)\}$, we have either $ c_{u^{\star}}(R) \leq k-2 $ or $R=\ast_k$ and $ I_0(T)$, and using the fact that  $b/(a+b) \rightarrow 1$ when $a/b \rightarrow 0$, we obtain the result.
\end{proof}

We can now proceed to the proof of the theorem.
\begin{proof}[\textit{Proof of Theorem }\ref{th1-intro}]
First, for all $a \in \mathbb{T}_{n,k}$ such that $R(a) \neq \ast_k$, $\mathbb{P}(T_{n,k}=a)$ tends to $0$ by Proposition \ref{prop2.6} and $\mathbb{P}(D_{n,k}=a)$ tends to $0$ by construction (as $\mathbb{P}(G_{n,k}) \xrightarrow[n \rightarrow +\infty]{} 1$). Then, we have that $d_{TV}(T_{n,k},D_{n,k})$ is equal to
\[\frac{1}{2}\sup_{a \in \mathbb{T}_{n,k}}\bigl| \mathbb{P}(T_{n,k}=a) - \mathbb{P}(D_{n,k}=a) \bigr| = \frac{1}{2}\sup_{\substack{a \in \mathbb{T}_{n,k}\\ R(a) = \ast_k}}\bigl| \mathbb{P}(T_{n,k}=a) - \mathbb{P}(D_{n,k}=a) \bigr| + o(1).\]
Now as the reduced tree $R(a)$ is fixed equal to $\ast_k$, according to Remark \ref{rqbij2} we just have to look at the sequence of leaves. Let $L(D_{n,k})$ be the list of leaves of $D_{n,k}$ and define the bijection $f : \mathbb{Z}_{\geq 0}^{2k-1} \rightarrow \{ a:R(a) = \ast_k\}$ as shown in Figure \ref{Fig4}. Then, $ 2d_{TV}(T_{n,k},D_{n,k})$ is equal up to $o(1)$ to
\begin{figure}[h] 
    \centering
    \includegraphics[scale = 1]{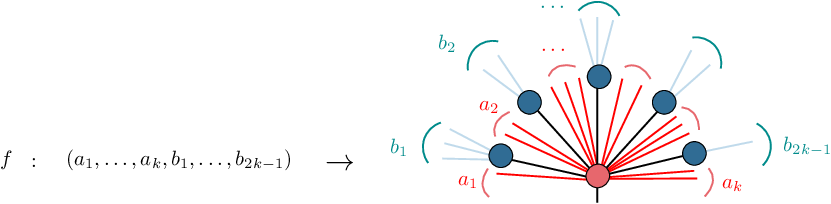}
    \caption{Definition of $f$.}
    \label{Fig4}
\end{figure}
\begin{align*}
  \sup_{\substack{a \in \mathbb{T}_{n,k}\\ R(a) = \ast_k}}\Bigl| \mathbb{P}\bigl(f(L(T_{n,k}))=a\bigr) - \mathbb{P}\bigl(f(L(D_{n,k}))=a \bigr) \Bigr| & = \sup_{\substack{a \in \mathbb{T}_{n,k}\\ R(a) = \ast_k}}\Bigl| \mathbb{P}\bigl(L(T_{n,k})=f^{-1}(a)\bigr) - \mathbb{P}\bigl(L(D_{n,k})=f^{-1}(a)\bigr) \Bigr| \\ 
  & = \sup_{s \in \mathbb{Z}_{\geq 0}^{2k-1}}\Bigl| \mathbb{P}\bigl(L(T_{n,k})=s\bigr) - \mathbb{P}\bigl(L(D_{n,k})=s \bigr) \Bigr|.
\end{align*}
Moreover, conditionally given the event ``$R_{n,k} = \ast_k \text{ and the root has maximal degree in } T_{n,k}$'', the number of children of the $k-1$ leaves of $R_{n,k}$ in $T_{n,k} $ are exchangeable by definition of the law of a $\mu$-BGW tree. So, $(L_{k+1}(T_{n,k}),\ldots,L_{2k-1}(T_{n,k}))$ has the law of an independent uniform permutation of $\mathbf{K}^{\downarrow}_{2:k}(T_{n,k})$.
Hence, applying Propositions \ref{prop2.5} and \ref{prop2.6}, we deduce that \[d_{TV}\bigl((L_{k+1}(T_{n,k}),\ldots,L_{2k-1}(T_{n,k})),(L_{k+1}(D_{n,k}),L_{2k-1}(D_{n,k}))\bigr)\xrightarrow[n \rightarrow +\infty]{} 0.\]
Finally, conditionally given $R_{n,k}= \ast_k$ and $(L_{k+1}(T_{n,k}),\ldots,L_{2k-1}(T_{n,k}))$, the vector
$(L_1(T_{n,k}),\ldots,\\L_{k}(T_{n,k}))$ is a composition of $n-k-\sum_{i=k}^{2k-1} L_{i}(T_{n,k})$ into $k$ parts sampled uniformly at random, so is $(L_{1}(D_{n,k}),\ldots,L_{k}(D_{n,k}))$ on $G_{n,k}$, and we get the desired result.
\end{proof}

\section{Tail setting: Proof of Theorem \ref{th3.2}} \label{sec6}
Up to this point, the main tool for proving our results has been the description of a BGW tree through its \L ukasiewicz path. From now on, we will completely shift our approach and rely instead on techniques from analytic combinatorics. In fact, the key result for describing the behavior of the reduced tree of a BGW tree conditioned to have $n$ vertices and $k$ internal nodes will be Lemma \ref{lemformuleRnk}, which expresses it in terms of the coefficients of a generating function and its derivatives. Our goal is therefore be to analyze these coefficient extractions.
\begin{table}[htbp]\caption{Table of the main notation and symbols introduced in Section \ref{sec6} and used later.}
\centering
\begin{tabular}{c c p{12cm} }
\toprule
$\mathcal{H}$ && ``There exists a positive real number $u_0$ and an angle $0 < \omega < \pi/2 $ such that $L(u) \neq 0$ and is analytic in the domain $ \{ u :  \omega -\pi \leq Arg(u-u_0) \leq \pi - \omega\}$ and satisfies for $\theta \in (\omega-\pi,\pi - \omega), u \geq u_0$:
\[\lvert \frac{L(ue^{i\theta})}{L(u)} - 1 \rvert < \epsilon(u) \text{ and } \lvert \frac{L(u\log^2(u))}{L(u)} - 1 \rvert < \epsilon(u). \text{''} \]\\
$\mathcal{L}$ && the set of slowly varying functions which satisfies $\mathcal{H}$\\
\hline
$C_a$ && indices of corners in the tree $a$ \\
$C_a(u)$ && indices of corners around the vertex $u$ in $a$  \\
\hline 
$m$ && a real number greater than $-1$ \\
$\alpha$ && a real number in $ (1,2) $ \\
$\hat{z}$ && $(1-z)^{-1}$ where $0<z<1$ \\
$\ell$ && a function in  $\mathcal{L}$ \\
$S(u)$ && $ \ell(u)u^{-\alpha}$ where $\forall u > 0$ \\
\bottomrule
\end{tabular}
\end{table}

In this section, we study the behavior of a $\mu$-BGW tree under a ``tail'' assumption on $\mu$. Let us define what this means. We denote by $\mathcal{L}$ the set of functions $L: \mathbb{C}\rightarrow\mathbb{C}$ satisfying the following assumption $\mathcal{H}$ and such that the restriction on $\mathbb{R}$ is slowly varying. Assumption $\mathcal{H}$: ``There exists a positive real number $u_0$, an angle $0 < \omega < \pi/2$ and a function $ \epsilon : \mathbb{R}_+ \rightarrow \mathbb{R}$  with $\epsilon(x) \underset{x \to \infty}{\rightarrow} 0$  such that $ L(u) \neq 0 $ and is analytic in the domain $ \{ u:\omega-\pi \leq Arg(u-u_0) \leq \pi - \omega\} $ and satisfies for $ \theta \in (\omega-\pi,\pi -\omega), u \geq u_0$:\[ \lvert \frac{L(ue^{i\theta})}{L(u)} - 1 \rvert < \epsilon(u) \text{ and } \lvert \frac{L(u\log^2(u))}{L(u)} - 1 \rvert < \epsilon(u) \text{''}.\]
We introduce this technical assumption $\mathcal{H}$ with the aim of applying Theorem $5$ from \cite{FO90} later in the proof of Theorem \ref{th3.2}. This theorem connects the behavior of a generating function near its critical point with the asymptotic behavior of its coefficients.
Let $m$ will be a real number greater than $-1$ and $\alpha$ a real number in $ (1,2)$. Let $\mu$ be a probability distribution on $\mathbb{Z}_{\geq 0}$ with mean equal to $1+m$ and generating function $F_{\mu}$ given by Equation \eqref{defF}: \begin{equation*}
 F_{\mu}(z) = z -mz +mz^2 + \ell\Bigl(\frac{1}{1-z}\Bigr)(1-z)^\alpha,   
\end{equation*} where $\ell$ is in $\mathcal{L}$. In particular, taking $n=1$ in Theorem 8.1.6 of \cite{BGT97}, one can check that $F_{\mu}$ satisfies (8.1.9) there, which is equivalent to (8.1.11b), and that implies 
\begin{equation}\label{propmu}
\mu([n,\infty[) \underset{n \to \infty}{\sim} \frac{\ell(n)}{-\Gamma(1-\alpha) n^\alpha}.
\end{equation}

\begin{remark} \label{rem1}
One can check that if $\ell \in \mathcal{L}$, then for all $p \in \mathbb{Z}_{\geq 0}, \ell^p \in \mathcal{L}$.
\end{remark}

Recall that $c^{o}_i(a)$ is the number of children of the $i$-th internal vertex counted in lexicographic order in $a$. The aim of this section is to prove Theorem \ref{th3.2}, illustrated in Figure \ref{Fig5}. 

Firstly, the third point of Theorem \ref{th3.2} follows directly by applying the following lemma (which is a straightforward consequence of Lemma \ref{thcvgdir}) with $P_n = n-c^o_0(T_{n,k})$, which is a $o(n)$ as established in the second point of Theorem \ref{th3.2}.
\begin{lemma}\label{corcvgdir}
 Let $m \geq 2$ and $(P_n)_{n\geq 0}$ be a sequence of integer-valued random variables such that \[ \frac{P_n}{n} \xrightarrow[n \rightarrow +\infty]{(\mathbb{P})} 0.\]
 Conditionally given $(P_n)_{n\geq 0}$, let $(U_{n,1},\ldots,U_{n,m})$ be uniform on $\bigl\{ (\ell_{1},\ldots,\ell_{m}) :  \forall i \in [\![1,m]\!] \text{ } \ell_{i} \geq 0 \text{ and } \sum_{i=1}^m \ell_{i} = n-P_n \bigr\}$. Then we have \[\frac{(U_1^{n},\ldots , U_m^{n})}{n} \xrightarrow[n \rightarrow \infty]{(d)} \mathbf{Dir}(1,\ldots,1).\]
\end{lemma}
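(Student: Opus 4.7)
The plan is to deduce Lemma~\ref{corcvgdir} from the deterministic statement of Lemma~\ref{thcvgdir} by conditioning on $P_n$ and exploiting that $P_n=o_{\mathbb{P}}(n)$. Two ingredients are needed: a strengthening of Lemma~\ref{thcvgdir} in which the shift $p$ is allowed to depend on $n$ (as long as $p_n/n\to 0$), and a transfer from a deterministic shift to the random shift $P_n$.

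For the first ingredient, Lemma~\ref{thcvgdir} applied with $p=0$ gives $V_N/N\xrightarrow[N\to\infty]{(d)}\mathbf{Dir}(1,\ldots,1)$, where $V_N$ denotes a uniform composition of $N$ into $m$ parts. Hence, for any deterministic non-negative integer sequence $(p_n)$ with $p_n/n\to 0$, setting $N_n=n-p_n$ and writing $V_{N_n}/n=(N_n/n)\cdot V_{N_n}/N_n$, Slutsky's lemma yields $V_{N_n}/n\xrightarrow[n\to\infty]{(d)}\mathbf{Dir}(1,\ldots,1)$.

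For the second ingredient, I fix a bounded continuous test function $\phi:\mathbb{R}^m\to\mathbb{R}$ and set $f_n(p)\coloneqq\mathbb{E}[\phi(V_{n,p}/n)]$, where $V_{n,p}$ is uniform on compositions of $n-p$ into $m$ parts. Conditioning on $P_n$ yields $\mathbb{E}[\phi((U_{n,1},\ldots,U_{n,m})/n)]=\mathbb{E}[f_n(P_n)]$, and the previous paragraph gives $f_n(p_n)\to\mathbb{E}[\phi(\mathbf{Dir}(1,\ldots,1))]$ for every deterministic $(p_n)$ with $p_n/n\to 0$. To transfer this to the random case, I would invoke the classical subsequence principle: from any subsequence of $\mathbb{N}$, extract a further subsequence $(n_k)$ along which $P_{n_k}/n_k\to 0$ almost surely; on the full-measure event this gives $f_{n_k}(P_{n_k}(\omega))\to\mathbb{E}[\phi(\mathbf{Dir}(1,\ldots,1))]$ pointwise, and bounded convergence (valid since $|f_n|\leq\|\phi\|_\infty$) then yields $\mathbb{E}[f_{n_k}(P_{n_k})]\to\mathbb{E}[\phi(\mathbf{Dir}(1,\ldots,1))]$. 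Since every subsequence has a sub-subsequence converging to the same limit, the entire sequence $\mathbb{E}[f_n(P_n)]$ converges.

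The main subtlety is that one cannot apply Lemma~\ref{thcvgdir} directly inside the conditional expectation because the lemma is stated for fixed $p$ while $P_n$ is random; the strengthening to deterministic sequences $p_n/n\to 0$ via Slutsky, combined with the subsequence-plus-bounded-convergence reduction, is what bridges this gap cleanly.
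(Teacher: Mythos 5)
Your argument is correct and is exactly the route the paper intends: the paper gives no written proof, merely asserting that the lemma is ``a straightforward consequence of Lemma \ref{thcvgdir}'', and your Slutsky strengthening to deterministic sequences $p_n=o(n)$ followed by the subsequence-plus-bounded-convergence transfer to the random shift $P_n$ is a clean and complete way of filling in that claim.
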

In the next two subsections, we prove the first two parts of Theorem~\ref{th3.2}.

\subsection{The reduced tree converges in distribution to a star}
We recall from Subsection $4.3$ that for $a \in \mathbb{T}_k$, we denote by $I(a)$ the set of internal nodes in $a$. For a vertex $u$ of $a$, the quantity $\Tilde{c}_u(a)$ is defined as: 
\[\Tilde{c}_u(a) \coloneqq \left\{
    \begin{array}{ll}
        c_u(a) & \mbox{if } u \mbox{ is not a leaf of } a\\
        1 & \mbox{otherwise.} 
    \end{array}
\right.\]
Moreover, for each $i \in \mathbb{Z}_{\geq 0}$, $\phi_i(a)$ denotes the number of vertices in $a$ with $i$ children. We also recall that $\widetilde{F}_{\mu}$ is the generating function of $\mu(\cdot+1)$ and that $G^a_{\mu}$ is defined by \[G^a_{\mu} (z)\coloneqq \prod_{u \notin I(a)}\widetilde{F}_{\mu} (z)\prod_{u \in I(a)} F_{\mu}^{(c_u(a))}(z).\]

The aim of this part is to establish the first statement of Theorem \ref{th3.2}. By Lemma \ref{lemformuleRnk}, it is sufficient to analyze the behavior of $[z^{n-k-\phi_0(a)}]G^a_{\mu}(z)$ as $n$ tends to infinity to determine the asymptotics of $\mathbb{P}(R_{n,k} = a)$. 

For $k \in \{0,1,2\}$, $|\mathbb{T}_{k}|=1$ so the result is clear for these $k$. From now on, we assume $k \geq 3$. To simplify notation, we set $S(u) = \ell(u)u^{-\alpha}$ for all $u>0$ and $\hat{z} = (1-z)^{-1}$ for $0<z<1$. 

The following lemma establishes a stability property of derivatives for $S$, which will be useful to get the asymptotic of $G^a_{\mu}(z)$ since, according to Equation \eqref{defF}, $F_{\mu}(z) =  z -mz +mz^2 + S(\hat{z}) $.

\begin{lemma}\label{lemS}
For all $p\in \mathbb{Z}_{\geq 0}$, we have the following convergence:
\[ \frac{u^pS^{(p)}(u)}{S(u)} \xrightarrow[u \rightarrow \infty]{} \frac{\Gamma(-\alpha + 1)}{\Gamma(-\alpha + 1 - p)}. \]

\end{lemma}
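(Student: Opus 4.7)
The plan is to isolate the ``pure power'' contribution from the Leibniz expansion of $S^{(p)}$ and to show that the remainder terms, which involve derivatives of $\ell$, are negligible. By Leibniz's rule applied to $S(u)=\ell(u)u^{-\alpha}$,
\[\frac{u^{p}S^{(p)}(u)}{S(u)}=\sum_{j=0}^{p}\binom{p}{j}\Biggl(\prod_{i=0}^{p-j-1}(-\alpha-i)\Biggr)\frac{u^{j}\ell^{(j)}(u)}{\ell(u)}.\]
The $j=0$ term equals $\prod_{i=0}^{p-1}(-\alpha-i)=\Gamma(-\alpha+1)/\Gamma(-\alpha+1-p)$ by the functional equation of $\Gamma$ and matches the asserted limit. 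Everything then reduces to proving that $u^{j}\ell^{(j)}(u)/\ell(u)\to 0$ as $u\to\infty$ for each $j\geq 1$.

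For this estimate I would use Cauchy's integral formula, exploiting the analyticity of $\ell$ on the sector $\{\omega-\pi\leq\arg(u-u_{0})\leq\pi-\omega\}$ guaranteed by $\mathcal{H}$. Since $\omega<\pi/2$, for every sufficiently large $u$ the circle $\gamma_{u}\coloneqq\{\zeta:|\zeta-u|=u/2\}$ lies entirely inside this sector (its points satisfy $|\arg\zeta|\leq\arcsin(1/2)=\pi/6$). Because $\oint_{\gamma_{u}}(\zeta-u)^{-j-1}d\zeta=0$ for $j\geq 1$, Cauchy's formula can be rewritten as
\[\ell^{(j)}(u)=\frac{j!}{2\pi i}\oint_{\gamma_{u}}\frac{\ell(\zeta)-\ell(u)}{(\zeta-u)^{j+1}}\,d\zeta,\]
and the standard $ML$-estimate gives $|\ell^{(j)}(u)|\leq j!\,(u/2)^{-j}\,\max_{\gamma_{u}}|\ell(\zeta)-\ell(u)|$. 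It therefore suffices to prove that $\max_{\gamma_{u}}|\ell(\zeta)/\ell(u)-1|\to 0$ as $u\to\infty$.

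The main technical obstacle is this last uniform estimate, since $\mathcal{H}$ directly controls only points of the form $ue^{i\theta}$. I would resolve it by factoring $\ell(\zeta)/\ell(u)=\bigl(\ell(\zeta)/\ell(|\zeta|)\bigr)\cdot\bigl(\ell(|\zeta|)/\ell(u)\bigr)$: the first factor tends uniformly to $1$ in $\arg\zeta$ by the angular clause of $\mathcal{H}$, while the second tends to $1$ because $|\zeta|/u\in[1/2,3/2]$ on $\gamma_{u}$ and the real restriction of $\ell$ is slowly varying, which yields uniform convergence of $\ell(cu)/\ell(u)$ to $1$ on compact $c$-intervals away from $0$. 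Combining these two facts gives the desired uniform estimate, whence $u^{j}\ell^{(j)}(u)=o(\ell(u))$ for every $j\geq 1$ and the lemma follows.
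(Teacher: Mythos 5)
Your proof is correct, but it takes a genuinely different route from the paper. You work directly with $S(u)=\ell(u)u^{-\alpha}$ via the Leibniz rule and reduce everything to $u^{j}\ell^{(j)}(u)=o(\ell(u))$ for $j\geq 1$, which you obtain from Cauchy's integral formula on the circle $|\zeta-u|=u/2$ together with the angular-uniformity clause of $\mathcal{H}$ (applied at radius $|\zeta|\geq u/2$) and the uniform convergence theorem for the slowly varying restriction of $\ell$ to the reals; both ingredients are used legitimately, and the closed disk does lie in the analyticity sector for $u>2u_0$. The paper instead argues entirely on the real line: it sets $g(u)=S(1/u)=F_{\mu}(1-u)-P(1-u)$, exploits the fact that the derivatives of the generating function are monotone near the singularity to apply a Lamperti-type monotone-density theorem inductively (showing each $g^{(p)}$ is regularly varying of the expected index), and then converts back to $S$ via Fa\`a di Bruno's formula, finishing with a combinatorial identity $\sum \frac{p!}{m_1!\cdots m_p!}\,\frac{\Gamma(\alpha+1)}{\Gamma(\alpha+1-\sum_i m_i)}=\frac{\Gamma(\alpha+p)}{\Gamma(\alpha)}$ proved by a balls-in-urns count. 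The trade-off: your argument is shorter, bypasses Fa\`a di Bruno and the combinatorial identity, and applies to any $\ell\in\mathcal{L}$ irrespective of its link to $F_{\mu}$, but it consumes the full complex-analytic strength of $\mathcal{H}$ (which the paper only invokes later, for the transfer theorem), whereas the paper's proof needs only real-variable information plus the specific structure \eqref{defF} relating $S$ to the probability generating function.
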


\begin{proof}
For all $u > 0$, set $g(u) \coloneqq S(1/u) = u^\alpha\ell(1/u)$ so that we have $g(z) = F_{\mu}(1-z) - P(1-z)$, where $P$ is the polynomial defined by $P(x)=mx^2+(1-m)x$. It readily follows that $g$ is infinitely differentiable and has monotone derivatives on a neighborhood of $0$. Consequently, applying Theorem 2 of \cite{Lamp}, we get
\[x\frac{g'(x)}{g(x)} \xrightarrow[x \rightarrow 0^{+}]{} \alpha. \]
By setting $h_1(x)=g'(1/x)$, we deduce that for all $\lambda > 0$
\[\frac{h_1(\lambda x)}{h_1(x)} \underset{x \to +\infty}{\sim} \lambda^{1-\alpha}\frac{\ell(\lambda x)}{\ell(x)} \xrightarrow[x \rightarrow+\infty]{} \lambda^{1-\alpha}, \]
because $\ell$ is slowly varying. Thus, $h_1$ is regularly varying and can be written $h_1(x)=x^{1-\alpha}\ell_1(x)$ with $\ell_1$ a slowly varying function, so $g'(x)=x^{\alpha-1}\ell_1(1/x)$. Hence, we can again apply Theorem 2 of \cite{Lamp} to $g'$ to get that \[x^2\frac{g^{(2)}(x)}{g(x)} \xrightarrow[x \rightarrow 0^{+}]{} \alpha(\alpha-1). \]
We similarly prove by induction that for all positive integers $p$, we have  \begin{equation} \label{cvg_g}
    x^p\frac{g^{(p)}(x)}{g(x)} \xrightarrow[x \rightarrow 0^{+}]{} \frac{\Gamma(\alpha+1)}{\Gamma(\alpha +1-p)}.
\end{equation}
Now, using the Faà di Bruno's derivation formula, we get that for all positive integers $p$,
\begin{align*}
    S^{(p)}(u) &= \sum_{\substack{m_1,\ldots,m_p \in \mathbb{Z}_{\geq 0}\\
             m_1+2m_2+\cdots+pm_p=p}} \frac{p!}{m_1! \cdots m_p!}g^{(m_1+\cdots+m_p)}(1/u)\prod_{i=1}^p \frac{(-1)^{im_i}}{u^{(i+1)m_i}} \\
             & = \sum_{\substack{m_1,\ldots,m_p \in \mathbb{Z}_{\geq 0}\\
             m_1+2m_2+\cdots+pm_p=p}} \frac{(-1)^p}{u^p} \frac{p!}{m_1! \cdots m_p!} \frac{g^{(m_1+\cdots+m_p)}(1/u)}{u^{m_1+\cdots+m_p}}.
\end{align*}
So, by Equation \eqref{cvg_g}, we have
\begin{align*}
  (-1)^p u^p \frac{S^{(p)}(u)}{S(u)} & = \sum_{\substack{m_1,\ldots,m_p \in \mathbb{Z}_{\geq 0}\\
             m_1+2m_2+\cdots+pm_p=p}} \frac{p!}{m_1! \cdots m_p!}\frac{1}{u^{(m_1+\cdots+m_p)}} \frac{g^{(m_1+\cdots+m_p)}(1/u)}{g(1/u)} \\
             & \underset{u \to \infty}{\sim} \sum_{\substack{m_1,\ldots,m_p \in \mathbb{Z}_{\geq 0}\\
             m_1+2m_2+\cdots+pm_p=p}} \frac{p!}{m_1! \cdots m_p!} \frac{\Gamma(\alpha+1)}{\Gamma(\alpha +1-\sum_im_i)}.
\end{align*}
Consequently, as $(-1)^p \frac{\Gamma(-\alpha + 1)}{\Gamma(-\alpha + 1 - p)} = \frac{\Gamma(\alpha+p)}{\Gamma(\alpha)}$, the claim follows if we prove the following equality
\begin{equation}\label{combinatorial_equality}
 \sum_{\substack{m_1,\ldots,m_p \in \mathbb{Z}_{\geq 0}\\ m_1+2m_2+\cdots+pm_p=p}} \frac{p!}{m_1! \cdots m_p!} \frac{\Gamma(\alpha+1)}{\Gamma(\alpha +1-\sum_im_i)} = \frac{\Gamma(\alpha+p)}{\Gamma(\alpha)}.
 \end{equation}
Fix $N$ and $p$ positive integers such that $p <N$ and let us count in two different ways the number of arrangements of $p$ balls into $N$ urns. On one hand, this is equal to $\binom{N -1 +p}{p}$, on the other hand we can count this in that way: We chose $m_1$ urns which contain one ball among the $N$ urns. Then we chose $m_2$ urns which contain two balls among $N - m_1$ urns. We do that $p$ times, and so we get that the number of arrangements of $p$ balls into $N$ urns is also equal to \[\sum_{\substack{m_1,\ldots,m_p \in \mathbb{Z}_{\geq 0}\\ m_1+2m_2+\cdots+pm_p=p}}\binom{N}{m_1,\ldots,m_p,N-\sum_im_i}.\] 
Therefore, we have the following equality for all integers $N,p$ such that $p <N$, \[\sum_{\substack{m_1,\ldots,m_p \in \mathbb{Z}_{\geq 0}\\ m_1+2m_2+\cdots+pm_p=p}}\binom{N}{m_1,\ldots,m_p,N-\sum_im_i} = \binom{N -1 +p}{p},\]
which is equivalent to \[\sum_{\substack{m_1,\ldots,m_p \in \mathbb{Z}_{\geq 0}\\ m_1+2m_2+\cdots+pm_p=p}}\frac{p!}{m_1! \cdots m_p!} \frac{\Gamma(N+1)}{\Gamma(N +1-\sum_im_i)} = \frac{\Gamma(N+p)}{\Gamma(N)}.\]
So, we have equality between two polynomials with degrees less than $p$ for an infinite numbers of $N$, so they are equal everywhere. In particular, we get \eqref{combinatorial_equality}.
\end{proof}

Now, we can express the asymptotics of the $p$-th derivative of $S(\hat{z})$ in terms of $S(\hat{z})$ as $z \rightarrow1$. 
\begin{lemma} \label{lem3.7}
    For all $p \in \mathbb{Z}_{>0}$, there exists a constant $C_{\alpha,p}$ such that 
    \[(S(\hat{z}))^{(p)}\underset{z \to 1}{\sim} C_{\alpha,p} \hat{z}^{p+1}S'(\hat{z}) \underset{z \to 1}{\sim}  -\alpha C_{\alpha,p} \hat{z}^{p}S(\hat{z}),\]
where $(S(\hat{z}))^{(p)}$ is the $p$-th derivative of $S((1-z)^{-1})$ with respect to $z$ and $S^{(p)}(\hat{z})$ is the $p$-th derivative of $S$ evaluated at $\hat{z}$. 
   
\end{lemma}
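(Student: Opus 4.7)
The plan is to expand $(S(\hat{z}))^{(p)}$ via Faà di Bruno's formula applied to the composition $S \circ h$ where $h(z) = (1-z)^{-1} = \hat{z}$, and then apply the asymptotics from Lemma \ref{lemS} to each resulting term. Since $h^{(m)}(z) = m!\,(1-z)^{-(m+1)} = m!\,\hat{z}^{m+1}$, Faà di Bruno gives
\[
(S(\hat{z}))^{(p)} \;=\; \sum_{j=1}^{p} S^{(j)}(\hat{z})\, B_{p,j}\bigl(h'(z),h''(z),\ldots,h^{(p-j+1)}(z)\bigr),
\]
where $B_{p,j}$ is the partial Bell polynomial. Using the homogeneity property $B_{p,j}(\lambda\mu x_1,\lambda\mu^2 x_2,\ldots) = \lambda^j\mu^p B_{p,j}(x_1,x_2,\ldots)$ with $\lambda=\mu=\hat{z}$, this simplifies to
\[
(S(\hat{z}))^{(p)} \;=\; \sum_{j=1}^{p} B_{p,j}(1!,2!,\ldots,(p-j+1)!)\; \hat{z}^{\,p+j}\, S^{(j)}(\hat{z}).
\]

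Next I would invoke Lemma \ref{lemS} to obtain, for each $j \in [\![1,p]\!]$,
\[
S^{(j)}(\hat{z}) \underset{z\to 1}{\sim} \frac{\Gamma(-\alpha+1)}{\Gamma(-\alpha+1-j)}\,\frac{S(\hat{z})}{\hat{z}^{j}},
\]
so that every term on the right-hand side contributes the same order $\hat{z}^{p}S(\hat{z})$. Summing,
\[
(S(\hat{z}))^{(p)} \underset{z\to 1}{\sim} \biggl(\sum_{j=1}^{p} B_{p,j}(1!,\ldots,(p-j+1)!)\,\frac{\Gamma(-\alpha+1)}{\Gamma(-\alpha+1-j)} \biggr)\; \hat{z}^{\,p}\,S(\hat{z}).
\]
This gives the second asymptotic equivalence in the statement, with the constant
\[
C_{\alpha,p} \;\coloneqq\; -\frac{1}{\alpha}\sum_{j=1}^{p} B_{p,j}(1!,\ldots,(p-j+1)!)\,\frac{\Gamma(-\alpha+1)}{\Gamma(-\alpha+1-j)}.
\]

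For the first equivalence, I apply Lemma \ref{lemS} with $p=1$, which yields $\hat{z}\,S'(\hat{z})/S(\hat{z}) \to \Gamma(-\alpha+1)/\Gamma(-\alpha) = -\alpha$ via the functional equation $\Gamma(-\alpha+1) = -\alpha\,\Gamma(-\alpha)$. Hence $\hat{z}^{p+1}S'(\hat{z}) \sim -\alpha\, \hat{z}^{p} S(\hat{z})$, which links the two expressions and completes the statement.

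The main obstacle is really just the bookkeeping with Bell polynomials and gamma factors; no analytic surprise arises since everything reduces to Lemma \ref{lemS} once the Faà di Bruno expansion is written down, and the homogeneity of $B_{p,j}$ combined with the explicit form of $h^{(m)}$ makes all resulting terms balance at the same order $\hat{z}^p S(\hat{z})$.
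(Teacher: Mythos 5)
Your proof is correct and follows essentially the same route as the paper: you write $(S(\hat z))^{(p)}$ as a linear combination $\sum_{j=1}^p (\text{const})\,\hat z^{p+j}S^{(j)}(\hat z)$ and apply Lemma \ref{lemS} termwise, the only (cosmetic) difference being that you obtain this expansion via Faà di Bruno with explicit Bell-polynomial coefficients, whereas the paper gets it by induction on $p$ and normalizes first by $\hat z^{p+1}S'(\hat z)$ before converting to $\hat z^{p}S(\hat z)$. The only point left implicit — in your write-up as in the paper's — is that the limiting constant $\sum_{j=1}^{p}B_{p,j}(1!,\ldots,(p-j+1)!)\,\Gamma(1-\alpha)/\Gamma(1-\alpha-j)=(-1)^{p}\Gamma(\alpha+1)/\Gamma(\alpha+1-p)$ is nonzero (true since $\alpha\in(1,2)$ is not an integer), which is what makes the displayed relations genuine asymptotic equivalences.
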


\begin{proof}
Let us prove the first asymptotic equivalent. We define for all $m \geq 1$
 \[D_m(z) \coloneqq \biggl(\frac{1}{(1-z)^2}\biggr)^{(m-1)}S'((1-z)^{-1})) = \frac{m!}{(1-z)^{m+1}}S'((1-z)^{-1}))= m!\hat{z}^{m+1}S'(\hat{z}).\] We have
\[\textbf{ } (S(\hat{z}))'  = \hat{z}^2 S'(\hat{z}) = D_1(z) \]
and then
\[\textbf{ } (S(\hat{z}))^{(2)} =  \hat{z}^4 S^{(2)}(\hat{z}) + D_2(z).\]
By Lemma \ref{lemS}, we get \[\frac{(S(\hat{z}))^{(2)}}{D_2(z)}  = \frac{\hat{z}}{2}\frac{S^{(2)}(\hat{z})}{S'(\hat{z})} + 1 \xrightarrow[z \rightarrow 1]{} \frac{(\alpha - 1)}{2} + 1,\] 
so \[(S(\hat{z}))^{(2)}\underset{z \to 1}{\sim} \biggl(\frac{(\alpha - 1)}{2} + 1 \biggr) \textbf{ } D_2(z).\]
By induction on $p$, we get that for all $p \geq 1 $, there exist constants $c_1=p!,c_2,\ldots,c_p$ such that $(S(\hat{z}))^{(p)}$ equals \[\sum_{i=1}^p c_i \hat{z}^{i+p} S^{(i)}(\hat{z}) = D_{p} + \sum_{i=2}^p c_i\hat{z}^{i+p} S^{(i)}(\hat{z}).\]
Moreover, applying Lemma \ref{lemS}, we obtain that for all $i$ between $2$ and $p$, 
\[c_i\hat{z}^{i+p}\frac{ S^{(i)}(\hat{z})}{D_p} = \frac{c_i\hat{z}^{i-1}}{p!}\frac{S^{(i)}(\hat{z})}{S'(\hat{z})} \xrightarrow[z \rightarrow 1]{} \frac{c_i}{p!} \frac{\Gamma(-\alpha)}{\Gamma(-\alpha +1 - i )}.\]
Consequently, $(S(\hat{z}))^{(p)}$ is equivalent, when $z$ tends to $1$, to \begin{align*}
    \Bigl( 1 + \sum_{i=2}^p \frac{c_i}{p!} \frac{\Gamma(-\alpha)}{\Gamma(-\alpha +1 - i )} \Bigr) D_p
    & \underset{z \to 1}{\sim} \Bigl( p! + \sum_{i=2}^p c_i \frac{\Gamma(-\alpha)}{\Gamma(-\alpha +1 - i )} \Bigr) \hat{z}^{p+1}S'(\hat{z}) \\
    & \underset{z \to 1}{\sim} C_{\alpha,p}  \hat{z}^{p+1}S'(\hat{z}),
\end{align*}
where we set $C_{\alpha,p}=p! + \sum_{i=2}^p c_i \frac{\Gamma(-\alpha)}{\Gamma(-\alpha +1 - i )}$, so the first asymptotic equivalent follows. Note that for $p=1$ and $p=2$, $C_{\alpha,p}$ are respectively equal to $1$ and $-\alpha +1$.

Applying Lemma \ref{lemS} once more, we deduce that\[\frac{\hat{z}S'(\hat{z})}{S(\hat{z})} \xrightarrow[z \rightarrow 1]{} \frac{\Gamma(-\alpha +1)}{ \Gamma(-\alpha)} = - \alpha,\]
so $\hat{z}S'(\hat{z}) \underset{z \to 1}{\sim} -\alpha S(\hat{z})$, and the second asymptotic equivalent follows.
\end{proof}

Our goal now is to compute an equivalent of $G^a_{\mu}(z)$ as $z$ tends to $1$, in order to apply a transfer theorem to get an equivalent of $[z^{n-k-\phi_0(a)}] G^a_{\mu}(z)$ as $n$ tends to $+\infty$ and finally conclude using Lemma \ref{lemformuleRnk}.
\begin{lemma} \label{lem3.9}
For all $ a\in \mathbb{T}_k$, we have the following asymptotic equivalent
\[G^a_{\mu}(z) \underset{z \to 1}{\sim} c^{a}_{0,0,\phi_2(a)}\kappa(a)L^a(\hat{z})\hat{z}^{\psi_{0,0,\phi_2(a)}^a},\]
where $C_{\alpha,p}$ is as in Lemma \ref{lem3.7} and: 
\begin{align*} c^{a}_{i,j,p} & \coloneqq \binom{\phi_0(a)}{i}\binom{\phi_1(a)}{j}\binom{\phi_2(a)}{p}(1-\mu(0))^{\phi_0(a)-i}(1+m)^{\phi_1(a)-j}(2m)^{\phi_2(a)-p}(-\alpha)^j(\alpha^2-\alpha)^p \\
    \psi_{i,j,p}^a & \coloneqq -i\alpha - j(\alpha -1) - p(\alpha-2) - \alpha(k-\phi_0(a)-\phi_1(a) -\phi_2(a)) + (k-1-\phi_1(a) -2\phi_2(a)) \\
    \gamma_{i,j,p}^a & \coloneqq i+j+p+k-\phi_0(a)-\phi_1(a) - \phi_2(a)\\
    L^{a} & \coloneqq \ell^{\gamma_{0,0,\phi_2(a)}^a} \\
    \kappa(a) & \coloneqq\prod_{ \substack{ u \in [\![1,k]\!]\\
              c_u(a) \geq 3}} -\alpha C_{\alpha,c_u(a)}.
\end{align*}
 
\end{lemma}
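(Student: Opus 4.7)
The plan is to compute $G^a_{\mu}(z)$ as $z\to 1$ factor by factor, grouping the vertices of $a$ according to their number of children, and invoking Lemma \ref{lem3.7} to control the singular contributions. By definition,
\[G^a_{\mu}(z) \;=\; \widetilde{F}_{\mu}(z)^{\phi_0(a)}\,F_{\mu}'(z)^{\phi_1(a)}\,F_{\mu}''(z)^{\phi_2(a)}\prod_{\substack{u\in I(a)\\ c_u(a)\geq 3}}F_{\mu}^{(c_u(a))}(z),\]
so it suffices to determine the asymptotic behavior of each of the four types of factors and then to multiply them together.

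First I would analyze each factor using \eqref{defF}. Since $\widetilde{F}_{\mu}(z)=(F_{\mu}(z)-\mu(0))/z$ and $F_{\mu}(1)=1$, we have $\widetilde{F}_{\mu}(z)\to 1-\mu(0)$. For $c=1$, $F_{\mu}'(z)=1-m+2mz+(S(\hat{z}))'$ and Lemma \ref{lem3.7} with $p=1$ gives $(S(\hat{z}))'\sim -\alpha\,\ell(\hat{z})\,\hat{z}^{1-\alpha}\to 0$ (using $\alpha>1$), so $F_{\mu}'(z)\to 1+m$. For $c=2$, $F_{\mu}''(z)=2m+(S(\hat{z}))''$ and Lemma \ref{lem3.7} with $p=2$ yields $(S(\hat{z}))''\sim(\alpha^2-\alpha)\,\ell(\hat{z})\,\hat{z}^{2-\alpha}$, which diverges since $\alpha<2$, so $F_{\mu}''(z)\sim(\alpha^2-\alpha)\,\ell(\hat{z})\,\hat{z}^{2-\alpha}$. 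For $c_u(a)\geq 3$ the polynomial part of $F_{\mu}$ vanishes after three derivatives, whence $F_{\mu}^{(c_u(a))}(z)=(S(\hat{z}))^{(c_u(a))}\sim -\alpha\,C_{\alpha,c_u(a)}\,\ell(\hat{z})\,\hat{z}^{c_u(a)-\alpha}$, directly from Lemma \ref{lem3.7}.

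Multiplying the four contributions, the constant prefactor becomes
\[(1-\mu(0))^{\phi_0(a)}\,(1+m)^{\phi_1(a)}\,(\alpha^2-\alpha)^{\phi_2(a)}\prod_{u:\,c_u(a)\geq 3}\bigl(-\alpha\, C_{\alpha,c_u(a)}\bigr),\]
which is exactly $c^{a}_{0,0,\phi_2(a)}\kappa(a)$. The total exponent of $\ell(\hat{z})$ is $\phi_2(a)+\#\{u:c_u(a)\geq 3\}=k-\phi_0(a)-\phi_1(a)=\gamma^{a}_{0,0,\phi_2(a)}$, giving $L^{a}(\hat{z})=\ell(\hat{z})^{\gamma^{a}_{0,0,\phi_2(a)}}$. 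Finally, the total exponent of $\hat{z}$ equals $\phi_2(a)(2-\alpha)+\sum_{u:\,c_u(a)\geq 3}(c_u(a)-\alpha)$; using $\sum_{u\in I(a)}c_u(a)=k-1$ to rewrite $\sum_{u:\,c_u(a)\geq 3}c_u(a)=k-1-\phi_1(a)-2\phi_2(a)$ and $\#\{u:c_u(a)\geq 3\}=k-\phi_0(a)-\phi_1(a)-\phi_2(a)$, a short algebraic simplification shows that this exponent coincides with $\psi^{a}_{0,0,\phi_2(a)}$, completing the proof.

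The main obstacle is really just bookkeeping, but the one genuinely delicate point is the contrast between the cases $c_u(a)=1$ and $c_u(a)=2$: in the former the non-singular constant $1+m$ dominates the vanishing $(S(\hat{z}))'$, whereas in the latter the singular term $(S(\hat{z}))''$ dominates the constant $2m$. Confusing these two regimes would produce the wrong leading coefficient and an incorrect exponent in $\hat{z}$. Once Lemma \ref{lem3.7} is available and these boundary cases are treated carefully, the statement follows from a straightforward product of the asymptotics above together with the verification of the combinatorial identities defining $c^{a}_{0,0,\phi_2(a)}$, $\kappa(a)$, $L^{a}$, and $\psi^{a}_{0,0,\phi_2(a)}$.
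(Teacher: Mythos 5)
Your proof is correct and follows essentially the same route as the paper: the same factorization of $G^a_{\mu}$ according to the out-degrees, with Lemma \ref{lem3.7} supplying the singular behavior of the factors with $c_u(a)\geq 2$, and your bookkeeping of the constant $c^a_{0,0,\phi_2(a)}\kappa(a)$, the power $\gamma^a_{0,0,\phi_2(a)}$ of $\ell(\hat z)$ and the exponent $\psi^a_{0,0,\phi_2(a)}$ checks out. The only (harmless) difference is that the paper expands each power $\widetilde F_{\mu}^{\phi_0(a)}$, $(F_{\mu}')^{\phi_1(a)}$, $(F_{\mu}'')^{\phi_2(a)}$ into a full binomial sum and then isolates the dominant index $(0,0,\phi_2(a))$ via the monotonicity of $\psi^a_{i,j,p}$ and Potter's bounds, whereas you multiply the factor-wise leading equivalents directly, which is a legitimate shortcut since each factor has a nonzero constant or regularly varying leading term (in particular $1+m\neq 0$ because $m>-1$, and $\alpha^2-\alpha\neq 0$ because $1<\alpha<2$).
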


\begin{proof}
We have
\begin{equation} \label{eq3.2}
    G^a_{\mu}(z) = \widetilde{F}(z)^{\phi_0(a)} F'(z)^{\phi_1(a)} F^{(2)}(z)^{\phi_2(a)}\prod_{ \substack{ u \in [\![1,k]\!]\\
              c_u(a) \geq 3}}(S(\hat{z}))^{(c_u(a))},
\end{equation}
so let's study separately each term in \eqref{eq3.2} when $z$ tends to $1$.
First, observe that $\widetilde{F}(z)= (F(z)-\mu(0))/z$, so $\widetilde{F}(z) \underset{z \to 1}{\sim} F(z)-\mu(0)$. Consequently, we get $\widetilde{F}(z)^{\phi_0(a)} \underset{z \to 1}{\sim} (1-\mu(0)+S(\hat{z}))^{\phi_0(a)}$ and recalling that $S(\hat{z})=\ell(\hat{z})\hat{z}^{-\alpha}$, we deduce the following equality 
\[\widetilde{F}(z)^{\phi_0(a)}=\sum_{i=0}^{\phi_0(a)} \binom{\phi_0(a)}{i}(1-\mu(0))^{\phi_0(a)-i}\ell(\hat{z})^{\alpha i}\hat{z}^{-\alpha i}(1+o(1)).\]
Then, $F'(z)^{\phi_1(a)}$ is equal to \[\sum_{j=0}^{\phi_1(a)} \binom{\phi_1(a)}{j}(1 - m + 2mz)^{\phi_1(a)-j} ((S(\hat{z}))^{'})^j, \]
where according to Lemma \ref{lem3.7}, $(S(\hat{z}))^{'} \underset{z \to 1}{\sim} -\alpha\hat{z}S(\hat{z}) = - \alpha \ell(\hat{z})\hat{z}^{1-\alpha}$. Thus, we obtain that\[F'(z)^{\phi_1(a)} = \sum_{j=0}^{\phi_1(a)} \binom{\phi_1(a)}{j}(1 +m )^{\phi_1(a)-j}(- \alpha)^j \ell(\hat{z})^j \hat{z}^{-j(\alpha-1)}(1 +o(1)).\]
Finally, applying again Lemma \ref{lem3.7}, we get 
\begin{align*}
        F^{(2)}(z)^{\phi_2(a)} & = \sum_{p=0}^{\phi_2(a)} \binom{\phi_2(a)}{p} (2m)^{\phi_2(a)-p} ((S(\hat{z}))^{(2)})^p \\
        & = \sum_{p=0}^{\phi_2(a)} \binom{\phi_2(a)}{p} (2m)^{\phi_2(a)-p} (-\alpha C_{\alpha,2 })^p \ell(\hat{z})^p \hat{z}^{-p(\alpha-2)}(1 +o(1))
    \end{align*}
and
\begin{align*}
    \prod_{ \substack{ u \in [\![1,k]\!]\\
              c_u(a) \geq 3}}(S(\hat{z}))^{(c_u(a))} & = \prod_{ \substack{ u \in [\![1,k]\!]\\
              c_u(a) \geq 3}} -\alpha C_{\alpha,c_u(a)} \ell(\hat{z})\hat{z}^{-\alpha+c_u(a)}(1 +o(1)).
\end{align*}
Hence, by injecting these relations into Equation \eqref{eq3.2}, we get
\[G^a_{\mu}(z) = \kappa(a) \sum_{i=0}^{\phi_0(a)}\sum_{j=0}^{\phi_1(a)} \sum_{p=0}^{\phi_2(a)} c^{a}_{i,j,p}\ell(\hat{z})^{\gamma_{i,j,p}^a} \hat{z}^{\psi_{i,j,p}^a} (1 + o(1)).\]

Now, let's prove that there is a dominant term in this sum. First, since $1<\alpha<2$, the value of $\psi_{i,j,p}^a$ decreases with $i$ and $j$ and increases with $p$ so it uniquely achieves its maximum at $(0,0,\phi_2(a))$. Therefore, we can fix $\epsilon > 0$ such that
\begin{equation} \label{ineq1}
    \forall (i,j,p) \in [\![0,\phi_0(a)]\!]\times[\![0,\phi_1(a)]\!]\times[\![0,\phi_2(a)]\!]\setminus{(0,0,\phi_2(a))} \textbf{ , } \psi_{0,0,\phi_2(a)}^a -\epsilon > \psi_{i,j,p}^a + \epsilon.
\end{equation}
According to Theorem 1.5.6 (Potter's bounds) in \cite{BGT97}, for all $(i,j,p) \in [\![0,\phi_0(a)]\!]\times[\![0,\phi_1(a)]\!]\times[\![0,\phi_2(a)]\!]$, there exist $x_0$, $C^-_{i,j,p}$, $C^+_{i,j,p}$ such that for all $ \hat{z} \geq x_0$, 
\begin{equation*} 
    C^-_{i,j,p}\hat{z}^{-\epsilon} < \ell(\hat{z})^{\gamma^a_{i,j,p}} < C^+_{i,j,p}\hat{z}^{\epsilon}.
\end{equation*}
Then, for all $(i,j,p) \in [\![0,\phi_0(a)]\!]\times [\![0,\phi_1(a)]\!]\times[\![0,\phi_2(a)]\!]\setminus{(0,0,\phi_2(a))}$, 
\[\frac{\ell(\hat{z})^{\gamma^a_{i,j,p}}\hat{z}^{\psi_{i,j,p}^a}}{\ell(\hat{z})^{\gamma^a_{0,0,\phi_2(a)}}\hat{z}^{\psi_{0,0,\phi_2(a)}^a}}< \frac{C^+_{i,j,p}\hat{z}^{\psi_{i,j,p}^a+\epsilon}}{C^-_{0,0,\phi_2(a)}\hat{z}^{\psi_{0,0,\phi_2(a)}^a-\epsilon}} \xrightarrow[z \rightarrow 1]{} 0, \]
by Equation \eqref{ineq1}. So for all $(i,j,p) \in [\![0,\phi_0(a)]\!]\times[\![0,\phi_1(a)]\!]\times[\![0,\phi_2(a)]\!]\setminus{(0,0,\phi_2(a))}$, 
\[\ell(\hat{z})^{\gamma^a_{i,j,p}}\hat{z}^{\psi_{i,j,p}^a} = \underset{z \to 1}{o}\bigl(\ell(\hat{z})^{\gamma_{0,0,\phi_2(a)}^a}\hat{z}^{\psi_{0,0,\phi_2(a)}^a}\bigr),\]
and the lemma follows.
\end{proof}
 
\begin{lemma}\label{lem4.9}  We have the following asymptotic equivalent
\[ \sum_{a \in \mathbb{T}_k} \biggl(\zeta_k(a)\left[ z^{n-k-\phi_0(a)} \right] G^a_{\mu}(z) \biggr) \underset{n \to \infty}{\sim} \frac{-\alpha C_{\alpha,k-1}}{\Gamma(k)\Gamma(-\alpha+k-1)} \ell(n) n^{ k - \alpha  -2} \]
\end{lemma}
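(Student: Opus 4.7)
The strategy is to identify the single tree in $\mathbb{T}_k$ whose contribution dominates the sum and to apply a transfer theorem to convert the singular expansion of $G^a_\mu$ furnished by Lemma~\ref{lem3.9} into the corresponding coefficient asymptotic. Since $\mathbb{T}_k$ is finite, it suffices to compare the leading exponents $\psi^a_{0,0,\phi_2(a)}$ across $a \in \mathbb{T}_k$ and isolate the maximum.

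A direct rewriting gives
\[\psi^a_{0,0,\phi_2(a)} = -\alpha(k-\phi_0(a)) + (\alpha-1)\phi_1(a) + k-1,\]
so, since $1<\alpha<2$, maximizing this quantity amounts to minimizing the number $|I(a)| = k-\phi_0(a)$ of internal vertices of $a$. A short case analysis on $|I(a)|$, using the constraint $\sum_u c_u(a) = k-1$ and the fact that at least one branching vertex (with $\geq 2$ children) forces $\phi_1(a) \leq |I(a)|-1$, shows that for $k \geq 3$ the unique maximizer is the star $a=\ast_k$, with value $\psi^{\ast_k} = k-1-\alpha$; all other trees yield exponents strictly smaller than $k-1-\alpha$.

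Next, I would invoke Theorem~5 of \cite{FO90} applied to $G^{\ast_k}_\mu$. The hypothesis $\mathcal{H}$ on $\ell$ provides the required $\Delta$-analyticity near $z=1$, a property inherited by $G^{\ast_k}_\mu$ since by Lemmas~\ref{lemS}--\ref{lem3.7} it is a finite product of functions of the same analytic type (polynomials in $z$ plus $\ell(\hat z)$-multiples of powers of $\hat z$). Combined with $\ell(n-2k+1) \sim \ell(n)$ and $(n-2k+1)^{k-2-\alpha} \sim n^{k-2-\alpha}$, the transfer theorem converts the expansion of Lemma~\ref{lem3.9} into the coefficient asymptotic $\frac{c^{\ast_k}_{0,0,\phi_2(\ast_k)}\kappa(\ast_k)}{\Gamma(k-1-\alpha)}\ell(n)n^{k-2-\alpha}$; multiplying by $\zeta_k(\ast_k) = 1/(k-1)! = 1/\Gamma(k)$ and using $\kappa(\ast_k) = -\alpha C_{\alpha,k-1}$ yields the announced leading term. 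For each $a \in \mathbb{T}_k \setminus \{\ast_k\}$, the same application of the transfer theorem produces a contribution of the form $C_a\,\ell(n)^{\gamma^a_{0,0,\phi_2(a)}}\,n^{\psi^a-1}$ with $\psi^a < k-1-\alpha$; Potter's bounds on $\ell$, used as in the proof of Lemma~\ref{lem3.9}, then show that each such term is $o(\ell(n)n^{k-2-\alpha})$, and summing over the finite set $\mathbb{T}_k \setminus \{\ast_k\}$ preserves this bound.

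The main technical obstacle is verifying the hypotheses of the transfer theorem: one must check that $G^{\ast_k}_\mu$ and the other $G^a_\mu$ are genuinely $\Delta$-analytic on a common sector, and that the remainder in the singular expansion of Lemma~\ref{lem3.9} is controlled well enough (not merely a pointwise $o$-term at $z=1$) to feed into Theorem~5 of \cite{FO90}. Assumption $\mathcal{H}$ on $\ell$, combined with the differentiation estimates of Lemmas~\ref{lemS} and~\ref{lem3.7}, is precisely what makes this technical verification possible.
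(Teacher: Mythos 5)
Your proposal is correct and follows the same overall route as the paper: start from the singular expansion of Lemma \ref{lem3.9}, apply Theorem 5 of \cite{FO90} to transfer it into coefficient asymptotics for each $a\in\mathbb{T}_k$, identify $\ast_k$ as the unique tree maximizing the exponent $\psi^a_{0,0,\phi_2(a)}$, and absorb the slowly varying factors of the subdominant terms via Potter's bounds before summing over the finite set $\mathbb{T}_k$. The one place where you genuinely diverge from the paper is the proof that $\ast_k$ is the unique maximizer. The paper introduces the transformation $tr$ (reattaching the children of the first non-root internal vertex to the root) and shows that $f(a)=\alpha\phi_0(a)+(\alpha-1)\phi_1(a)$ strictly increases along $tr$ until $\ast_k$ is reached; you instead argue directly: writing $\psi^a_{0,0,\phi_2(a)}=k-1-\alpha I+(\alpha-1)\phi_1(a)$ with $I=k-\phi_0(a)$, a branching vertex forces $\phi_1(a)\leq I-1$, whence $(\alpha-1)\phi_1(a)\leq(\alpha-1)(I-1)<\alpha(I-1)$ for $I\geq 2$, while the only branchless tree is the path, for which the inequality reduces to $\alpha<k-1$, true for $k\geq 3$ and $\alpha<2$. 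This is shorter and self-contained, and the paper's iterative transformation buys nothing extra for this lemma, so either argument is fine. One nitpick: your opening phrase ``maximizing amounts to minimizing $|I(a)|$'' is not literally accurate, since the $(\alpha-1)\phi_1(a)$ term also enters with a positive coefficient; your subsequent case analysis is what actually carries the proof, and it is correct.

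A remark on the constant, shared with the paper's own write-up: after the transfer, the leading term is $\zeta_k(\ast_k)\,c^{\ast_k}_{0,0,\phi_2(\ast_k)}\,\kappa(\ast_k)\,\Gamma(k-1-\alpha)^{-1}\ell(n)n^{k-\alpha-2}$, and by the definition in Lemma \ref{lem3.9} one has $c^{\ast_k}_{0,0,\phi_2(\ast_k)}=(1-\mu(0))^{k-1}$ (coming from $\widetilde{F}_\mu(z)^{k-1}\to(1-\mu(0))^{k-1}$), a factor which does not cancel and which the displayed constant of Lemma \ref{lem4.9} omits. You assert that your expression ``yields the announced leading term'' without addressing this factor; the paper's proof does exactly the same, so this is not a gap in your argument relative to the paper, and it is harmless downstream because Lemma \ref{lemformuleRnk} only uses ratios in which the star's constant appears in both numerator and denominator — but it is worth being aware that the stated constant should carry $(1-\mu(0))^{k-1}$.
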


\begin{proof}
We have that $\psi_{0,0,\phi_2(\ast_k)}^{\ast_k} = k - \alpha -1 \notin \mathbb{Z}_{\geq0}$ for $1< \alpha < 2$, so by Lemma \ref{lem3.9}, we can apply Theorem 5 of \cite{FO90} to get that
\begin{equation}\label{eqtransfert1}
   \zeta_k(\ast_k)\left[ z^{n-k-\phi_0(\ast_k)} \right] G^{\ast_k}_{\mu}(z) \underset{n \to \infty}{\sim} \frac{\zeta_k(\ast_k) \kappa(\ast_k)}{\Gamma(-\psi_{0,0,\phi_2(\ast_k)}^{\ast_k})} \frac{L^{\ast_k}(n)}{n^{\psi_{0,0,\phi_2(\ast_k)}^{\ast_k} + 1}},
 \end{equation}
with $\zeta_k(\ast_k)= \Gamma(k)^{-1} $, $\phi_0(\ast_k) = k-1 $, $\psi_{0,0,\phi_2(\ast_k)}^{\ast_k} = \alpha -k +1$, $\kappa(\ast_k) = -\alpha C_{\alpha,k-1}$ and $L^{\ast_k}(n)=\ell(n)$. Moreover, applying again Theorem 5 of \cite{FO90} for all $ a \in \mathbb{T}_k, a \neq \ast_k$, we obtain
 \begin{equation}\label{eqtransfert2}
    \zeta_k(a) \left[ z^{n-k-\phi_0(\ast_k)} \right] G^{\ast_k}_{\mu}(z) =  \underset{n \to \infty}{o} \biggl(L^a(n)n^{\psi_{0,0,\phi_2(a)}^a-1}\biggr).
 \end{equation}
So, by adding Equations \eqref{eqtransfert1} and \eqref{eqtransfert2}, we get that $\sum_{a \in \mathbb{T}_k} \zeta_k(a)\left[ z^{n-k-\phi_0(a)} \right] G^{a}_{\mu}(z)$ is equal to
\begin{align*}
      \frac{-\alpha C_{\alpha,k-1}}{\Gamma(k)\Gamma(-\alpha+k-1)} \ell(n)n^{k- \alpha- 2} + o\Bigl(\ell(n)n^{k- \alpha- 2} \Bigr) + \sum_{\substack{ \text{ }a \in \mathbb{T}_k\\
               a \neq \ast_k}}o\Bigl(L^a(n)n^{\psi_{0,0,\phi_2(a)}^a-1} \Bigr).
\end{align*}
Since $\ell$ and $L^a$ are slowly varying, they can be bounded above and below by powers of $n$ that are arbitrarily close to $0$, see e.g. Theorem 1.5.6 in \cite{BGT97}. As we have $\psi_{0,0,\phi_2(\ast_k)}^{\ast_k} = k - \alpha - 1 $, our claim follow if we prove that $\psi_{0,0,\phi_2(a)}^a < \psi_{0,0,\phi_2(\ast_k)}^{\ast_k}$ for all $a \in \mathbb{T}_k \setminus \{\ast_k\}$. 

For all $a\in \mathbb{T}_k$, we define $f(a) =  \alpha\phi_0(a) + (\alpha - 1)\phi_1(a)$, so $\psi_{0,0,\phi_2(a)}^a = -k(\alpha -1) - 1 + f(a)$.
In order to maximize the function $f$, we define a transformation $tr : \mathbb{T}_k \rightarrow \mathbb{T}_k$ satisfying 
\begin{equation*}
    tr(\ast_k)=\ast_k, \quad \forall a \in \mathbb{T}_k \setminus \{\ast_k\} \text{, } f(tr(a)) < f(a) \quad \text{and }\quad \exists s' \in \mathbb{Z}_{\geq0} : \forall s \geq s', tr^{s}(a) = \ast_k.
\end{equation*}
Let us describe $tr$: If $a=\ast_k$ then $tr(a)=a$. If $a \neq \ast_k$, then there exists a vertex $u \neq \varnothing_a $ such that $c_u(a) \geq 1.$ We note $v$ the first one in the lexicographic order satisfying that. We get $tr(a)$ from $a$ by doing the following changes: We attach the offspring of $v$ on the right of the offspring of $\varnothing_a $ and we put $c_v(a) = 0$. We give an example of iterations of the transformation $tr$ in Figure \ref{Fig6}.
\begin{figure}[h] 
    \centering
    \includegraphics[scale = 1]{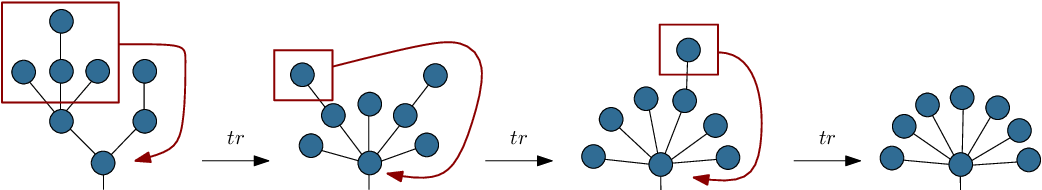}
    \caption{An example of iterations of $tr$.}
    \label{Fig6}
\end{figure}

Let $a\in \mathbb{T}_k$. We have that $\phi_0(tr(a)) = \phi_0(a) + 1$ and \[\phi_1(tr(a)) = \left\{
    \begin{array}{ll}
        \phi_1(a) - 2 & \mbox{if } c_v(a)=c_{\varnothing_a}(a)=1 \\
        \phi_1(a) - 1 & \mbox{if } (c_v(a)=1,c_{\varnothing_a}(a)\neq 1 ) \mbox{ or } (c_v(a)\neq1,c_{\varnothing_a}(a)=1 )\\
        \phi_1(a) & \mbox{else.} 
    \end{array}
\right.\]
Consequently, one can check that in all cases, $f(tr(a)) > f(a)$ for all $a \in \mathbb{T}_k \setminus \{\ast_k\}$, so the lemma follows.
\end{proof}

\begin{proof}[Proof of Theorem \ref{th3.2}.1]
Combining Equation (\ref{eqtransfert1}),(\ref{eqtransfert2}), Lemma \ref{lemformuleRnk} and \ref{lem4.9}, we get
\[\mathbb{P}(R_{n,k} = a) \xrightarrow[n \rightarrow +\infty]{} \left\{
    \begin{array}{ll}
        1  & \mbox{if $a = \ast_k$}\\
        0  & \mbox{otherwise,}
    \end{array}
\right. \]
which concludes the proof of the first point of Theorem \ref{th3.2}. 
\end{proof}

\subsection{The root has maximal degree}
We now aim to prove Theorem \ref{th3.2}.2. Recall that we denote by $c^o_i(a)$ the number of children of the $i$-th internal vertex, counted in lexicographic order, in a tree $a$.

\begin{proposition}\label{prop3.11} There exists $\delta > 0 $  such that
    \[\bigg|\frac{c^o_0(T_{n,k})}{n}-1\bigg|n^{\delta}\xrightarrow[n \rightarrow +\infty]{\mathbb{P}} 0. \]
\end{proposition}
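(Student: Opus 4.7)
The plan is to parallel the two-stage strategy of Propositions \ref{prop2.5}--\ref{prop2.6} from the local setting, adapted to the tail setting \eqref{defF}. The key observation is that Proposition \ref{prop2.3} makes no assumption on $\mu$, and that the hypothesis \eqref{defF} still yields, via the transfer theorem of \cite{FO90} (already invoked in Lemma \ref{lem4.9}), the pointwise asymptotic $\mu(n) \sim c_\alpha \ell(n)/n^{\alpha+1}$; equivalently, the law of $Y_1$ has a regularly varying density of index $-(\alpha+1)$ with $\alpha > 1$, which is precisely what feeds the tail estimate of \cite{AL11} used in the proof of Proposition \ref{prop2.5}.

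First, I would re-run the argument of Proposition \ref{prop2.5} in the present setting to obtain $d_{TV}(\mathbf{K}^{\downarrow}_{2:k}(T_{n,k}), \mathbf{Y}^{\downarrow}_{1:k-1}) \to 0$, and deduce, as there, that for every $\epsilon > 0$ there exists $M > 0$ such that the event
\[A_{M,n} \coloneqq \bigl\{|\mathbf{K}^{\downarrow}_1(T_{n,k}) - n| < M\text{ and }\|\mathbf{K}^{\downarrow}_{2:k}(T_{n,k})\| < M\bigr\}\]
has probability at least $1-\epsilon$ for $n$ large. On $A_{M,n}$, there is then a unique vertex $u^\star$ of $T_{n,k}$ with $c_{u^\star}(T_{n,k}) > n-M$.

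Next, I would reproduce verbatim the combinatorial comparison from the proof of Proposition \ref{prop2.6}: for every $r \in \mathbb{T}_k$, $u \in r$, and $C \in (n-M, n+M)$, the identity
\[\mathbb{P}(T \in \mathbb{T}_{n,k}, R=r, c_u(T)=C) = \binom{C}{c_u(r)} \mu(0)^{n-k} \mu(C) \sum_{(a_v) \in \mathcal{A}^{r,u}_{C,n}} \prod_{v \neq u} \binom{a_v}{c_v(r)} \mu(a_v)\]
holds for any $\mu$. Since $c_u(r) \leq k-1$ with equality precisely when $r = \ast_k$ and $u = \varnothing_r$, the binomial factor is of order $n^{k-1}$ in that case and of order at most $n^{k-2}$ otherwise, while the remaining sum is controlled on $A_{M,n}$ using $a_v < M$ exactly as in the proof of Proposition \ref{prop2.6}. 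Summing over the finitely many other pairs $(r,u)$ then gives
\[\mathbb{P}\bigl(R_{n,k} = \ast_k \text{ and } u^\star = \varnothing_{R_{n,k}} \bigm| A_{M,n}\bigr) \xrightarrow[n \to \infty]{} 1.\]

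On the resulting event, the root has maximum degree, so $c^o_0(T_{n,k}) = \mathbf{K}^{\downarrow}_1(T_{n,k})$ and hence $|c^o_0(T_{n,k}) - n| < M$. Since this happens with probability at least $1 - 2\epsilon$ for $n$ large, for any $\delta \in (0,1)$ and $\eta > 0$ one has
\[\mathbb{P}\bigl(|c^o_0(T_{n,k})/n - 1|\, n^{\delta} > \eta\bigr) \leq \mathbb{P}\bigl(|c^o_0(T_{n,k}) - n| > \eta n^{1-\delta}\bigr) \leq 2\epsilon\]
as soon as $\eta n^{1-\delta} > M$; letting $\epsilon \to 0$ gives the proposition for any $\delta \in (0,1)$. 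The main subtlety is the very first step: one must verify that the TV estimate of \cite{AL11}, invoked in Proposition \ref{prop2.5} under \eqref{hloc}, remains available under \eqref{defF}. This reduces to the pointwise regular variation $\mu(n) \sim c_\alpha \ell(n)/n^{\alpha+1}$ mentioned above, itself obtained by applying Theorem $5$ of \cite{FO90} to the decomposition of $F_\mu$ given by \eqref{defF}, exactly as in Lemma \ref{lem4.9}.
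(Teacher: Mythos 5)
There is a genuine gap, and it sits at the very first step of your plan: the claim that \eqref{defF} yields the \emph{pointwise} asymptotic $\mu(n)\sim c_\alpha\,\ell(n)/n^{\alpha+1}$ via Theorem 5 of \cite{FO90}. The hypothesis \eqref{defF} together with $\ell\in\mathcal{L}$ is a statement about $F_\mu$ near $z=1$, and the paper extracts from it only the tail estimate \eqref{propmu} (via the Tauberian argument of \cite{BGT97}); it is explicitly emphasized in the introduction that this tail setting covers offspring distributions that vanish infinitely often, for which no equivalent $\mu(n)\sim c_\alpha\ell(n)/n^{\alpha+1}$ can hold. The transfer theorem is applied in the paper only to the functions $G^a_\mu$, whose singular parts \emph{blow up} at $z=1$ (exponent $k-\alpha-1>0$ in \eqref{eqtransfert1}); recovering the decaying-order coefficients of $F_\mu$ itself (order $n^{-\alpha-1}$) is exactly the kind of local-limit statement that the assumptions of Section \ref{sec6} are designed not to require, and "exactly as in Lemma \ref{lem4.9}" does not deliver it. Once the local estimate is gone, the total-variation result of \cite{AL11} quoted in Proposition \ref{prop2.5} is unavailable: that result conditions on the \emph{exact} value of the sum and needs local (mass-function) regularity, not just a regularly varying tail. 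So the whole Proposition \ref{prop2.5}--\ref{prop2.6} route collapses under \eqref{defF}. In fact your end conclusion is suspiciously strong: you obtain $|c^o_0(T_{n,k})-n|=O_{\mathbb{P}}(1)$ and hence the proposition for every $\delta\in(0,1)$, whereas under a pure tail assumption the non-maximal degrees need not be tight (an atom pattern in $\mu$ can force the residual $n-1-c^o_0$ to be of polynomial order), which is why the paper only claims existence of some $\delta>0$ and restricts to $\delta<(\alpha-1)/(2\alpha)$.

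For comparison, the paper's proof avoids random-walk conditioning altogether. It first conditions on $R_{n,k}=\ast_k$, whose probability tends to $1$ by Theorem \ref{th3.2}.1 (itself proved through Lemma \ref{lemformuleRnk} and the transfer estimates), and whose joint probability with $T\in\mathbb{T}_{n,k}$ has the explicit asymptotics $c_k\mu(0)^{n-k}\ell(n)n^{k-\alpha-2}$. Then, in Lemma \ref{lem3.13}, the event that all internal degrees are $\leq(1-\epsilon n^{-\delta})n$ forces at least two of $k$ independent $\mu$-variables to exceed $n\epsilon_n/(2k)$, and this is bounded using only the tail estimate \eqref{propmu} and Potter's bounds; the choice $\delta<(\alpha-1)/(2\alpha)$ is exactly what makes this bound vanish. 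Finally the big vertex is identified as the root by the binomial-ratio comparison $\binom{\epsilon_n n}{k-1}/\binom{(1-\epsilon_n)n}{k-1}\to 0$. If you want to salvage your approach, you would have to add a genuine local hypothesis on $\mu$ (essentially \eqref{hloc}), but then you would be proving a different, narrower statement than Proposition \ref{prop3.11} in the setting of Section \ref{sec6}.
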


Observe that this implies $n^{\delta-1} \max_{1 \leq i \leq k-1}c^o_i(T_{n,k})\xrightarrow[n \rightarrow +\infty]{\mathbb{P}} 0$. Moreover, Proposition \ref{prop3.11} is in fact stronger than Theorem \ref{th3.2}.2. We will see in the proof why such a stronger statement is required.

In order to prove Proposition \ref{prop3.11}, we start by proving that the largest degree concentrates around $n$.

\begin{lemma}\label{lem3.13} There exists $ \delta > 0 $  such that
     \[ \biggl(\frac{1}{n}\max_{0 \leq i \leq k-1}c^o_i(T_{n,k}) - 1\biggr)n^{\delta} \xrightarrow[n \rightarrow +\infty]{\mathbb{P}} 0.\]
\end{lemma}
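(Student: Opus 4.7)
The plan is to adapt the strategy of Proposition \ref{prop2.5} to the tail setting by combining Proposition \ref{prop2.3} with a one-big-jump type result. First, Proposition \ref{prop2.3} gives the distributional identity
\[n - \max_{0 \leq i \leq k-1} c^o_i(T_{n,k}) \;\stackrel{(d)}{=}\; k + \sum_{j=2}^{k} \mathbf{Y}^\downarrow_j \quad \text{under } \mathbb{P}\Bigl(\,\cdot\, \Bigm| \sum_{j=1}^k Y_j = N\Bigr),\]
where $N = n - k - 1$ and the $Y_j$'s are i.i.d.\ with $\mathbb{P}(Y_1 = j) = \mu(j+1)/(1-\mu(0))$. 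Hence it suffices to show that $\sum_{j=2}^{k} \mathbf{Y}^\downarrow_j$ is $o_\mathbb{P}(n^{1-\delta})$ under this conditioning, for some $\delta > 0$.

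Next I would verify that $Y_1$ satisfies the hypotheses of Equation~(2.9) of \cite{AL11}. Applying the transfer theorem (Theorem~5 of \cite{FO90}) to the singular part $\ell(\hat z)(1-z)^{\alpha}$ of $F_\mu$, exactly as in the proof of Lemma \ref{lem3.9} and using the analytic assumption $\mathcal{H}$ on $\ell$, one obtains the local estimate $\mu(m) \sim -\ell(m)/(\Gamma(-\alpha)\,m^{1+\alpha})$ as $m \to \infty$. In particular $\mathbb{P}(Y_1 = m)$ is regularly varying of index $-(1+\alpha)$, the law of $Y_1$ is subexponential, and \cite{AL11} applies, yielding
\[d_{TV}\Bigl(\mathbf{Y}^\downarrow_{2:k} \text{ under the conditioning},\; \mathbf{Y}^\downarrow_{1:k-1}\Bigr) \;\xrightarrow[n \to \infty]{}\; 0,\]
exactly as in the proof of Proposition \ref{prop2.5}.

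Fix any $\delta \in (0,1)$. Since the sum $\sum_{j=1}^{k-1} \mathbf{Y}^\downarrow_j$ coincides with the unsorted sum $\sum_{j=1}^{k-1} Y_j$, the total variation bound above yields, for every $\epsilon > 0$,
\[\mathbb{P}\Bigl(\sum_{j=2}^k \mathbf{Y}^\downarrow_j > \epsilon\, n^{1-\delta} \,\Big|\, \sum_{j=1}^k Y_j = N\Bigr) \;\leq\; \mathbb{P}\Bigl(\sum_{j=1}^{k-1} Y_j > \epsilon\, n^{1-\delta}\Bigr) + o(1).\]
Since $\alpha > 1$, the $Y_j$'s have finite mean, so the unconditioned sum is tight and the right-hand side tends to $0$. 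Translating this back via the distributional identity of the first paragraph gives $(n - \max_i c^o_i)\, n^{\delta - 1} \to 0$ in probability, which is the statement.

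The main obstacle will be justifying the applicability of \cite{AL11} in this regime: one needs a local asymptotic on $\mu(m)$, not merely the cumulative tail estimate \eqref{propmu}, and obtaining it forces one to extract coefficients from the singular expansion of $F_\mu$ via Theorem~5 of \cite{FO90}. Once this local estimate and the resulting subexponentiality of $Y_1$ are in hand, the deduction is essentially a transcription of the argument of Proposition \ref{prop2.5} into the tail setting, and the conclusion holds for any $\delta \in (0,1)$.
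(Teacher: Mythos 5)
There is a genuine gap at the central step, namely the invocation of Equation (2.9) of \cite{AL11}. That result is a statement about conditioning on the \emph{exact} value of the sum, a local event, and it is used in Section \ref{sec5} precisely under the local hypothesis \eqref{hloc}, i.e. regular variation of the point masses $\mu(m)$ themselves. In the tail setting of Section \ref{sec6} no such local control is available: the hypothesis \eqref{defF} with $\ell \in \mathcal{L}$ constrains $F_{\mu}$ only in a region attached to the singularity $z=1$ (the image of the cone-complement in the $u$-variable, together with the open unit disk), not in a full $\Delta$-domain of radius larger than $1$, so $F_{\mu}$ may have further singularities on the unit circle and the coefficients $\mu(m)$ may oscillate or vanish along subsequences. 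The paper stresses exactly this point when comparing the two regimes (``Theorem \ref{th3.2} allows $\mu$ to vanish an infinite number of times''), and such behavior is incompatible with your claimed local estimate $\mu(m)\sim c\,\ell(m)m^{-1-\alpha}$; consequently the transfer argument ``as in Lemma \ref{lem3.9}'' does not yield it (Theorem 5 of \cite{FO90} is applied in the paper to the function $\ell(\hat z)\hat z^{\psi}$ built from the assumption $\mathcal{H}$, not to deduce coefficient-wise asymptotics of $F_{\mu}$ itself), and with it falls the regular variation of $\mathbb{P}(Y_1=m)$, the verification of the hypotheses of \cite{AL11}, and the total-variation step. A further warning sign is that your argument would give the conclusion for every $\delta\in(0,1)$, i.e. that the $k-1$ smallest outdegrees are $O_{\mathbb{P}}(1)$ under the conditioning; without local regularity of $\mu$ the conditional law given $\sum_j Y_j=n-k-1$ can be badly distorted by arithmetic/support effects, and no such statement is claimed in the paper. (Minor: the asymptotics you wrote has the wrong sign, since $\Gamma(-\alpha)>0$ for $\alpha\in(1,2)$, the correct form would be $\mu(m)\sim \ell(m)/(\Gamma(-\alpha)m^{1+\alpha})$.)

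For comparison, the paper's proof of Lemma \ref{lem3.13} avoids any local input: it conditions on $\{R_{n,k}=\ast_k\}$ (legitimate since Theorem \ref{th3.2}.1 is already proved), writes the conditional probability that all $k$ outdegrees are at most $(1-\epsilon_n)n$ as an explicit sum over degree sequences, uses the already-established generating-function asymptotics $\mathbb{P}(R=\ast_k, T\in\mathbb{T}_{n,k})\sim c_k\mu(0)^{n-k}\ell(n)n^{k-\alpha-2}$ for the denominator, and bounds the numerator by observing that on this event at least two of the $k$ i.i.d. $\mu$-distributed variables must exceed $n\epsilon_n/2k$, so that only the cumulative tail estimate \eqref{propmu} together with Potter's bounds is needed. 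This is exactly why the exponent $\delta=(\alpha-1-2\gamma)/2\alpha$ obtained there is small and specific, rather than arbitrary in $(0,1)$. If you want to salvage your route, you would have to add a local hypothesis on $\mu$, at which point you are back in the setting of Proposition \ref{prop2.5} rather than that of Theorem \ref{th3.2}.
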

\begin{proof} Since $\alpha > 1$ we may fix $ 0< \gamma <1 $ such that $ 1 + 2\gamma \leq  \alpha$. Set $\delta \coloneqq (\alpha -1 -2\gamma) /2\alpha >0$. Since $R_{n,k}=\ast_k$ with probability tending to $1$, by Theorem \ref{th3.2}.1, it is enough to prove that $(\max_{0 \leq i \leq k-1}c^o_i(T_{n,k})/n -1)n^{\delta}$ converges in probability to $0$ under the conditional probability $\mathbb{P}(\cdot|R_{n,k}=\ast_k)$ , i.e. that for all $\epsilon >0$, \[\mathbb{P}\biggl( \max_{0 \leq i \leq k-1}c^o_i(T_{n,k}) \leq (1-\epsilon n^{-\delta})n \,\Big| \,R_{n,k}=\ast_k \biggr) \xrightarrow[n \rightarrow +\infty]{} 0. \]
Set  $\epsilon_n \coloneqq \epsilon/n^{\delta}$, so that $\epsilon^{2\alpha} n^{1-\alpha+\gamma} = \epsilon_n^{2\alpha} n^{-\gamma }$. Then this probability equals
\begin{align*}
    & \mathbb{P}\Bigl( c^o_0(T_{n,k}) \leq (1-\epsilon_n)n ,\ldots,c^o_{k-1}(T_{n,k}) \leq (1-\epsilon_n)n \,\big| \,R_{n,k}=\ast_k\Bigr) \\
    & = \frac{1}{\mathbb{P}(R=\ast_k, T \in \mathbb{T}_{n,k})}\mathbb{P}\Bigl(c^o_0(T) \leq (1-\epsilon_n)n ,\ldots,c^o_{k-1}(T) \leq (1-\epsilon_n)n , R=\ast_k, T \in \mathbb{T}_{n,k} \Bigr) \\
    & =\frac{1}{\mathbb{P}(R=\ast_k, T \in \mathbb{T}_{n,k})} \sum_{\substack{ 1 \leq i_0,\ldots,i_{k-1} \leq n(1-\epsilon_n)\\ \sum i_j = n-1}} \mathbb{P}\Bigl( c^o_{0}(T)= i_0,\ldots,c^o_{k-1}(T)= i_{k-1}, R=\ast_k, T \in \mathbb{T}_{n,k} \Bigr) \\ 
    & = \frac{1}{\mathbb{P}(R=\ast_k, T \in \mathbb{T}_{n,k})} \sum_{\substack{ 1 \leq i_0,\ldots,i_{k-1} \leq n(1-\epsilon_n)\\ \sum i_j = n-1}} \binom{i_0}{k-1} \mu(i_0)\mu(i_1)\cdots\mu(i_{k-1})\mu(0)^{n-k}.
\end{align*}
According to Lemma \ref{lemformuleRnk} and Equation \eqref{eqtransfert1}, by setting $c_k \coloneqq \frac{\kappa(\ast_k)}{(k-1)!\Gamma(k-\alpha-1)}$, we have \[\mathbb{P}(R_{n,k}=\ast_k, T \in \mathbb{T}_{n,k}) \underset{n \to \infty}{\sim} c_k\mu(0)^{n-k}\ell(n)n^{ k -\alpha - 2}.\] Fix $\eta > 0$ arbitrary. Then for $n$ large enough, we get
\[\frac{\mu(0)^{n-k}}{\mathbb{P}(R_{n,k}=\ast_k, T \in \mathbb{T}_{n,k})} \leq \frac{1}{(1-\eta)c_k \ell(n) n^{ k -\alpha - 2}}.\]
Furthermore, we have $ \binom{i_0}{k-1} \leq n^{k-1} $ for every $ i_0 \leq n $ and 
\[\sum_{\substack{1 \leq i_0,...,i_{k-1} \leq n(1-\epsilon_n)\\ \sum i_j = n-1}} \mu(i_0)\mu(i_1)\cdots\mu(i_{k-1}) = \mathbb{P}\Bigl(X_0+\cdots+X_{k-1}=n-1, \forall j \in [\![0,k-1]\!] \textbf{ } 1 \leq X_j \leq n(1-\epsilon_n) \Bigr) \]
where the $X_j$'s are i.i.d. with law $\mu$. Observe that on the last event, at least two different $X_j$'s must be greater than $n\epsilon_n/2k$. Thus, we have for $n$ large enough,
\begin{align*}
   \mathbb{P}\biggl( \biggl| \frac{1}{n} \max_{0 \leq i \leq k-1}c^o_i(T_{n,k}) -1 \biggl| \geq \epsilon_n \, \Bigl| \,R_{n,k}=\ast_k\biggr)
   & \leq \frac{1}{(1-\eta)c_k} \frac{n^{\alpha + 1} }{\ell(n)}  \mathbb{P}\biggl(\exists i\neq j  \text{ st }  X_i, X_j \geq \frac{n\epsilon_n }{2k} \biggr) \\
   & \leq \frac{k^2}{(1-\eta)c_k} \frac{n^{\alpha + 1} }{\ell(n)}\mathbb{P}\biggl( X \geq \frac{n\epsilon_n }{2k} \biggr)^2.
\end{align*}
Recall that $\mathbb{P}( X \geq x) \underset{x \to \infty}{\sim} -\Gamma(1-\alpha)^{-1}\ell(x)x^{-\alpha}$ by Equation \eqref{propmu}, then
 \[\frac{n^{\alpha + 1} }{\ell(n)}\mathbb{P}\biggl( X \geq \frac{n\epsilon_n }{2k} \biggr)^2 \underset{n \to \infty}{\sim} \frac{1}{(\Gamma(1-\alpha))^2}\frac{ \ell(n\epsilon_n/2k)^2}{\ell(n)}  \frac{ n^{\alpha + 1}}{(n\epsilon_n/2k)^{2\alpha}}.\]
Moreover, since $\ell$ is slowly varying, we can apply Potter's bounds: there exists $C>0$ and $N>0$ such that for $n > N$, $\ell(n\epsilon_n/2k)^2/\ell(n)\leq C n^{\gamma}$. Therefore, we deduce that for every $n > N$,
\[ \frac{n^{\alpha + 1} }{\ell(n)}\mathbb{P}\biggl( X \geq \frac{n\epsilon_n }{2k} \biggr)^2 \leq \frac{C}{(\Gamma(1-\alpha))^2}\frac{ n^{\alpha + 1 + \gamma}}{(n\epsilon_n/2k)^{2\alpha}} = \frac{C}{(\Gamma(1-\alpha))^2(2k\epsilon)^{2\alpha}} n^{-\gamma} \xrightarrow[n \rightarrow +\infty]{} 0.\]
Finally, \[\mathbb{P}\biggl( \biggl| \frac{1}{n} \max_{0 \leq i \leq k-1}c^o_i(T_{n,k}) -1 \biggl| \geq \epsilon_n \,\Bigl| \,R_{n,k}=\ast_k\biggr) \xrightarrow[n \rightarrow +\infty]{} 0, \]
and the claim follows.
\end{proof}
We can now proceed to the proof of Proposition \ref{prop3.11}.
\begin{proof}[Proof of Proposition \ref{prop3.11}]
Set $\epsilon > 0$ and pick the same $\gamma$, $\delta$ and $\epsilon_n=\epsilon n^{-\delta}$ as in the proof of Lemma \ref{lem3.13}. For every $i \in [\![0,k-1]\!] $, we have
\[\mathbb{P}\Bigl( T \in \mathbb{T}_{n,k}, R=\ast_k, c^o_i(T) \geq (1-\epsilon_n)n \Bigr) = \sum_{\substack{a_0+\cdots+a_{k-1}=n-1\\ a_i \geq (1-\epsilon_n)n}} \binom{a_0}{k-1} \mu(a_0)\cdots\mu(a_{k-1})\mu(0)^{n-k}.\]
For $i=0$, we impose $a_0 \geq (1-\epsilon_n)n$ so $\binom{a_0}{k-1} \geq \binom{(1-\epsilon_n)n}{k-1}$ whereas for $i\geq 1$ we have $a_0 \leq \epsilon_nn$ so $\binom{a_0}{k-1} \leq \binom{\epsilon_nn}{k-1}$. Therefore
\[\frac{\mathbb{P}(T \in \mathbb{T}_{n,k}, R=\ast_k, \exists i \geq 1 \text{: } c^o_i(T) \geq (1-\epsilon_n)n) }{\mathbb{P}(T \in \mathbb{T}_{n,k}, R=\ast_k, c^o_0(T) \geq (1-\epsilon_n)n)}  \leq (k-1)\frac{\binom{\epsilon_nn}{k-1}}{\binom{(1-\epsilon_n)n}{k-1}} \xrightarrow[n \rightarrow +\infty]{} 0.\]
Using that $b/(a+b) \rightarrow 1$ when $a/b \rightarrow 0$, we obtain
\[\frac{\mathbb{P}(T \in \mathbb{T}_{n,k}, R=\ast_k, c^o_0(T) \geq (1-\epsilon_n)n ) }{\mathbb{P}(T \in \mathbb{T}_{n,k}, R=\ast_k,\exists i\geq 0 \text{: } c^o_i(T) \geq (1-\epsilon_n)n)}\xrightarrow[n \rightarrow +\infty]{} 1.\]
Dividing both the numerator and denominator by $\mathbb{P}\bigl(T \in \mathbb{T}_{n,k}, R=\ast_k \bigr)$, we observe that the conditional probability in the denominator goes to $1$ by Lemma \ref{lem3.13}. Therefore, the conditional probability in the numerator also converges to $1$. The claim then follows from Theorem \ref{th3.2}.1. \end{proof}

\section{Transfer case: Proof of Theorem \ref{th_cvg_transfert_case-intro}} \label{sec7}
This section is devoted to the proof of Theorem \ref{th_cvg_transfert_case-intro}. We consider an offspring distribution $\mu$ whose generating function $F_{\mu}$ has radius of convergence $\rho > 1$ and satisfies the following assumption for some $\alpha > 0$:
\begin{equation}\label{halpha}
``F_{\mu}(z)\text{ is } \Delta\text{-analytic } \text{and satisfies } F_{\mu}(z)\underset{z \to \rho}{\sim} \frac{c}{(1-z/\rho)^{\alpha}} \text{, }c \in \mathbb{R}",
    \tag{$\mathcal{H}_{\alpha}$}
\end{equation}
where we recall from e.g. \cite{F13} (Definition VI.1) that a function $G$ is $\Delta$-analytic when there exist a radius $R>\rho$ and an angle $\phi \in (0,\pi/2)$ such that $G$ is analytic on the domain $\{z:|z|<R,z\neq\rho,|\arg(z-\rho)|>\phi\}$.
Theorem \ref{th_cvg_transfert_case-intro} describes the asymptotic behavior of a $\mu$-BGW tree conditioned to have $n$ vertices and $k$ internal nodes, under the assumption that $F_{\mu}$ satisfies \eqref{halpha}, by describing the behavior of its reduced tree and the associated sequence of leaves (as explained in Remark \ref{rqbij2}).
It is interesting to note that by Lemma 4.1 of \cite{JJ12}, the limit of the reduced tree has the distribution of a $\nu$-BGW tree conditioned to have $k$ vertices, where the generating function of $\nu$ is given by $F_{\nu}(z)=(\alpha/(1+\alpha -z))^{\alpha}$.

\begin{proof}[Proof of Theorem \ref{th_cvg_transfert_case-intro}] We begin by proving the first assertion of Theorem \ref{th_cvg_transfert_case-intro}. Set $\alpha >0$ and $a \in \mathbb{T}_k$. Recall that $\widetilde{F}_{\mu}$ is the generating function of $\mu(\cdot+1)$ and that $G^a_{\mu}$ is defined as follows \[G^a_{\mu} (z)\coloneqq \prod_{u \notin I(a)}\widetilde{F}_{\mu} (z)\prod_{u \in I(a)} F_{\mu}^{(c_u(a))}(z). \]
The generating function $F_{\mu}$ satisfies \eqref{halpha}, so does $\widetilde{F}(z)$ as $\widetilde{F}(z)= (F(z)-\mu(0))/z \underset{z \to \rho}{\sim}(1-z/\rho)^{-\alpha}c/\rho$. Then, applying Theorem VI.8 of \cite{F13} we get that $G^a_{\mu}$ is $\Delta$-analytic as a product and derivatives of $\Delta$-analytic functions, and replacing dominations by negligibilities in this theorem, we obtain:
\[G^a_{\mu}(z) \underset{z \to \rho}{\sim} \frac{1}{\rho^{\phi_0(a)+k-1}} \frac{c^k}{\Gamma(\alpha)^k}\biggl( \prod_{u=1}^k \Gamma(\alpha + c_u(a)) \biggr) \bigl(1-z/\rho\bigl)^{-\alpha k-k+1}. \]
Thus by Corollary VI.1 of \cite{F13} we deduce that
\begin{equation}\label{eq1transfertcase}
    [z^{n-k-\phi_0(a)}]G^a_{\mu} \underset{n \to \infty}{\sim} \frac{c^k}{\rho^{n-1}\Gamma(\alpha)^k} \biggl( \prod_{u=1}^k \Gamma(\alpha + c_u(a)) \biggr) \frac{\bigl(n-k-\phi_0(a)\bigl)^{\alpha k+k-2}}{\Gamma(\alpha k +k-1)}.
\end{equation}
Recall from Lemma \ref{lemformuleRnk} that $\mathbb{P}(R_{n,k}=a)$ is proportional to $\zeta_k(a) [z^{n-k-\phi_0(a)}]G^a_{\mu}$. Then, we conclude that 
\[\mathbb{P}\bigl(R_{n,k}=a\bigr)\xrightarrow[n \rightarrow \infty]{}\frac{\prod_{u=1}^k w_u(a) }{\sum_{b \in \mathbb{T}_k} \prod_{u=1}^k w_u(b)},\]
and the first point of Theorem \ref{th_cvg_transfert_case-intro} follows.

We now turn to the proof of the second part of Theorem \ref{th_cvg_transfert_case-intro}. The argument is divided into two lemmas. We begin by establishing the first key estimate, stated in the following lemma.

\begin{lemma}\label{lemma1transfert}
    Set $\alpha >0$ and $a \in \mathbb{T}_k$. Let $\mu$ be an offspring distribution with generating function satisfying \eqref{halpha}. Then,
\begin{enumerate}
    \item conditionally given $ R_{n,k}=a$, we have the following convergence in distribution
    \[\frac{\bigl(X^n_1, \ldots, X^n_{k}\bigr)}{n} \xrightarrow[n \rightarrow +\infty]{(d)} \mathbf{Dir}\bigl(\alpha + c_1(R_{n,k}),\ldots,\alpha + c_k(R_{n,k})\bigr),\]
    where $X^n_i$ is the number of leaves attached to the $i$-th internal node of $T_{n,k}$. 
    \item conditionally given $ R_{n,k}=a$ and $(X^n_1, \ldots, X^n_{k})$, for every $i\in [\![1,k]\!] $, the vector \[\Bigl(L_{1+(\sum_{j=1}^{i-1}c_{j}(a)+1)}\bigl(T_{n,k}\bigr),\ldots, L_{c_i(a)+1+(\sum_{j=1}^{i-1}c_{j}(a)+1)}\bigl(T_{n,k}\bigr)\Bigr)\] is a composition of $n-k-(\sum_{j\neq i} X^n_j)$ into $c_i(a)+1$ parts sampled uniformly at random.
\end{enumerate}
\end{lemma}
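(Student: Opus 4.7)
The plan is to first identify the joint law of $(R_{n,k}, X^n_1,\ldots,X^n_k)$ from the BGW branching property, then deduce (2) by an exchangeability argument and (1) by an analytic Riemann-sum estimate. Writing $\mathbb{P}(T=t)=\prod_u \mu(c_u(t))$ and counting the $\prod_u \binom{c_u(a)+x_u}{c_u(a)}$ ways to interleave, at each internal vertex $u$ of $a$, the $c_u(a)$ ordered internal children with the $x_u$ leaves, one obtains
\[
\mathbb{P}\bigl(R=a,\, X^n=(x_1,\ldots,x_k),\, T\in\mathbb{T}_{n,k}\bigr)
 = \mu(0)^{n-k}\prod_{u=1}^k \binom{c_u(a)+x_u}{c_u(a)}\mu(c_u(a)+x_u),
\]
subject to $\sum_u x_u = n-k$, with $x_u\geq 1$ when $u$ is a leaf of $a$ and $x_u\geq 0$ otherwise. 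This formula is the multivariate refinement of the computation leading to Lemma \ref{lemformuleRnk}.

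Statement (2) is then immediate: conditionally on $R=a$ and $X^n$, each of the $\binom{c_u(a)+x_u}{c_u(a)}$ interleavings at vertex $u$ has equal weight, and an interleaving is equivalent to a choice of the number of leaves in each of the $c_u(a)+1$ corners around $u$. Hence the corresponding block of $L$-values is uniformly distributed on weak compositions of $x_u = n-k-\sum_{j\neq u}X^n_j$ into $c_u(a)+1$ parts, modulo the convention in the definition of $L$ that subtracts $1$ at a corner whose supporting vertex is a leaf of $R$.

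For statement (1), the key ingredient is the local asymptotic $\mu(n)\sim c\,n^{\alpha-1}\rho^{-n}/\Gamma(\alpha)$, which follows from \eqref{halpha} via the transfer theorem (Corollary VI.1 of \cite{F13}). Combined with $\binom{c_u(a)+x}{c_u(a)}\sim x^{c_u(a)}/c_u(a)!$, this yields
\[
w_u(x):=\binom{c_u(a)+x}{c_u(a)}\mu(c_u(a)+x)
 \;\sim\;\frac{c\,\rho^{-c_u(a)}}{c_u(a)!\,\Gamma(\alpha)}\,x^{c_u(a)+\alpha-1}\rho^{-x}\qquad(x\to\infty).
\]
For a bounded continuous $\varphi$ on the simplex $\Delta_k=\{y\in\mathbb{R}_{\geq 0}^k:\sum_u y_u=1\}$, the conditional expectation $\mathbb{E}[\varphi(X^n/n)\mid R_{n,k}=a]$ equals the ratio of $\sum_{x:\sum x_u=n-k}\varphi(x/n)\prod_u w_u(x_u)$ to the same sum with $\varphi\equiv 1$. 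The exponential factor $\rho^{-\sum x_u}=\rho^{-(n-k)}$ cancels between numerator and denominator, and after rescaling $y_u=x_u/n$ the ratio becomes a Riemann approximation of
\[
\frac{\int_{\Delta_k}\varphi(y)\prod_u y_u^{c_u(a)+\alpha-1}\,dy}{\int_{\Delta_k}\prod_u y_u^{c_u(a)+\alpha-1}\,dy},
\]
which is precisely the expectation of $\varphi$ under $\mathbf{Dir}(\alpha+c_1(a),\ldots,\alpha+c_k(a))$.

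The main obstacle is turning this Riemann-sum heuristic into a rigorous statement: the asymptotic for $w_u(x)$ is only valid for large $x$, and a product of $k$ such asymptotics is taken. I would handle this by truncating to $x_u\geq \eta n$ for small $\eta>0$, where uniform $(1+o(1))$ control is available, and bounding the contribution of the complementary region $\{\min_u x_u<\eta n\}$ using the explicit transfer-theorem asymptotic of the denominator already obtained in the proof of the first part of the theorem (Equation \eqref{eq1transfertcase}); letting $\eta\to 0$ after $n\to\infty$ and using continuity of $\mathbf{Dir}$ on $\Delta_k$ finishes the argument.
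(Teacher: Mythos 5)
Your proposal is correct and follows essentially the same route as the paper: the same exact product formula for the joint law of $(R,X^n_1,\ldots,X^n_k)$, the same transfer-theorem local estimate $\mu(m)\sim c\,m^{\alpha-1}\rho^{-m}/\Gamma(\alpha)$ for point (1), and point (2) by the uniformity of the leaf placements (interleavings) given the degrees. The only difference is cosmetic: the paper evaluates the local probability at points $y_n/n\to x$ and divides by the asymptotics of $\mathbb{P}(R=a,\,T\in\mathbb{T}_{n,k})$ from \eqref{eq1transfertcase}, invoking the standard fact that such a local estimate implies convergence in distribution, whereas you integrate against test functions and justify the Riemann-sum passage by a truncation near the boundary of the simplex; both are valid.
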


Finally, the second lemma explains how to combine the two points from Lemma \ref{lemma1transfert} to conclude the proof of Theorem \ref{th_cvg_transfert_case-intro}.

\begin{lemma}\label{lemma2transfert}
    Let $\alpha_1,\ldots,\alpha_n \in \mathbb{R^*_+}$. Set $X=(X_i)_{1\leq i\leq n}$ and $Y=(Y_i)_{1\leq i \leq k}$ random variables such that $X$ and $Y$ are independent,  $(X_i)_{1\leq i\leq n} \sim \mathbf{Dir}(\alpha_1,\ldots,\alpha_n)$ and $Y \sim \mathbf{Dir}(1,\ldots,1)$. Then, we have that:
    \[\bigl( X_1Y,X_2,\ldots,X_n\bigr) \sim \mathbf{Dir}\biggl(\Bigl(\frac{\alpha_1}{k}\Bigr)_{1\leq i \leq k}, \alpha_2,\ldots,\alpha_n\biggr). \]
\end{lemma}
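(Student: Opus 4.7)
The plan is to rely on the Gamma-variable representation of the Dirichlet distribution combined with the classical splitting (aggregation) identity. Take $G_1, \ldots, G_n$ independent with $G_i \sim \Gamma(\alpha_i, 1)$, set $S = G_1 + \cdots + G_n$ and $X_i = G_i/S$. Independently of the $G_i$'s, let $H_1, \ldots, H_k$ be i.i.d. $\Gamma(1, 1)$, set $T = H_1 + \cdots + H_k$ and $Y_j = H_j/T$, so that $Y \sim \mathbf{Dir}(1, \ldots, 1)$ as required by the hypothesis. The target vector then reads
\[
\Bigl(\frac{G_1 H_1/T}{S}, \ldots, \frac{G_1 H_k/T}{S}, \frac{G_2}{S}, \ldots, \frac{G_n}{S}\Bigr),
\]
where all $k + n - 1$ entries share the common denominator $S$, which is also the sum of the numerators $G_1 H_1/T, \ldots, G_1 H_k/T, G_2, \ldots, G_n$ (using $H_1 + \cdots + H_k = T$).

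The Gamma-to-Dirichlet conversion reduces the lemma to showing that the numerator vector $(G_1 H_1/T, \ldots, G_1 H_k/T, G_2, \ldots, G_n)$ consists of jointly independent Gamma random variables with parameters $(\alpha_1/k, \ldots, \alpha_1/k, \alpha_2, \ldots, \alpha_n)$. Since $G_2, \ldots, G_n$ are independent of $(G_1, H_1, \ldots, H_k)$ by construction, the real content is the law of $(G_1 H_1/T, \ldots, G_1 H_k/T)$, which depends only on the splitting block $(G_1, H_1, \ldots, H_k)$. One naturally hopes to apply the textbook splitting identity: if $Z \sim \Gamma(\sigma)$ and $(B_1, \ldots, B_k) \sim \mathbf{Dir}(\beta_1, \ldots, \beta_k)$ are independent with $\sum_j \beta_j = \sigma$, then $(ZB_1, \ldots, ZB_k)$ is a vector of independent $\Gamma(\beta_j)$'s.

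The main obstacle is that this splitting lemma, applied with $Z = G_1$ and $(B_j) = (H_j/T)$, demands $\alpha_1 = \sum_j \beta_j$, which by the symmetry of the $H_j$'s would force $\beta_j = \alpha_1/k$; yet the actual parameters of $Y$ are all $1$'s, summing to $k$ rather than to $\alpha_1$, so the standard splitting identity does not apply off the shelf. I would therefore proceed by an explicit change of variables: write down the joint density of $(G_1, H_1, \ldots, H_k)$, apply the bijection $(g, h_1, \ldots, h_k) \mapsto (g h_1/t, \ldots, g h_k/t, t)$ with $t = \sum h_i$, and integrate out $T$ to obtain the marginal density of $(G_1 H_j/T)_{1 \leq j \leq k}$. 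Comparing the resulting expression to the product of $\Gamma(\alpha_1/k)$-densities, together with the Beta-integral combinatorics coming from the symmetric $\mathbf{Dir}(1, \ldots, 1)$ structure of the $H_j$'s, is where the main work of the proof lies; once that identification is in place, appending the independent $G_2, \ldots, G_n$ and normalising by $S$ produces the Dirichlet distribution with the stated parameters.
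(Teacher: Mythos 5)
Your Gamma-variable setup is fine, and you have correctly isolated the real issue: the classical splitting identity needs the parameters of the splitting vector to sum to $\alpha_1$, whereas the parameters of $Y$ sum to $k$. The gap is what comes next: you defer the entire proof to an unexecuted change of variables and assume the resulting density will be identified with a product of $\Gamma(\alpha_1/k)$ densities. That identification is impossible, because the obstruction you flagged is not a technicality to be computed around: it makes the statement false whenever $\alpha_1\neq k$. This is visible inside your own construction: the block $(G_1H_1/T,\ldots,G_1H_k/T)$, renormalized by its own sum $G_1$, is exactly $(H_1/T,\ldots,H_k/T)=Y\sim\mathbf{Dir}(1,\ldots,1)$; if that block consisted of independent $\Gamma(\alpha_1/k)$ variables, the same renormalization would instead produce $\mathbf{Dir}(\alpha_1/k,\ldots,\alpha_1/k)$, which forces $\alpha_1=k$. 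Equivalently, writing $A=\sum_{j=1}^n\alpha_j$, independence gives
\[
\mathbb{E}\bigl[(X_1Y_1)^2\bigr]=\frac{\alpha_1(\alpha_1+1)}{A(A+1)}\cdot\frac{2}{k(k+1)},
\]
while the first marginal of $\mathbf{Dir}\bigl((\alpha_1/k)_{1\leq i\leq k},\alpha_2,\ldots,\alpha_n\bigr)$ is $\mathrm{Beta}(\alpha_1/k,\,A-\alpha_1/k)$, whose second moment is $(\alpha_1/k)(\alpha_1/k+1)/(A(A+1))$; the two agree only when $k=1$ or $\alpha_1=k$. The splitting identity you quoted is in fact the correct version of the statement: the conclusion holds if $Y\sim\mathbf{Dir}(\alpha_1/k,\ldots,\alpha_1/k)$, not $\mathbf{Dir}(1,\ldots,1)$. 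For comparison, the paper argues by matching joint moments rather than by a change of variables, but the final moment identity it invokes fails for the same reason (already for $\lambda_1^1=2$ with the other exponents zero), so the discrepancy you ran into is genuine and not an artifact of your method; no completion of your deferred computation can rescue the lemma as stated.
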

Combining these two lemmas, the second point of Theorem \ref{th_cvg_transfert_case-intro} follows directly. \end{proof}
It remains to prove Lemma \ref{lemma1transfert} and \ref{lemma2transfert}.
\begin{proof}[Proof of Lemma \ref{lemma1transfert}]
Fix $a \in \mathbb{T}_k$. Set $x_1,\ldots,x_k \in \mathbb{R}$ and $y_1,\ldots,y_k \in \mathbb{Z}_{\geq0}$ such that $\sum_iy_i=n-k$ and such that $y_i/n$ tends to $x_i$ when $n$ goes to infinity. We define $k$ random variables $X^n_1,\ldots,X^n_k$ such that $X^n_i$ is the number of leaves of the $i$-th internal node in $T_{n,k}$. We have that \[\mathbb{P}\bigl((X^n_1,\ldots,X^n_k)=(y_1,\ldots,y_k)\,\big| R_{n,k}=a\,\bigr) = \frac{\mathbb{P}\bigl((X^n_1,\ldots,X^n_k)=(y_1,\ldots,y_k),R=a,T \in \mathbb{T}_{n,k}\bigr)}{\mathbb{P}\bigl(R=a,T \in \mathbb{T}_{n,k}\bigl)},\]
where, combining Lemma \ref{lemformuleRnk} and Equation \eqref{eq1transfertcase}, the denominator satisfies
\[\mathbb{P}(R=a,T \in \mathbb{T}_{n,k})\underset{n \to \infty }{\sim} \frac{\zeta_k(a)c^k\mu(0)^{n-k}}{\rho^{n-1}\Gamma(\alpha k +k-1)\Gamma(\alpha)^k} \biggl( \prod_{u=1}^k \Gamma(\alpha + c_u(a)) \biggr) n^{\alpha k+k-2}.\] Now let us compute the numerator. We have that
 \[\mathbb{P}\bigl((X^n_1,\ldots,X^n_k)=(y_1,\ldots,y_k),R=a,T \in \mathbb{T}_{n,k}\bigr) = \mu(0)^{n-k}\prod_{u=1}^k \binom{y_u+c_u(a)}{c_u(a)}\mu(y_u+c_u(a)),\]
 and by Corollary VI.1 of \cite{F13}, we get that this is equivalent as $n$ tends to infinity to 
 \[\mu(0)^{n-k}\prod_{u=1}^k \binom{y_u+c_u(a)}{c_u(a)}\frac{c(y_u+c_u(a))^{\alpha-1}}{\Gamma(\alpha)\rho^{y_u+c_u(a)}} \underset{n \to \infty }{\sim} \frac{c^k\mu(0)^{n-k}}{\Gamma(\alpha)^k \rho^{n-1}}\zeta_k(a) \prod_{u=1}^k y_u^{c_u(a)+\alpha-1}. \]
 As $y_i\sim nx_i$, we deduce that
 \[\mathbb{P}\bigl((X^n_1,\ldots,X^n_k)=(y_1,\ldots,y_k),R=a,T \in \mathbb{T}_{n,k}\bigr)\underset{n \to \infty }{\sim} \frac{c^k\mu(0)^{n-k}}{\Gamma(\alpha)^k \rho^{n-1}}\zeta_k(a) n^{\alpha k-1}\prod_{u=1}^k x_u^{c_u(a)+\alpha-1}.\]
 Finally, we obtain that 
 \[\mathbb{P}\bigl((X^n_1,\ldots,X^n_k)=(y_1,\ldots,y_k)\,\big| R_{n,k}=a\,\bigr) \underset{n \to \infty }{\sim} \frac{1}{n^{k-1}}\frac{\Gamma \bigl( \sum_{u=1}^k c_u(a) + \alpha\bigr)}{\prod_{u=1}^k\Gamma(c_u(a) + \alpha)} x_1^{c_1(a)+\alpha-1} \cdots x_k^{c_k(a)+\alpha-1}.\]
 This local estimate implies the desired convergence in distribution so we get the lemma. 
\end{proof}

\begin{proof}[Proof of Lemma \ref{lemma2transfert}]
The Dirichlet distribution is characterized by its moments, so it is enough to prove that for all integers $\lambda_1^1,\ldots,\lambda_1^k,\lambda_2,\ldots,\lambda_n$, we have the following equality \[\mathbb{E}\Bigl( \bigl(X_1Y_1\bigl)^{\lambda_1^1}\cdots\bigr(X_1Y_k\bigl)^{\lambda_1^k}X_2^{\lambda_2}\cdots X_n^{\lambda_n} \Bigr) = \mathbb{E}\Bigl( U_1^{\lambda_1^1}\cdots U_k^{\lambda_1^k}X_2^{'\lambda_2}\cdots X_n^{'\lambda_n} \Bigr),\]
where $(U_1,\ldots,U_k,X'_2,\ldots,X'_n)\sim \mathbf{Dir}((\alpha_1/k)_{1\leq i \leq k}, \alpha_2,\ldots,\alpha_n)$.
As $Y$ is independent of $X$, we have that \[\mathbb{E}\Bigl( \bigl(X_1Y_1\bigl)^{\lambda_1^1}\cdots \bigr(X_1Y_k\bigl)^{\lambda_1^k} X_2^{\lambda_2} \cdots X_n^{\lambda_n} \Bigr) = \mathbb{E}\Bigl(Y_1^{\lambda_1^1}\cdots Y_k^{\lambda_1^k}\Bigl)\mathbb{E}\Bigl(X_1^{\lambda_1^1+\cdots+\lambda_1^k}X_2^{\lambda_2}\cdots X_n^{\lambda_n} \Bigr),\]
with
\begin{align*}
\mathbb{E}\Bigl(Y_1^{\lambda_1^1}\cdots Y_k^{\lambda_1^k}\Bigl) & = \frac{\Gamma(k)\prod_{i=1}^k\Gamma(\lambda_1^i+1)}{\Gamma\bigl(\sum_{i=1}^k(\lambda_1^i+1)\bigr)}, \\
\mathbb{E}\Bigl(X_1^{\lambda_1^1+\cdots+\lambda_1^k} X_2^{\lambda_2}\cdots X_n^{\lambda_n} \Bigr) & = \frac{\Gamma\bigl(\sum_{j=1}^n\alpha_j\bigr) \Gamma\bigl(\alpha_1+\sum_{i=1}^k\lambda_1^i\bigr)\prod_{j=2}^n\Gamma(\alpha_j+\lambda_j)}{\Gamma\bigl(\alpha_1+\sum_{i=1}^k\lambda_1^i+\sum_{j=2}^n(\lambda_j+\alpha_j)\bigr)\prod_{j=1}^n\Gamma(\alpha_j)}.
\end{align*}

As \[\mathbb{E}\Bigl( U_1^{\lambda_1^1} \cdots U_k^{\lambda_1^k} X_2^{'\lambda_2} \cdots X_n^{'\lambda_n} \Bigr)=\frac{\Gamma\bigl(\sum_{j=1}^n\alpha_j\bigr)\prod_{i=1}^k\Gamma\bigl(\lambda_1^i+\frac{\alpha_1}{k}\bigr)\prod_{j=2}^n\Gamma\bigl(\lambda_j+\alpha_j\bigr)}{\Gamma\bigl(\alpha_1+\sum_{i=1}^k\lambda_1^i+\sum_{j=2}^n(\lambda_j+\alpha_j)\bigr)\Gamma\bigl(\frac{\alpha_1}{k}\bigr)\prod_{j=2}^n\Gamma\bigl(\alpha_j\bigr)},\]
using that for all real $z$, $\Gamma(z+1)=z\Gamma(z)$, we get the desired equality. 
\end{proof}

\section{Poisson case: Proof of Theorem \ref{thPois-intro}} \label{sec8}
The aim of this section is to prove Theorem \ref{thPois-intro}. We consider $\mu$ an offspring distribution with generating function satisfying the assumption \eqref{hp} defined by
\begin{equation}\label{hp}
\text{``}F_{\mu}(z) = c\exp{(P(z))} \text{ where } c>0, P(z) \coloneqq \sum_{i=1}^pa_iz^i, a_i \geq 0 \text{ and } \mathrm{gcd}\{j:a_j\neq0\}=1 \text{''}
\tag{$\mathcal{H}_{P}$}
\end{equation}

Theorem \ref{thPois-intro} shows that under assumption \eqref{hp}, the reduced tree of a $\mu$-BGW tree conditioned to have $n$ vertices and $k$ internal nodes converges in distribution to an explicit simply generated tree, along with the way the leaves are attached to it.

The third assertion of Theorem \ref{thPois-intro} is an immediate consequence of the first two and the definition of a BGW tree.

To establish the first point, it suffices to prove the following result.
\begin{proposition}\label{proppois}
Let $\mu$ be an offspring distribution with generating function satisfying \eqref{hp}. Then for all $a,b \in \mathbb{T}_k $, \[[z^{n-k-\phi_0(a)}]G^a_{\mu}(z) \underset{n \to \infty}{\sim} [z^{n-k-\phi_0(b)}]G^b_{\mu}(z). \]
\end{proposition}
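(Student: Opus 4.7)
The plan is to exploit the identity $F_\mu=ce^{P}$ to rewrite $G^a_\mu(z)$ as an $a$-dependent polynomial factor times an exponential factor that is essentially universal. By Faà di Bruno's formula, $F_\mu^{(j)}(z)=F_\mu(z)R_j(z)$, where $R_j(z)\coloneqq B_j(P'(z),\ldots,P^{(j)}(z))$ is the complete Bell polynomial evaluated at the derivatives of $P$ (hence a polynomial in $z$). Combined with $\widetilde{F}_\mu(z)=c(e^{P(z)}-1)/z$, this gives
\[z^{\phi_0(a)}G^a_\mu(z)=c^k\,(e^{P(z)}-1)^{\phi_0(a)}\,e^{(k-\phi_0(a))P(z)}\,\pi_a(z),\qquad\pi_a(z)\coloneqq\prod_{u\in I(a)}R_{c_u(a)}(z).\]
Expanding $(e^{P(z)}-1)^{\phi_0(a)}=\sum_{j=0}^{\phi_0(a)}(-1)^{\phi_0(a)-j}\binom{\phi_0(a)}{j}e^{jP(z)}$ and setting $N_j\coloneqq k-\phi_0(a)+j\in\{k-\phi_0(a),\ldots,k\}$ yields
\[[z^{n-k-\phi_0(a)}]G^a_\mu(z)=c^k\sum_{j=0}^{\phi_0(a)}(-1)^{\phi_0(a)-j}\binom{\phi_0(a)}{j}\,[z^{n-k}]\bigl(\pi_a(z)\,e^{N_jP(z)}\bigr).\]

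The crucial observation is that $\pi_a$ has the same degree and leading coefficient for every $a\in\mathbb{T}_k$. The monomial $x_1^j$ appears in $B_j$ with coefficient $1$, and after substituting $x_i=P^{(i)}(z)$ (of degree $p-i$) the $x_1^j$ term $P'(z)^j$ is the unique contribution of maximal degree $j(p-1)$. Since $P'(z)$ has leading coefficient $pa_p$, $R_j$ has degree $j(p-1)$ and leading coefficient $(pa_p)^j$. Using $\sum_{u\in I(a)}c_u(a)=k-1$, which holds for every $a\in\mathbb{T}_k$, it follows that $\pi_a$ has degree $d\coloneqq(k-1)(p-1)$ and leading coefficient $(pa_p)^{k-1}$, both \emph{independent of $a$}. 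The function $e^{kP(z)}$ is Hayman-admissible (the $\gcd=1$ hypothesis on $P$ is inherited by $kP$), so its coefficients are eventually positive and satisfy $[z^{m-1}]e^{kP(z)}/[z^m]e^{kP(z)}\to\infty$; therefore
\[[z^{n-k}]\bigl(\pi_a(z)e^{kP(z)}\bigr)=\sum_{l=0}^{d}[z^l]\pi_a\cdot[z^{n-k-l}]e^{kP(z)}\underset{n\to\infty}{\sim}(pa_p)^{k-1}\,[z^{n-k-d}]e^{kP(z)},\]
a quantity that does not depend on $a$.

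It remains to discard the terms with $N_j<k$. Let $r=r_n\to\infty$ be the saddle radius of $e^{kP(z)}$ at level $n-k-d$, i.e.\ the solution to $rkP'(r)=n-k-d$; note $P(r)\to\infty$. Cauchy's inequality on $|z|=r$ gives $[z^m]e^{NP(z)}\leq r^{-m}e^{NP(r)}$, while Hayman's theorem (see e.g.\ Chapter VIII of \cite{F13}) yields the matching asymptotic $[z^{n-k-d}]e^{kP(z)}\sim r^{-(n-k-d)}e^{kP(r)}/\sqrt{2\pi b'(r)r}$ with $b(r)\coloneqq rkP'(r)$. Bounding $|\pi_a|$ on $|z|=r$ by $O(r^d)$ gives
\[\frac{\bigl|[z^{n-k}](\pi_a(z)e^{N_jP(z)})\bigr|}{[z^{n-k-d}]e^{kP(z)}}=O\bigl(r^{O(1)}\,e^{(N_j-k)P(r)}\bigr)\longrightarrow 0\qquad\text{for }N_j<k,\]
since $N_j-k<0$ and $P(r)\to\infty$. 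Combining the three steps,
\[[z^{n-k-\phi_0(a)}]G^a_\mu(z)\underset{n\to\infty}{\sim}c^k(pa_p)^{k-1}\,[z^{n-k-d}]e^{kP(z)},\]
which is independent of $a\in\mathbb{T}_k$ and yields the proposition.

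The main obstacle is the uniform saddle-point comparison in the last step: one needs estimates on $[z^m]e^{NP(z)}$ for several $N\leq k$ simultaneously, while keeping track of the $a$-dependent polynomial $\pi_a$. The key algebraic fact that smooths everything out is that the degree and top coefficient of $\pi_a$ are governed only by $\sum_{u\in I(a)}c_u(a)=k-1$, hence are universal; the analytic estimates then reduce to standard admissibility/saddle-point bounds for $e^{kP(z)}$.
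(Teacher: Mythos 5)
Your proof is correct and follows essentially the same route as the paper: factor $G^a_\mu$ into an $a$-dependent polynomial (your $\pi_a$, the paper's $Q_a$) times binomially expanded powers of $F_\mu=ce^{P}$, observe that the polynomial's degree $(k-1)(p-1)$ and leading coefficient $(pa_p)^{k-1}$ are universal because $\sum_{u}c_u(a)=k-1$, and then use Hayman/saddle-point coefficient asymptotics of $e^{mP}$ to isolate the universal dominant term $c^k(pa_p)^{k-1}[z^{n-k-d}]e^{kP}$. The only differences are cosmetic (Bell polynomials in place of the paper's $P_j$, and a Cauchy-bound-versus-saddle comparison to kill the lower powers $e^{N_jP}$, $N_j<k$, where the paper instead compares $e^m_n$ with $e^{m+1}_n$ via its Lemma on coefficient ratios).
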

\begin{proof}[Proof of Theorem \ref{thPois-intro}.1]
Let us assume this result and see how it leads to the first point of Theorem \ref{thPois-intro}. Applying Lemma \ref{lemformuleRnk}, we get that for all $a \in \mathbb{T}_k$
\[\mathbb{P}(R_{n,k} = a) \underset{n \to \infty}{\sim} \frac{\zeta_k(a)[z^{n-k-\phi_0(a)}]G^a_{\mu}(z)}{\sum_{b \in \mathbb{T}_k} \zeta_k(b)[z^{n-k-\phi_0(b)}]G^b_{\mu}(z)} \xrightarrow[n \rightarrow \infty]{} \frac{\zeta_k(a)}{\sum_{b \in \mathbb{T}_k} \zeta_k(b)},\]
so we get the result.
\end{proof}
Now, let us prove Proposition \ref{proppois}. We divide the proof of Proposition \ref{proppois} into two lemmas.
\begin{lemma} \label{lempolyetF}
Let $\mu$ be an offspring distribution with generating function satisfying \eqref{hp}.
\begin{enumerate}
    \item For all $a \in \mathbb{T}_k$, \[G^a_{\mu}(z) = Q_a(z) \sum_{j=0}^{\phi_0(a)} \binom{\phi_0(a)}{j}\mu(0)^jF_{\mu}^{k-j}(z),\] where $Q_a(z) \coloneqq z^{-\phi_0(a)} \sum_{i=0}^{d_a} q_{i}^a z^i = \sum_{i=-\phi_0(a)}^{d_a-\phi_0(a)} q_{i+\phi_0(a)}^a z^i$, $q_{i}^a \in \mathbb{R}$ for all $i$.
    \item For all $a \in \mathbb{T}_k$, $d_a= (k-1)(p-1)$ so $Q_a(z)$ has degree $(k-1)(p-1)-\phi_0(a)$ and $q_{d_a}^a=(pa_p)^{k-1}$.
\end{enumerate}
\end{lemma}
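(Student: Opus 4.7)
The plan is to substitute the exponential form $F_{\mu}(z)=ce^{P(z)}$ directly into the definition of $G^a_{\mu}$, using Faà di Bruno's formula to turn each derivative into a polynomial multiple of $F_{\mu}$ itself.

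First I would establish the following derivative identity. Since $F_{\mu}(z)=ce^{P(z)}$, a straightforward induction (or the Bell-polynomial form of Faà di Bruno) gives, for every $m\geq 0$,
\[
F_{\mu}^{(m)}(z) \;=\; R_m(z)\,F_{\mu}(z), \qquad R_m(z)=B_m\!\bigl(P'(z),P''(z),\ldots,P^{(m)}(z)\bigr),
\]
where $B_m$ is the $m$-th complete exponential Bell polynomial. In particular $R_m$ is a polynomial in $z$, and since $\deg P'=p-1$ with leading coefficient $pa_p$, while $\deg P^{(j)}=p-j$ for $j\leq p$, one checks that the unique term of maximal degree in $R_m$ comes from the pure partition $1+1+\cdots+1$, i.e.\ from $(P')^m$. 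Hence
\[
\deg R_m = m(p-1), \qquad [z^{m(p-1)}]\,R_m(z)=(pa_p)^m.
\]

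Next I would plug this into the definition of $G^a_{\mu}$. Using $\widetilde F_{\mu}(z)=(F_{\mu}(z)-\mu(0))/z$ and the derivative formula above, together with $|I(a)|=k-\phi_0(a)$,
\[
G^a_{\mu}(z) \;=\; z^{-\phi_0(a)}\bigl(F_{\mu}(z)-\mu(0)\bigr)^{\phi_0(a)} F_{\mu}(z)^{k-\phi_0(a)} \prod_{u\in I(a)} R_{c_u(a)}(z).
\]
Expanding the middle factor by the binomial theorem,
\[
\bigl(F_{\mu}(z)-\mu(0)\bigr)^{\phi_0(a)} F_{\mu}(z)^{k-\phi_0(a)} \;=\; \sum_{j=0}^{\phi_0(a)} \binom{\phi_0(a)}{j} (-\mu(0))^{j} F_{\mu}^{k-j}(z),
\]
which (up to the sign convention $(-\mu(0))^j$ as in the statement, absorbed if needed into $Q_a$) factors out a common sum $\sum_{j=0}^{\phi_0(a)}\binom{\phi_0(a)}{j}\mu(0)^{j}F_{\mu}^{k-j}(z)$. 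Setting
\[
Q_a(z)\;\coloneqq\; z^{-\phi_0(a)}\prod_{u\in I(a)} R_{c_u(a)}(z),
\]
yields the desired factorization for part (1), where $z^{\phi_0(a)} Q_a(z)$ is a genuine polynomial in $z$.

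Finally, for part (2), I would use the handshake identity $\sum_{u\in a} c_u(a)=k-1$: as leaves contribute $0$, this gives $\sum_{u\in I(a)} c_u(a)=k-1$. Consequently
\[
\deg\!\Bigl(\prod_{u\in I(a)} R_{c_u(a)}\Bigr) \;=\; \sum_{u\in I(a)} c_u(a)\,(p-1) \;=\; (k-1)(p-1),
\]
so $d_a=(k-1)(p-1)$ and $\deg Q_a=(k-1)(p-1)-\phi_0(a)$. For the leading coefficient, the degrees of the $R_{c_u(a)}$ add up exactly, so
\[
q_{d_a}^a \;=\; \prod_{u\in I(a)} (pa_p)^{c_u(a)} \;=\; (pa_p)^{\,k-1}.
\]

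The main obstacle is mostly bookkeeping rather than a conceptual difficulty: one has to confirm that the single partition $1^m$ in the Bell polynomial $B_m$ is indeed the one producing the top-degree term (no cancellations), and one has to reconcile the $(-\mu(0))^j$ arising from the binomial expansion with the $\mu(0)^j$ appearing in the stated formula (either as a typo, or absorbed by a sign in $Q_a$).
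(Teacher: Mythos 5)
Your proof is correct and takes essentially the same route as the paper: the paper likewise writes $F_{\mu}^{(m)}(z)=P_m(z)F_{\mu}(z)$ with $\deg P_m=m(p-1)$ and leading term that of $(P'(z))^m$ (your Bell-polynomial formulation just makes this explicit), expands $(F_{\mu}(z)-\mu(0))^{\phi_0(a)}$ binomially, and uses $\sum_{u\in I(a)}c_u(a)=k-1$ to get the degree and leading coefficient of $Q_a$. Your side remark about the sign is also on target: the binomial expansion genuinely gives $(-\mu(0))^j$, so the $\mu(0)^j$ in the statement is a sign slip that cannot be absorbed into the common factor $Q_a$, but it is harmless later on since only the $j=0$ term survives in the asymptotics used in Proposition \ref{proppois}.
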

\begin{proof}
 First, as $F_{\mu}$ satisfies \eqref{hp}, the $j^{th}$-derivative of $F^{j}_{\mu}$ can be written as $F^{j}_{\mu}(z) = P_j(z)F_{\mu}(z)$ with $P_j$ a polynomial function with degree $j(p-1)$. So, for all $a \in \mathbb{T}_k$, we have \[\prod_{u\in I(a)} F_{\mu}^{(c_u(a))}(z) = F_{\mu}^{k-\phi_0(a)}(z)\prod_{u\in I(a)} P_{c_u(a)}(z), \] \[\prod_{u\notin I(a)} \widetilde{F}_{\mu}(z) = z^{-\phi_0(a)}\bigl(F_{\mu}(z)-\mu(0) \bigr)^{\phi_0(a)} = z^{-\phi_0(a)}\sum_{j=0}^{\phi_0(a)}\binom{\phi_0(a)}{j}\mu(0)^jF^{\phi_0(a)-j}_{\mu}(z),\]
 thus by setting $Q_a(z) =  z^{-\phi_0(a)}\prod_{u\in I(a)} P_{c_u(a)}(z) \coloneqq z^{-\phi_0(a)} \sum_{i=0}^{d_a} q_{i}^a z^i $ we deduce that \[G^a_{\mu}(a) = Q_a(z)\sum_{j=0}^{\phi_0(a)}\binom{\phi_0(a)}{j}\mu(0)^jF^{\phi_0(a)-j}_{\mu}(z), \]
 so we get the first point. 
 
 Then, as $\deg(P_j)=j(p-1)$, we have that $d_a=\sum_{u \in I(a)} c_u(a)(p-1) = (k-1)(p-1)$ so $Q_a(z)$ has degree $(k-1)(p-1)-\phi_0(a)$. Moreover, the dominant term of $F^{j}_{\mu}$ is equal to the dominant term of $(P'(z))^j$ which is equal to $(pa_pz^{p-1})^j$, consequently $q_{d_a}^a=\prod_{u=1}^k(pa_p)^{c_u(a)} = (pa_p)^{k-1}.$
\end{proof}
For all $m\in \mathbb{Z}_{\geq0}$, we define $e^{m}_n\coloneqq [z^n]F^m_{\mu}(z)$. Applying the first part of Lemma \ref{lempolyetF} we deduce the following equality \begin{equation}\label{formule_coef_G_a}
    [z^{n-k-\phi_0(a)}]G^a_{\mu}(z)= \sum_{j=0}^{\phi_0(a)} \sum_{i=0}^{d_a-\phi_0(a)}\binom{\phi_0(a)}{j}\mu(0)^jq_{i+\phi_0(a)}^ae^{k-j}_{n-k-\phi_0(a)-i}. 
\end{equation}
The following lemma essentially tells that the coefficents in $F^k_{\mu}$ decay fairly quickly, which is typically observed in cases where the generating function has an infinite radius of convergence.

\begin{lemma}\label{cvg_e_n}
    Fix $m \in [\![1,k]\!]$. Then, \[\frac{e^{m}_{n+1}}{e^{m}_{n}} \underset{n \to \infty}{\sim} (mpa_p)^{1/p}n^{-1/p}.\]
\end{lemma}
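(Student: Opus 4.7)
The plan is to apply the saddle-point method to the entire function $F_\mu^m(z) = c^m \exp(mP(z))$, or equivalently to invoke Hayman admissibility. Since $P$ is a polynomial with positive leading coefficient $a_p$ and $\gcd\{j : a_j \neq 0\} = 1$, the same holds for $mP$, so $F_\mu^m = c^m\exp(mP)$ is Hayman-admissible (see e.g.\ Theorem VIII.4 and Section VIII.5 of \cite{F13}, where it is shown that $\exp(Q)$ is H-admissible for such polynomials $Q$).

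The saddle point $r_n > 0$ of the integrand in Cauchy's formula for $e^m_n$ is the positive solution of $r_n \, mP'(r_n) = n$. Since $P'(r) = p a_p r^{p-1}(1 + O(1/r))$ as $r \to \infty$, inverting this equation gives
\[r_n = \Bigl(\frac{n}{mpa_p}\Bigr)^{1/p}\bigl(1+O(n^{-1/p})\bigr).\]
By Hayman's coefficient theorem applied to $F_\mu^m$,
\[e^m_n \underset{n\to\infty}{\sim} \frac{c^m \exp(mP(r_n))}{r_n^n \sqrt{2\pi\, b(r_n)}}, \qquad \text{with } b(r) = r\,mP'(r) + r^2\, mP''(r) \sim mp^2 a_p\, r^p.\]

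Introducing $h_n(z) := mP(z) - n\log z$, the envelope identity (obtained by Taylor-expanding $h_{n+1}$ around $r_n$ and using $h_n'(r_n)=0$, with $r_{n+1}-r_n = O(r_n/n)$) gives
\[h_{n+1}(r_{n+1}) - h_n(r_n) = -\log r_n + O(1/n).\]
Combined with $b(r_n)\sim pn$, which makes the ratio of prefactors $\sqrt{b(r_n)/b(r_{n+1})}$ tend to $1$, this yields
\[\frac{e^m_{n+1}}{e^m_n}\; \sim\; \frac{1}{r_n}\; \sim\; (mpa_p)^{1/p}\, n^{-1/p},\]
which is the desired asymptotic.

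The only delicate step is the envelope computation, but it is standard. As a sanity check, in the case $p=1$ one has $F_\mu^m(z) = c^m e^{ma_1 z}$, so $e^m_n = c^m (ma_1)^n/n!$ and $e^m_{n+1}/e^m_n = ma_1/(n+1)$, matching the formula. An alternative, more elementary route uses the ODE $(F_\mu^m)'(z) = m P'(z) F_\mu^m(z)$, which gives the recurrence $(n+1)\,e^m_{n+1} = m \sum_{i=1}^{p} i a_i\, e^m_{n+1-i}$; extracting the dominant term $mpa_p e^m_{n+1-p}$ and setting $u_n := e^m_{n+1}/e^m_n$ leads formally to $C^p = mpa_p$ for the conjectured limit constant $C$ of $n^{1/p} u_n$, but promoting this heuristic to a rigorous proof requires a two-sided bootstrap bound on $u_n$, which is why the Hayman/saddle-point route is preferred here.
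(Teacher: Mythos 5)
Your proof is correct, and while it starts from the same place as the paper's argument (saddle-point coefficient asymptotics for $F_\mu^m=c^m\exp(mP)$ — the paper invokes Corollary VIII.2 of \cite{F13}, you invoke Hayman admissibility, which is the same estimate $e^m_n\sim c^m\exp(mP(r_n))/(r_n^n\sqrt{2\pi b(r_n)})$ with $r_n\,mP'(r_n)=n$), the way you handle the ratio $e^m_{n+1}/e^m_n$ is genuinely different and arguably cleaner. The paper controls the two factors $r_n^n/r_{n+1}^{n+1}$ and $\exp(m(P(r_{n+1})-P(r_n)))$ separately, which forces it to bootstrap a full $p$-term asymptotic expansion of $r_n$ (Equation \eqref{devasympr_n}) down to $o(n^{-(p-1)/p})$, since each factor individually is sensitive to low-order corrections in $r_n$ that only cancel when the two are multiplied. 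Your envelope argument packages both factors into $\exp\bigl(h_{n+1}(r_{n+1})-h_n(r_n)\bigr)$ with $h_n(z)=mP(z)-n\log z$ and exploits the stationarity $h_n'(r_n)=0$: expanding $h_{n+1}$ at $r_n$ gives $h_{n+1}(r_{n+1})-h_n(r_n)=-\log r_n-(r_{n+1}-r_n)/r_n+O\bigl(h_{n+1}''(\xi)(r_{n+1}-r_n)^2\bigr)=-\log r_n+O(1/n)$, so only the crude bounds $r_{n+1}-r_n=O(r_n/n)$ and $h_{n+1}''(\xi)=O(n^{1-2/p})$ are needed, and no refined expansion of $r_n$ at all. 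I checked these estimates and the claim $b(r_n)\sim pn$ (hence $\sqrt{b(r_n)/b(r_{n+1})}\to1$); they hold. The one step you assert without proof is $r_{n+1}-r_n=O(r_n/n)$; it follows in one line from the mean value theorem applied to the increasing function $a(r)=r\,mP'(r)$, whose derivative satisfies $a'(r)\asymp r^{p-1}$ on $[r_n,r_{n+1}]$, giving $r_{n+1}-r_n=1/a'(\xi)\asymp r_n^{1-p}\asymp r_n/n$ — worth including, but it is not a gap. The concluding remark about the recurrence $(n+1)e^m_{n+1}=m\sum_i ia_ie^m_{n+1-i}$ is correctly flagged as heuristic and is not relied upon. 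In short: same analytic input, but your stationarity trick trades the paper's $p$-term bootstrap for a two-line Taylor estimate, which is a genuine simplification.
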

\begin{proof}
    Fix $m\in [\![1,k]\!] $. By applying Corollary VIII.2 of \cite{F13} to $F_{\mu}^{m}(z) = c^{m} \exp(mP(z))$, we get that \begin{equation*}
        e^{m}_n \underset{n \to \infty}{\sim} \frac{c^{m} }{\sqrt{2\pi\lambda_n}}\frac{\exp{(mP(r_n))}}{r_n^n},
    \end{equation*}
    where $r_n$ and $\lambda_n$ satisfy $r_nP'(r_n)=n/m$ and $\lambda_n = m(n + r_n^2P^{(2)}(r_n)).$
    Thus, we have the following estimate 
    \begin{equation} \label{eqpois}
        \frac{ e^{m}_{n+1}}{e^{m}_n} \underset{n \to \infty}{\sim} \sqrt{\frac{\lambda_n}{\lambda_{n+1}}} \frac{r_n^n}{r_{n+1}^{n+1}}\exp{\bigl(m(P(r_{n+1})-P({r_n}))\bigr)}.
    \end{equation}
     In addition, we have that $r_nP'(r_n) \underset{n \to \infty}{\sim} r_npa_pr_n^{p-1}$ so we deduce that $r_n \underset{n \to \infty}{\sim} (n/mpa_p)^{1/p}$ and $\lambda_n \underset{n \to \infty}{\sim} mpn$. Therefore, \begin{equation} \label{behaviorlambdarn}
       \lambda_{n}\underset{n \to \infty}{\sim}\lambda_{n+1} \quad \text{and} \quad  r_n \underset{n \to \infty}{\sim} r_{n+1}.
    \end{equation} To study the limit behavior of $e^{m}_{n+1}/e^{m}_n$, Equation \eqref{behaviorlambdarn} provides sufficient information for $\lambda_{n}$, but a more refined asymptotic expansion is required for $r_n$. 
    By setting $b_i \coloneqq(i+1)a_{i+1}$, we have
    \[r_n = \frac{n}{mP'(r_n)}= \frac{n}{m} \biggl( \sum_{i=1}^{p-1} b_i r^i_n \biggr)^{-1}=\frac{n}{mb_{p-1}r^{p-1}_{n}}\biggl( 1+ \sum_{i=1}^{p-2} \frac{b_i}{b_{p-1}} r^{i-(p-1)}_n \biggr)^{-1},\]
    so \[r^p_n = \frac{n}{mb_{p-1}} \Bigl( 1+ \frac{b_{p-2}}{b_{p-1}} \frac{1}{r_n} + o\Bigl(\frac{1}{r_n} \Bigr)\Bigr)^{-1},\]
    and by doing an asymptotic expansion and reinjecting the equivalent $r_n \underset{n \to \infty}{\sim} (n/mpa_p)^{1/p}$, we get that \[ r_n = \frac{1}{(mpa_p)^{1/p}}n^{1/p} - \frac{m^{1/p}(p-1)a_{p-1}}{p^2a_p} +o( 1).\]
    In fact, one can show by iterating this bootstrapping argument, that  $r_n$ admits an asymptotic expansion of the following form 
    \begin{equation}\label{devasympr_n}
        r_n = c_{-1}n^{1/p} + c_0 + \frac{c_1}{n^{1/p}} + \cdots + \frac{c_{p-1}}{n^{(p-1)/p}} +o\Bigl( \frac{1}{n^{(p-1)/p}}\Bigr),
    \end{equation} 
where $c_{-1} = (mpa_p)^{-1/p}$, $c_0 =m^{1/p}(p-1)a_{p-1}/p^2a_p$ and $c_1,\ldots,c_{p-1}$ are real numbers. 
Consequently, using Equation \eqref{devasympr_n}, one can verify that $r_{n+1}^{n+1}/r_n^n \underset{n \to \infty}{\sim} (mpa_p)^{1/p}e^{-1/p} n^{-1/p}$ and $\exp{(m(P(r_{n+1})-P({r_n})))} \underset{n \to \infty}{\sim} e$. The lemma follows by injecting these estimates in Equation \eqref{eqpois} .
\end{proof}
We may now proceed to the proof of the Proposition \ref{proppois}.
\begin{proof}[\textit{Proof of Proposition }\ref{proppois}]
    Set  $a,b \in \mathbb{T}_k $. By Equation \eqref{formule_coef_G_a} and then by applying Lemma \ref{cvg_e_n}, we have \[[z^{n-k-\phi_0(a)}]G^a_{\mu}(z)= \sum_{j=0}^{\phi_0(a)}\sum_{i=0}^{d_a-\phi_0(a)}\binom{\phi_0(a)}{j}\mu(0)^jq_{i+\phi_0(a)}^ae^{k-j}_{n-k-\phi_0(a)-i}   \underset{n \to \infty}{\sim} \sum_{j=0}^{\phi_0(a)}\binom{\phi_0(a)}{j}\mu(0)^jq_{d_a}^ae^{k-j}_{n-k-d_a}. \]
Fix $m\in [\![1,k]\!] $. By \eqref{eqpois}, we have \[\frac{e^{m}_{n-k-d_a}}{e^{m+1}_{n-k-d_a}} = c \exp{\bigl(mP(r_{n-k-d_a})- (m+1)P(r_{n-k-d_a})\bigr)} = c \exp{\bigl(-P(r_{n-k-d_a})\bigr)} \xrightarrow[n \rightarrow \infty]{} 0. \]
Therefore, we deduce that
\[[z^{n-k-\phi_0(a)}]G^a_{\mu}(z)\underset{n \to \infty}{\sim} q_{d_a}^ae^{k}_{n-k-d_a}\]
As $ q_{d_a}^ae_{n-k-d_a} =  q_{d_b}^be_{n-k-d_b}$ by the second part of Lemma \ref{lempolyetF}, the result follows.
\end{proof}

We now proceed with the proof of the second statement in Theorem \ref{thPois-intro}.

\begin{proof}[Proof of Theorem \ref{thPois-intro}.2]
The $X^n_i$'s are i.i.d. and satisfies \eqref{hp} so the generating function of $X^n_2+ \cdots +X^n_k$ is equal to \[F_{X^n_2+ \cdots +X^n_k}(z)=F^{k-1}_{\mu}(z)=c^{k-1}\exp{\bigl((k-1)P(z)\bigr)}.\]
Let $\epsilon >0$ and define $n_x\coloneqq (n-1)(1/k+x)$ for $x \in \mathbb{R}$. Recalling that for all $m\in \mathbb{Z}_{\geq0}$, $e^{m}_n= [z^n]F^m_{\mu}(z)$, we have \[\mathbb{P}\Bigl( \Big|\frac{X^n_1 }{n-1}-\frac{1}{k} \Big| \geq \epsilon \,\Big| \,X^n_1+ \cdots +X^n_k=n-1\Bigr) = \frac{\sum_{|i-(n-1)/k|\geq \epsilon (n-1) }  e^{1}_i e^{k-1}_{n-1-i}}{\sum_{\substack{0\leq i,j \leq n-1 \\ i+j = n-1}} e^{1}_i e^{k-1}_j } = \frac{\sum_{|i-(n-1)/k|\geq \epsilon (n-1) } u^{i,k}_n}{\sum_{\substack{0\leq i,j \leq n-1 \\ i+j = n-1}} u^{i,k}_n },\] where we define $u^{i,k}_n \coloneqq e^{1}_i e^{k-1}_{n-1-i}$.
We also define $ A^{\epsilon,k,l}_n, A^{\epsilon,k,r}_n$ and $B^{\epsilon,k,l}_n,B^{\epsilon,k,r}_n$ by:
\[A^{\epsilon,k,l}_n = \sum_{i \leq n_{-\epsilon} } u^{i,k}_n, \quad  A^{\epsilon,k,r}_n  = \sum_{i \geq n_{\epsilon} } u^{i,k}_n, \quad B^{\epsilon,k,l}_n  = \sum_{n_{-\epsilon}<i <(n-1)/k } u^{i,k}_n \quad \text{and} \quad B^{\epsilon,k,r}_n= \sum_{(n-1)/k<i <n_{\epsilon}} u^{i,k}_n.\]
In particular, we have that 
\begin{equation}\label{ineqAB}
    \mathbb{P}\Bigl( \Big|\frac{X^n_1 }{n-1}-\frac{1}{k} \Big| \geq \epsilon \,\Big| \,X^n_1+ \cdots +X^n_k=n-1\Bigr) \leq \frac{A^{\epsilon,k,l}_n}{B^{\epsilon,k,l}_n} + \frac{A^{\epsilon,k,r}_n}{B^{\epsilon,k,r}_n},
\end{equation}
so we now aim to prove that $A^{\epsilon,k,l}_n/B^{\epsilon,k,l}_n$ and $A^{\epsilon,k,r}_n/B^{\epsilon,k,r}_n$ tends to zero when $n$ tends to infinity to conclude the proof. Fix $\eta >0.$
By Lemma \ref{cvg_e_n}, we have for all $m \in \mathbb{Z}_{>0}$:
\begin{equation}\label{eq_enm}
   \frac{e^{m}_{n+1}}{e^{m}_{n}} =(mpa_p)^{1/p}n^{-1/p}\bigl(1+\delta(m,n) \bigr), 
\end{equation}
where $\delta(m,n)/n  \underset{n \to \infty}{\rightarrow} 0 $. Therefore, for each $m$, we choose  $N_m$ such that for all $n\geq N_m$, $|\delta(m,n)| \leq \eta $. Additionally, let $N'_m$ be an integer such that $(e^m_n)_{n\geq N'_m}$ is a nonincreasing sequence. We first show that $A^{\epsilon,k,r}_n/B^{\epsilon,k,r}_n  \underset{n \to \infty}{\rightarrow}0.$ Let $n-1\geq \max{(kN_1,N'_1+N_{k-1})}$. We define $n_\epsilon\coloneqq (n-1)(1/k+\epsilon)$. Then, for every $i \in [\![n_{\epsilon/2},n-1-N_{k-1}]\!]$, we have by Equation \eqref{eq_enm}
\[\frac{u^{i+1,k}_n}{u^{i,k}_n}= \frac{e^{1}_{i+1}e^{k-1}_{n-2-i}}{e^{1}_{i}e^{k-1}_{n-1-i}} \leq \biggl(\frac{n-2-i}{(k-1)i}\biggr)^{1/p} \frac{1+\eta}{1-\eta}.\]
As $x\mapsto (n-2-x/(k-1)x)^{1/p}$ is a decreasing function, we deduce that
\[\frac{u^{i+1,k}_n}{u^{i,k}_n}\leq \biggl(\frac{n-2-n_{\epsilon/2}}{(k-1)n_{\epsilon/2}}\biggr)^{1/p} \frac{1+\eta}{1-\eta},\]
where $0<(n-2-n_{\epsilon/2}/(k-1)n_{\epsilon/2})^{1/p}<1$ so that for sufficiently small $\eta$, there exists $\alpha_\epsilon <1$ such that 
\begin{equation}\label{ineg_u_n}
    \frac{u^{i+1,k}_n}{u^{i,k}_n}\leq \alpha_\epsilon.
\end{equation}
Now, for the remaining $N_{k-1}$ values of $i \in [\![n-N_{k-1},n-1]\!]$, i.e.  $N_{k-1}$ values of $j\coloneqq n-1-i \in [\![0,N_{k-1}-1]\!]$,  we notice that
\[\frac{u_n^{i,k}}{u_n^{n-1-N_{k-1},k}}= \frac{e^{1}_{i}e^{k-1}_{n-1-i}}{e^{1}_{n-1-N_{k-1}}e^{k-1}_{N_{k-1}}}= \frac{e^{1}_{n-1-j}e^{k-1}_{j}}{e^{1}_{n-1-N_{k-1}}e^{k-1}_{N_{k-1}}}\]
where $e^{k-1}_{j}/e^{k-1}_{N_{k-1}} \leq \max{\{e^{k-1}_{j}/e^{k-1}_{N_{k-1}} : 0 \leq j \leq N_{k-1}-1 \} } \coloneqq c_k$ and $e^{1}_{n-1-j}/e^{1}_{n-1-N_{k-1}} \leq 1$. Therefore, for every $i \in [\![n-N_{k-1},n-1]\!]$, $u_n^{i,k} \leq c_ku_n^{n-1-N_{k-1},k}\leq c_k \alpha_\epsilon^{n-1-N_{k-1}-n_\epsilon} u_n^{n_\epsilon,k} \leq c_k u_n^{n_\epsilon,k}$, where the second inequality comes from Equation \eqref{ineg_u_n}. 
Hence, applying \eqref{ineg_u_n} for $i \in [\![n_{\epsilon},n-1-N_{k-1}]\!]$, we get that \[A^{\epsilon,k,r}_n = \sum_{n_\epsilon\leq i\leq n-1 } u^{i,k}_n \leq  \sum_{n_\epsilon \leq i\leq n-1-N_{k-1} } u^{n_\epsilon,k}_n \alpha_\epsilon^{i-n_\epsilon} +  \sum_{n-N_{k-1}\leq i\leq n-1}u^{i,k}_n \leq \frac{u^{n_\epsilon,k}_n}{1-\alpha_\epsilon} + c_kN_{k-1}u_n^{n_\epsilon,k}\]
and
\[B^{\epsilon,k,r}_n = \sum_{(n-1)/k<i <n_\epsilon} u^{i,k}_n \geq \sum_{n_{\epsilon/2}<i <n_\epsilon} u^{i,k}_n \geq \frac{\epsilon}{2}(n-1)u^{n_\epsilon,k}_n. \]
The last inequality follows from the fact that, for $i\geq n_{\epsilon/2},$ $u^{i,k}_n$ is nonincreasing in $i$ as stated in Equation \eqref{ineg_u_n}. 
Consequently, \[\frac{A^{\epsilon,k,r}_n}{B^{\epsilon,k,r}_n} \leq \frac{2}{(1-\alpha_\epsilon)\epsilon (n-1) } + \frac{2c_kN_{k-1}}{\epsilon (n-1)} \xrightarrow[n \rightarrow \infty]{} 0.\]
Applying the same method, we obtain that 
$A^{\epsilon,k,l}_n/B^{\epsilon,k,l}_n  \underset{n \to \infty}{\rightarrow}0$, and thus the result follows by inequality \eqref{ineqAB}.
\end{proof}
\bibliographystyle{alpha}
\bibliography{sample}

@article{A91,
 author = {Aldous, David},
 title = {The continuum random tree. {I}},
 fjournal = {The Annals of Probability},
 journal = {Ann. Probab.},
 issn = {0091-1798},
 volume = {19},
 number = {1},
 pages = {1--28},
 year = {1991},
 language = {English},
 doi = {10.1214/aop/1176990534},
 keywords = {60C05,05C80},
 zbMATH = {4190800},
 Zbl = {0722.60013}
}

@article{A92,
  title={{The continuum Random Tree. II} An overview},
  author={Aldous, David},
  journal={Stochastic analysis (Durham, 1990)},
  volume={167},
  pages={23--70},
  year={1991},
  publisher={Citeseer}
}

@article{A93,
 author = {Aldous, David},
 title = {The continuum random tree. {III}},
 fjournal = {The Annals of Probability},
 journal = {Ann. Probab.},
 issn = {0091-1798},
 volume = {21},
 number = {1},
 pages = {248--289},
 year = {1993},
 language = {English},
 doi = {10.1214/aop/1176989404},
 keywords = {60C05,60B10,60J80,05C05,05C80},
 zbMATH = {222709},
 Zbl = {0791.60009}
}

@article{LG05,
 author = {Le Gall, Jean-Fran{\c{c}}ois},
 title = {Random trees and applications},
 fjournal = {Probability Surveys},
 journal = {Probab. Surv.},
 issn = {1549-5787},
 volume = {2},
 pages = {245--311},
 year = {2005},
 language = {English},
 doi = {10.1214/154957805100000140},
 keywords = {60J80,05C05,35J65,60C05,60J65,60-02},
 url = {https://eudml.org/doc/229216},
 zbMATH = {5728580},
 Zbl = {1189.60161}
}

@article{LG10,
 author = {Le Gall, Jean-Fran{\c{c}}ois},
 title = {It{\^o}'s excursion theory and random trees},
 fjournal = {Stochastic Processes and their Applications},
 journal = {Stochastic Processes Appl.},
 issn = {0304-4149},
 volume = {120},
 number = {5},
 pages = {721--749},
 year = {2010},
 language = {English},
 doi = {10.1016/j.spa.2010.01.015},
 keywords = {60J65,60J80,60C05},
 zbMATH = {5710802},
 Zbl = {1191.60093}
}

@article{D03,
 author = {Duquesne, Thomas},
 title = {A limit theorem for the contour process of conditioned {Galton}-{Watson} trees},
 fjournal = {The Annals of Probability},
 journal = {Ann. Probab.},
 issn = {0091-1798},
 volume = {31},
 number = {2},
 pages = {996--1027},
 year = {2003},
 language = {English},
 doi = {10.1214/aop/1048516543},
 keywords = {60F17,60J80,60G52,60G17},
 zbMATH = {1933038},
 Zbl = {1025.60017}
}

@article{R15,
 author = {Rizzolo, Douglas},
 title = {Scaling limits of {Markov} branching trees and {Galton}-{Watson} trees conditioned on the number of vertices with out-degree in a given set},
 fjournal = {Annales de l'Institut Henri Poincar{\'e}. Probabilit{\'e}s et Statistiques},
 journal = {Ann. Inst. Henri Poincar{\'e}, Probab. Stat.},
 issn = {0246-0203},
 volume = {51},
 number = {2},
 pages = {512--532},
 year = {2015},
 language = {English},
 doi = {10.1214/13-AIHP594},
 keywords = {60J80,60F17},
 zbMATH = {6441327},
 Zbl = {1319.60170}
}

@article{H17,
 author = {He, Xin},
 title = {Conditioning {Galton}-{Watson} trees on large maximal outdegree},
 fjournal = {Journal of Theoretical Probability},
 journal = {J. Theor. Probab.},
 issn = {0894-9840},
 volume = {30},
 number = {3},
 pages = {842--851},
 year = {2017},
 language = {English},
 doi = {10.1007/s10959-016-0664-x},
 keywords = {60J80,60B10},
 zbMATH = {6796807},
 Zbl = {1386.60292}
}

@article{ABD20,
 author = {Abraham, Romain and Bouaziz, Aymen and Delmas, Jean-Fran{\c{c}}ois},
 title = {Very fat geometric {Galton}-{Watson} trees},
 fjournal = {European Series in Applied and Industrial Mathematics (ESAIM): Probability and Statistics},
 journal = {ESAIM, Probab. Stat.},
 issn = {1292-8100},
 volume = {24},
 pages = {294--314},
 year = {2020},
 language = {English},
 doi = {10.1051/ps/2019026},
 keywords = {60J80,60F15,05C05},
 zbMATH = {7277656},
 Zbl = {1455.60112}
}

@article{Kargin23,
 author = {Kargin, Vladislav},
 title = {Scaling limits of slim and fat trees},
 fjournal = {Journal of Theoretical Probability},
 journal = {J. Theor. Probab.},
 issn = {0894-9840},
 volume = {36},
 number = {4},
 pages = {2192--2228},
 year = {2023},
 language = {English},
 doi = {10.1007/s10959-023-01261-w},
 keywords = {05C05,05C80,60C05,60J80},
 zbMATH = {7768803},
 Zbl = {1527.05037}
}

@incollection{K13,
 author = {Kortchemski, Igor},
 title = {A simple proof of {Duquesne}'s theorem on contour processes of conditioned {Galton}-{Watson} trees},
 booktitle = {S\'eminaire de probabilit\'es XLV},
 isbn = {978-3-319-00320-7; 978-3-319-00321-4},
 pages = {537--558},
 year = {2013},
 publisher = {Cham: Springer},
 language = {English},
 doi = {10.1007/978-3-319-00321-4_20},
 keywords = {60J80,60F15,60G52,60C05},
 zbMATH = {6234287},
 Zbl = {1286.60087}
}

@article{Kort13,
 author = {Kortchemski, Igor},
 title = {Invariance principles for {Galton}-{Watson} trees conditioned on the number of leaves},
 fjournal = {Stochastic Processes and their Applications},
 journal = {Stochastic Processes Appl.},
 issn = {0304-4149},
 volume = {122},
 number = {9},
 pages = {3126--3172},
 year = {2012},
 language = {English},
 doi = {10.1016/j.spa.2012.05.013},
 keywords = {60J80,60F17,05C05},
 zbMATH = {6065645},
 Zbl = {1259.60103}
}

@article{K15,
 author = {Kortchemski, Igor},
 title = {Limit theorems for conditioned non-generic {Galton}-{Watson} trees},
 fjournal = {Annales de l'Institut Henri Poincar{\'e}. Probabilit{\'e}s et Statistiques},
 journal = {Ann. Inst. Henri Poincar{\'e}, Probab. Stat.},
 issn = {0246-0203},
 volume = {51},
 number = {2},
 pages = {489--511},
 year = {2015},
 language = {English},
 doi = {10.1214/13-AIHP580},
 keywords = {60J80,60F17,05C80,05C05},
 zbMATH = {6441326},
 Zbl = {1315.60091}
}

@article{KM23,
 author = {Kortchemski, Igor and Marzouk, Cyril},
 title = {Large deviation local limit theorems and limits of biconditioned planar maps},
 fjournal = {The Annals of Applied Probability},
 journal = {Ann. Appl. Probab.},
 issn = {1050-5164},
 volume = {33},
 number = {5},
 pages = {3755--3802},
 year = {2023},
 language = {English},
 doi = {10.1214/22-AAP1906},
 keywords = {60F10,60F05,60F17,05C80,60D05,05C05,60G50},
 url = {projecteuclid.org/journals/annals-of-applied-probability/volume-33/issue-5/Large-deviation-local-limit-theorems-and-limits-of-biconditioned-planar/10.1214/22-AAP1906.full},
 zbMATH = {7787923},
 Zbl = {1530.60033}
}

@book{BGT97,
 author = {Bingham, N. H. and Goldie, C. M. and Teugels, Jozef L.},
 title = {Regular variation},
 fseries = {Encyclopedia of Mathematics and Its Applications},
 series = {Encycl. Math. Appl.},
 issn = {0953-4806},
 volume = {27},
 isbn = {0-521-30787-2},
 year = {1987},
 publisher = {Cambridge University Press, Cambridge},
 language = {English},
 keywords = {26A12,26A48,26-02,11K65,11N60,30-02,40E05,60-02,60Fxx},
 zbMATH = {4000257},
 Zbl = {0617.26001}
}

@article{FO90,
 author = {Flajolet, Philippe and Odlyzko, Andrew},
 title = {Singularity analysis of generating functions},
 fjournal = {SIAM Journal on Discrete Mathematics},
 journal = {SIAM J. Discrete Math.},
 issn = {0895-4801},
 volume = {3},
 number = {2},
 pages = {216--240},
 year = {1990},
 language = {English},
 doi = {10.1137/0403019},
 keywords = {05A15,40E05},
 zbMATH = {4170909},
 Zbl = {0712.05004}
}

@article{Lamp,
 author = {Lamperti, John},
 title = {An occupation time theorem for a class of stochastic processes},
 fjournal = {Transactions of the American Mathematical Society},
 journal = {Trans. Am. Math. Soc.},
 issn = {0002-9947},
 volume = {88},
 pages = {380--387},
 year = {1958},
 language = {English},
 doi = {10.2307/1993222},
 keywords = {60K30,60G50},
 zbMATH = {3362797},
 Zbl = {0228.60046}
}

@article{AL11,
 author = {Armend{\'a}riz, In{\'e}s and Loulakis, Michail},
 title = {Conditional distribution of heavy tailed random variables on large deviations of their sum},
 fjournal = {Stochastic Processes and their Applications},
 journal = {Stochastic Processes Appl.},
 issn = {0304-4149},
 volume = {121},
 number = {5},
 pages = {1138--1147},
 year = {2011},
 language = {English},
 doi = {10.1016/j.spa.2011.01.011},
 keywords = {60F10,60G50},
 zbMATH = {5908027},
 Zbl = {1218.60021}
}

@book{F13,
 author = {Flajolet, Philippe and Sedgewick, Robert},
 title = {Analytic combinatorics},
 isbn = {978-0-521-89806-5},
 year = {2009},
 publisher = {Cambridge: Cambridge University Press},
 language = {English},
 keywords = {05-02,05A15,05A16,60C05,60E10},
 zbMATH = {5485323},
 Zbl = {1165.05001}
}

@article{JJ12,
 author = {Janson, Svante},
 title = {Simply generated trees, conditioned {Galton}-{Watson} trees, random allocations and condensation},
 fjournal = {Probability Surveys},
 journal = {Probab. Surv.},
 issn = {1549-5787},
 volume = {9},
 pages = {103--252},
 year = {2012},
 language = {English},
 doi = {10.1214/11-PS188},
 keywords = {60C05,05C05,60F05,60J80},
 zbMATH = {6050912},
 Zbl = {1244.60013}
}

@book{PC06,
 author = {Pitman, Jim},
 title = {Combinatorial stochastic processes. {Ecole} d'{Et{\'e}} de {Probabilit{\'e}s} de {Saint}-{Flour} {XXXII} -- 2002.},
 fseries = {Lecture Notes in Mathematics},
 series = {Lect. Notes Math.},
 issn = {0075-8434},
 volume = {1875},
 isbn = {3-540-30990-X},
 year = {2006},
 publisher = {Berlin: Springer},
 language = {English},
 doi = {10.1007/b11601500},
 keywords = {60-02,05Axx,60C05,60J65,60G09,60J80},
 zbMATH = {5029516},
 Zbl = {1103.60004}
}

@article{F07condensation,
 author = {Ferrari, Pablo A. and Landim, Claudio and Sisko, Valentin V.},
 title = {Condensation for a fixed number of independent random variables},
 fjournal = {Journal of Statistical Physics},
 journal = {J. Stat. Phys.},
 issn = {0022-4715},
 volume = {128},
 number = {5},
 pages = {1153--1158},
 year = {2007},
 language = {English},
 doi = {10.1007/s10955-007-9356-3},
 keywords = {82B21},
 zbMATH = {5204121},
 Zbl = {1206.82027}
}

@article{LM07,
 author = {Labarbe, Jean-Maxime and Marckert, Jean-Francois},
 title = {Asymptotics of {Bernoulli} random walks, bridges, excursions and meanders with a given number of peaks},
 fjournal = {Electronic Journal of Probability},
 journal = {Electron. J. Probab.},
 issn = {1083-6489},
 volume = {12},
 pages = {229--261},
 year = {2007},
 language = {English},
 doi = {10.1214/EJP.v12-397},
 keywords = {60G50,60J65,60B10},
 url = {https://eudml.org/doc/128284},
 zbMATH = {5214031},
 Zbl = {1128.60035}
}

@article{T20,
 author = {Th{\'e}venin, Paul},
 title = {Vertices with fixed outdegrees in large {Galton}-{Watson} trees},
 fjournal = {Electronic Journal of Probability},
 journal = {Electron. J. Probab.},
 issn = {1083-6489},
 volume = {25},
 pages = {25},
 note = {Id/No 64},
 year = {2020},
 language = {English},
 doi = {10.1214/20-EJP465},
 keywords = {05C80,05C81,05D40,05C05},
 zbMATH = {7225518},
 Zbl = {1447.05183}
}

\end{document}